\documentclass[a4paper,11pt]{article}

\usepackage[margin=22mm]{geometry}
\usepackage{amsmath,amssymb,amsthm,mathtools,bm}
\usepackage{booktabs,threeparttable}
\usepackage{array,tabularx}
\usepackage{graphicx,xcolor}
\usepackage{enumitem}
\usepackage{algorithm}
\usepackage{placeins}
\usepackage{float}
\usepackage{algpseudocode}
\usepackage[numbers,sort&compress]{natbib}
\usepackage[bookmarks=true,bookmarksnumbered=true,colorlinks=true,
  linkcolor=black,citecolor=black,urlcolor=black]{hyperref}
\providecommand{\doi}[1]{\url{https://doi.org/#1}}

\newcommand{\Ssym}{\mathbb S}
\newcommand{\R}{\mathbb R}
\newcommand{\A}{\mathcal A}
\newcommand{\Ast}{\mathcal A^*}
\newcommand{\Rp}{R_{\mathrm p}}
\newcommand{\Rd}{R_{\mathrm d}}
\newcommand{\Rg}{R_{\mathrm g}}
\newcommand{\Nlong}{\mathcal N_{\mathrm{long}}}

\newcommand{\norm}[1]{\lVert #1\rVert}
\newcommand{\OFAS}{\textnormal{\textsc{OFAS}}}
\newcommand{\CAOFAS}{\textnormal{\textsc{CA-OFAS}}}
\newcommand{\Hres}{\Psi}

\theoremstyle{plain}
\newtheorem{theorem}{Theorem}[section]
\newtheorem{proposition}[theorem]{Proposition}
\newtheorem{lemma}[theorem]{Lemma}
\newtheorem{corollary}[theorem]{Corollary}
\theoremstyle{definition}
\newtheorem{assumption}[theorem]{Assumption}

\theoremstyle{remark}
\newtheorem{remark}[theorem]{Remark}

\algtext*{EndIf}
\algtext*{EndFor}
\algtext*{EndWhile}
\title{A One-Factorization Predictor--Corrector Long-Step Arc-Search Method and a Curvature-Amplified Variant for Semidefinite Programming with a Homogeneous Self-Dual Embedding}
\author{Makoto Yamashita\thanks{Corresponding author. Department of Mathematical and Computing Science, Institute of Science Tokyo, 2-12-1-W8-29 Ookayama, Meguro-ku, Tokyo 152-8550, Japan. E-mail: \texttt{Makoto.Yamashita@comp.isct.ac.jp}. ORCID: \href{https://orcid.org/0000-0002-8409-036X}{0000-0002-8409-036X}} \and Yaguang Yang\thanks{Independent researcher, Rockville, MD 20850, USA. E-mail: \texttt{yaguang.yang@verizon.net}. ORCID: \href{https://orcid.org/0000-0002-2943-9389}{0000-0002-2943-9389}}}
\date{}

\begin{document}
\maketitle

\begin{abstract}
In a predictor--corrector arc-search method, a point on the predictor arc is first selected and a corrector is then computed at that point. Since the Karush--Kuhn--Tucker (KKT) matrix changes with the selected point, this correction may require a second factorization in each iteration.
We propose a one-factorization arc-search method (\OFAS) for semidefinite programming (SDP) that avoids this second factorization, retains the corrector after the arc step, and uses only one KKT factorization per iteration.
The key is a three-term decomposition of the complementarity mismatch at the selected point on the arc, which allows the correction to be assembled from linear-system solutions using the factorization at the current iterate.
We also propose a curvature-amplified variant (\CAOFAS) that amplifies the second-order arc term.
The proposed methods are formulated with a homogeneous self-dual embedding. For a wide long-step neighborhood, we prove that both methods reduce the homogeneous complementarity and residuals below $\varepsilon$ in $O(n\log(1/\varepsilon))$ iterations for an $n$-by-$n$ SDP matrix variable.
Numerical experiments show that the one-factorization structure and the correction after the arc step are associated with reductions in computation time.
Compared with the Mehrotra-type method, \CAOFAS{} reduces the geometric-mean computation time by 17\% on SDPLIB and 13\% on neural network verification SDP problems.
\end{abstract}

\noindent\textbf{Keywords:} Semidefinite programming $\cdot$ Interior-point method $\cdot$ Arc-search $\cdot$ One-factorization $\cdot$ Homogeneous self-dual embedding

\section{Introduction}
\label{sec:introduction}

Semidefinite programs (SDPs) arise in control, combinatorial optimization, statistical modeling, and convex relaxations for nonconvex problems
\citep{VandenbergheBoyd1996,Alizadeh1995}. Primal--dual interior-point methods remain useful when high-accuracy solutions are required, as illustrated by widely used implementations such as SDPA, SDPT3, and SeDuMi
\citep{YamashitaFujisawaKojima2003,TutuncuTohTodd2003,Sturm1999}. Recent developments also include regularized SDP interior-point algorithms and general conic solvers built around homogeneous embeddings
\citep{CipollaGondzio2025,GoulartChen2026}. Their cost is strongly influenced by the linear algebra for the Newton/Karush--Kuhn--Tucker (KKT) equations. For SDP in particular, a Schur-complement or KKT factorization can dominate an iteration. This motivates algorithms that control the number of matrix factorizations as well as the iteration count \citep{CipollaGondzio2023}.

Homogeneous self-dual (HSD) embeddings let primal--dual interior-point iterations start without first constructing a feasible primal--dual pair. Self-dual LP formulations go back to Goldman and Tucker \citep{GoldmanTucker1956}, and Ye--Todd--Mizuno later gave an HSD interior-point algorithm for LP \citep{YeToddMizuno1994}. The embedding combines the primal and dual systems in an augmented formulation; after normalization, a limiting solution yields either an optimizer of the original problem or an infeasibility certificate. Homogeneous embeddings also underlie modern conic solvers such as Clarabel \citep{GoulartChen2026}, where symmetric cones are handled with Nesterov--Todd (NT) scaling in a primal--dual interior-point framework \citep{NesterovTodd1997,NesterovTodd1998}.

Arc-search interior-point methods incorporate local curvature by replacing a straight predictor with an arc when tracking the central path or an analogous path for infeasible iterates. At the current iterate $W_k$, denote the first two derivatives of a local path by $\dot W_k$ and $\ddot W_k$. With $\alpha$ as the arc angle, the predictor has the familiar form
\[
 W_k(\alpha)=W_k-\dot W_k\sin\alpha+\ddot W_k(1-\cos\alpha).
\]
Arc-search methods were first developed as polynomial-time algorithms for convex quadratic programming and LP
\citep{Yang2011,Yang2013}, and were later extended to infeasible interior-point methods, symmetric-cone optimization, and SDP
\citep{YangYamashita2018,YangLiuZhang2017,KheirfamMoslemi2018,ZhangYuanZhouLuoHuang2019,Kheirfam2021,KheirfamOsmanpourKeyanpour2021}. Other studies include a computationally efficient arc-search method for infeasible LP
\citep{Yang2018Efficient}, an extension to general convex optimization
\citep{Yang2023Convex}, a method combined with Nesterov's restarting strategy
\citep{IidaYamashita2024}, an extension to nonlinear constrained problems
\citep{Yang2025Nonlinear}, and a method that solves Newton systems inexactly
\citep{IidaYamashita2026}. See also \citet{Yang2020ArcSearch} for further developments of arc-search methods. We are not aware of an earlier arc-search analysis carried out directly in an HSD formulation.

Some predictor--corrector arc-search methods first select a point on the arc and then compute a separate corrector at that point, whereas other methods do not perform this additional correction
\citep{YangYamashita2018,IidaYamashita2024,Yang2018Efficient}. In the former case, the KKT coefficient matrix changes with the scaling at the selected point and may require another factorization. In the latter case, the coefficient matrix at the current iterate can be reused. A detailed comparison of these computational structures is given in Section~\ref{subsec:existing-corrector-structure-comparison}.

Motivated by this factorization cost, we construct a one-factorization arc-search method (\OFAS). It keeps the corrector applied after the arc point is selected, but factors the KKT matrix only at the current iterate. The central device is an exact decomposition of the selected-point complementarity mismatch into three terms in the current NT coordinates. We solve for the associated correction bases with the current KKT factorization and combine them only after the candidate values of the arc angle $\alpha$ and the centering-recovery parameter $\rho$ are fixed. Candidate changes therefore do not trigger a new KKT factorization. In \OFAS{}, the single factorization serves five algorithm-specific right-hand sides. HSD and predictor--corrector methods commonly reuse a factorization across several right-hand sides, as does Clarabel \citep{YeToddMizuno1994,GoulartChen2026}; here the three-term decomposition extends that reuse to a post-arc corrector whose matrix would otherwise vary with the selected point.

We further introduce a curvature-amplified one-factorization arc-search method (\CAOFAS), which selectively amplifies the arc term corresponding to the second-order direction. This modification uses curvature information more aggressively while keeping the same one-factorization structure as \OFAS{}. The amplified candidates are constructed using the directions and correction bases already computed in \OFAS{} and require no additional matrix factorization or linear-system solve. If an amplified candidate is not accepted, the method uses an \OFAS{} candidate. Thus, \CAOFAS{} tests curvature-amplified candidates while retaining the worst-case iteration bound established for \OFAS{}.

The contributions are threefold.
\begin{enumerate}[label=(\roman*),leftmargin=2.5em,itemsep=0.15em,topsep=0.25em]
\item We develop \OFAS{} for SDP, which retains a separate corrector after selecting the point on the arc while requiring only one KKT matrix factorization per iteration.
      We also construct \CAOFAS{}, a curvature-amplified method with the same one-factorization structure.
\item Using a wide long-step neighborhood, we establish the well-definedness of \OFAS{}, finite termination of the candidate selection, and the existence of a uniform admissible window at each iteration.
      The resulting iteration bound is $O(\nu\log(1/\varepsilon))$ for reducing the homogeneous complementarity and residuals below $\varepsilon$, where $\nu=n+1$ is the rank of the HSD cone.
      Under Assumption~\ref{ass:primal-dual-solvability}, we also study the projectively normalized HSD sequence and prove that its distance to the primal--dual optimal set tends to zero.
      Since \CAOFAS{} has an \OFAS{} fallback candidate, it inherits the same iteration bound.
\item We implement \OFAS{} and \CAOFAS{} in the HSD framework of Clarabel.jl and evaluate them on SDPLIB and neural network verification SDP (NNV-SDP) problems.
      First, we compare \OFAS{} with Two-factorization, which refactorizes the KKT coefficient matrix after selecting the point on the arc, and Corrector-free, which omits the separate corrector at that point. These comparisons examine the contributions of factorization reuse and the corrector. Among matched problems reaching the high-accuracy termination criterion, \OFAS{} reduced the computation time relative to Two-factorization by about 44\% on SDPLIB and about 38\% on NNV-SDP, and relative to Corrector-free by about 37\% and 42\%, respectively. We then compare \CAOFAS{} with \OFAS{} and observe that curvature amplification reduced the computation time by about 4\% and 19\%, respectively.
      Finally, we compare the methods with a Mehrotra-type method and the Liu-type method based on Liu--Liu (2012) \citep{Mehrotra1992,LiuLiu2012}. Among matched problems reaching the high-accuracy termination criterion, \CAOFAS{} reduced the computation time relative to the Mehrotra-type method by about 17\% on SDPLIB and about 13\% on NNV-SDP, and relative to the Liu-type method by about 20\% and 12\%, respectively.
\end{enumerate}

Section~\ref{sec:framework} develops the HSD-SDP setting, NT scaling, the wide long-step neighborhood, and the arc equations used later. Section~\ref{sec:methods} then introduces \OFAS{} and \CAOFAS{}, while Section~\ref{sec:complexity} establishes candidate-search termination, the iteration bound, and projective normalization for the solvable case. Numerical results are reported in Section~\ref{sec:numerical}, followed by the conclusions in Section~\ref{sec:conclusion}.

\section{SDP with an HSD Embedding and the Foundations of Long-Step Arc-Search}
\label{sec:framework}

\subsection{SDP and the HSD Formulation}

Let $n,m\in\mathbb N$. Write $\Ssym^n$ for the space of real symmetric $n\times n$ matrices; its positive semidefinite and positive definite cones are denoted by $\Ssym_+^n$ and $\Ssym_{++}^n$. For $U,V\in\Ssym^n$, the relations $U\succeq V$ and $U\succ V$ mean $U-V\in\Ssym_+^n$ and $U-V\in\Ssym_{++}^n$, respectively.
The problem data are $C\in\Ssym^n$, $b\in\R^m$, and a linear map $\A:\Ssym^n\to\R^m$ with adjoint $\Ast$. We write $U\bullet V:=\operatorname{tr}(UV)$ for the matrix inner product and use $I$ and $O$ for the identity and zero matrices of order $n$. Unless a different norm is indicated, vector norms are Euclidean and symmetric-matrix norms are Frobenius norms.

We consider the following primal SDP in equality standard form and its dual:
\begin{align*}
 p^*&:=\min_{X\in\Ssym^n}\{C\bullet X\mid \A(X)=b,\ X\succeq O\},&
 d^*&:=\max_{y\in\R^m,\,S\in\Ssym^n}\{b^Ty\mid \Ast(y)+S=C,\ S\succeq O\}.
\end{align*}
\begin{assumption}[Surjectivity]
\label{ass:surjectivity}
The map $\A$ is surjective.
\end{assumption}

Assumption~\ref{ass:surjectivity} is equivalent to linear independence of the equality constraints. In the setting considered here, linearly dependent equality constraints can be removed in preprocessing, and we assume that this preprocessing has been applied.

We embed the original primal--dual SDP into a homogeneous self-dual (HSD) system. The basic idea of the HSD formulation goes back to \citet{YeToddMizuno1994} for linear programming. Self-dual skew-symmetric embeddings and homogeneous interior-point methods for semidefinite programming were studied by \citet{deKlerkRoosTerlaky1997,PotraSheng1998}.
When the quadratic objective term is absent, the homogeneous embedding used in Clarabel reduces to the standard homogeneous self-dual embedding \citep{GoulartChen2026}.
Let the HSD variable be $W=(X,y,S,\tau,\kappa)$. We use the following system:
\begin{equation}
\begin{gathered}
  \A(X)-b\tau=0,\qquad
  \Ast(y)+S-C\tau=0,\qquad
  C\bullet X-b^Ty+\kappa=0,\\
  X\succeq O,\qquad S\succeq O,\qquad \tau\ge0,\qquad \kappa\ge0.
\end{gathered}
\label{eq:hsd-system}
\end{equation}
Here, $\tau\in\R$ homogenizes the primal--dual variables, and $\kappa\in\R$ is an auxiliary variable associated with the primal--dual gap.
We refer to this embedded system as the HSD-SDP formulation.
The pairs $(X,\tau)$ and $(S,\kappa)$ are primal and dual cone variables over $\Ssym_+^n\times\R_+$, and their complementarity is measured by $X\bullet S+\tau\kappa$.
If $\tau>0$, the original SDP variables are recovered by projective normalization, $(\bar X,\bar y,\bar S):=(X/\tau,y/\tau,S/\tau)$. For an exact HSD solution, the normalized point satisfies the primal and dual equalities of the original SDP. At an exact HSD solution with $\tau=0$ and $\kappa>0$, we have $C\bullet X-b^Ty=-\kappa<0$, so at least one of $C\bullet X<0$ and $b^Ty>0$ holds. The former certifies dual infeasibility, while the latter certifies primal infeasibility. The present complexity analysis focuses on the solvable case with $\tau>0$; iteration bounds for certificate recovery are outside its scope.
An infeasible interior-point method need not satisfy the equality part of \eqref{eq:hsd-system} exactly at every iteration. We therefore define the residuals
$\Rp(W):=\A(X)-b\tau$, $\Rd(W):=\Ast(y)+S-C\tau$, and
$\Rg(W):=C\bullet X-b^Ty+\kappa$.
Let the residual space be $\mathcal F:=\R^m\times\Ssym^n\times\R$. We also let $0_m\in\R^m$ be the zero vector and define the zero element of $\mathcal F$ by $O_{\mathcal F}:=(0_m,O,0)$.
We collect the three residuals in the HSD residual map
\begin{equation*}
 {
 \Hres(W):=(\Rp(W),\Rd(W),\Rg(W))\in\mathcal F.}
\end{equation*}
and equip $\mathcal F$ with the norm
$\|(u,V,\chi)\|_{\mathcal F}:=(\|u\|^2+\|V\|_F^2+|\chi|^2)^{1/2}$.
The map $\Hres$ is linear, and $\Hres(W)=O_{\mathcal F}$ is equivalent to the three linear equalities in the HSD system.
{In the interior-point iterations below, we call an HSD point $W=(X,y,S,\tau,\kappa)$ \emph{strictly interior} if $X,S\succ O$ and $\tau,\kappa>0$.}

For the iteration-complexity analysis and Algorithms~\ref{alg:ofas}--\ref{alg:caofas}, we start from the standard HSD initial point
$W_0:=(I,0,I,1,1)$.
This point lies in the interior of the cone, but in general does not satisfy $\Hres(W_0)=O_{\mathcal F}$.
We call $W_0$ the \emph{standard initial point}.

The next assumption is needed only for the analysis of normalized approximate solutions of the original SDP. Define the primal, dual, and joint optimal sets by
\begin{align*}
 \mathcal P^*
 &:=\{X\succeq O:\ \A(X)=b,\ C\bullet X=p^*\},\\
 \mathcal D^*
 &:=\{(y,S):\ \Ast(y)+S=C,\ S\succeq O,\ b^Ty=d^*\},\\
 {\mathcal Z^*}
 &:=\{(X,y,S):X\in\mathcal P^*,\ (y,S)\in\mathcal D^*\}.
\end{align*}

\begin{assumption}[Solvability and bounded optimal solution sets of the primal--dual problem]
\label{ass:primal-dual-solvability}
Both problems attain finite optimal values and strong duality holds, so $d^*=p^*\in\R$.
The sets $\mathcal P^*$ and $\mathcal D^*$ are bounded.
\end{assumption}

This assumption is not used in the homogeneous long-step analysis itself. It is needed only when an HSD point is quantitatively converted into a normalized approximate solution of the original SDP.
More specifically, Assumption~\ref{ass:surjectivity} is needed for nonsingularity of the KKT operator at the current iterate, whereas Assumption~\ref{ass:primal-dual-solvability} is used in Section~\ref{subsec:normalization-solvable-case} to exclude nonzero directions that would make the primal or dual optimal solution set unbounded and to control the projective scale.
Therefore, the homogeneous iteration bounds in Theorem~\ref{thm:ofas-longstep} and Corollary~\ref{cor:caofas-complexity-inheritance} do not require Assumption~\ref{ass:primal-dual-solvability}.
For the projective normalization introduced immediately after \eqref{eq:hsd-system}, the residual identities are
$\A(\bar X)-b=\Rp(W)/\tau$ and $\Ast(\bar y)+\bar S-C=\Rd(W)/\tau$ whenever $\tau>0$.
In Section~\ref{subsec:normalization-solvable-case}, under Assumption~\ref{ass:primal-dual-solvability}, we evaluate the relative scales of $\tau_k$ and the homogeneous progress measure along the sequence generated by Algorithms~\ref{alg:ofas}--\ref{alg:caofas}.
We then show that the normalized primal residual, dual residual, complementarity, and duality gap converge to zero.

\subsection{NT Scaling and Complementarity Geometry}

Throughout this subsection, let $\R_{++}:=\{t\in\R:t>0\}$, and let
$\lambda(A):=(\lambda_1(A),\ldots,\lambda_n(A))$ denote the eigenvalues of $A\in\Ssym^n$, ordered as
$\lambda_1(A)\le\cdots\le\lambda_n(A)$.
We use the direct-product Euclidean Jordan algebra
$\mathcal E:=\Ssym^n\times\R$ for one positive semidefinite (PSD) block and the scalar HSD block.
For $U=(U_X,u)$ and $V=(V_X,v)$ in $\mathcal E$, define the Jordan product by
\begin{equation*}
 {
 U\circ V:=\left(\frac{U_XV_X+V_XU_X}{2},uv\right),
 \qquad U\circ V=V\circ U.}
\end{equation*}
The identity and zero elements of $\mathcal E$ are $\bar I:=(I,1)$ and $\bar O:=(O,0)$, respectively, and the inner product is
$\langle U,V\rangle_{\mathcal E}:=U_X\bullet V_X+uv$.
The induced norm is written as
$\|U\|_{\mathcal E}:=\sqrt{\langle U,U\rangle_{\mathcal E}}$.
When the arguments clearly belong to $\mathcal E$, we omit the subscript $\mathcal E$ and write
$\langle\cdot,\cdot\rangle$ and $\|\cdot\|$.
We also define $\operatorname{Tr}_{\mathcal E}(U):=\langle\bar I,U\rangle_{\mathcal E}$ and
$\lambda_{\min}(U):=\min\{\lambda_1(U_X),u\}$.
For simplicity, we describe the method for a single PSD block.
The same construction applies to multiple PSD blocks over a direct-product cone, as briefly discussed in Remark~\ref{rem:multiple-psd-block-extension}.

Since $\Ssym_+^n\times\R_+$ is a symmetric cone, NT scaling applies to this product cone
\citep{NesterovTodd1997,NesterovTodd1998}.
We set $\nu:=n+1$ and define the average complementarity by
$\mu(W):=(X\bullet S+\tau\kappa)/\nu$.
For an HSD interior point $W$ with $X,S\succ O$, $\tau>0$, and $\kappa>0$, define
$\lambda(X,S):=\lambda(X^{1/2}SX^{1/2})$ and
$h(W):=(\lambda(X,S),\tau\kappa)\in\R_{++}^{\nu}$, and denote the $i$th component of $h(W)$ by $h_i(W)$.
The componentwise average of $h(W)$ is $\mu(W)$.

For the homogeneous analysis, we use the progress measure
\begin{equation*}
 \mathcal Q(W)
 :=\max\{\mu(W),\norm{\Rp(W)},\norm{\Rd(W)},|\Rg(W)|\}
\end{equation*}
which combines the average complementarity and the HSD residuals.
When $\tau>0$, we instead measure the KKT accuracy for the original SDP by
\begin{equation*}
 \mathcal Q_{\rm SDP}(W)
 :=\max\left\{
 \frac{\norm{\Rp(W)}}{\tau},
 \frac{\norm{\Rd(W)}}{\tau},
 \frac{X\bullet S}{\tau^2},
 \frac{|C\bullet X-b^Ty|}{\tau}
 \right\}.
\end{equation*}
This quantity measures the primal residual, dual residual, complementarity, and duality gap of the normalized point $(X/\tau,y/\tau,S/\tau)$.
To recover a solution of the original SDP, we use the stopping condition
$\mathcal Q_{\rm SDP}(W)\le\delta$ for a tolerance $\delta>0$.
The quantity $\mathcal Q(W)$ is used for the homogeneous progress analysis in Section~\ref{sec:complexity}.
Section~\ref{subsec:normalization-solvable-case} shows that the normalized stopping condition is satisfied after finitely many iterations.

For the PSD cone, we use the same NT scaling as in Clarabel.jl \citep{GoulartChen2026}.
For an HSD interior point
$\widehat W=(\widehat X,\widehat y,\widehat S,\widehat\tau,\widehat\kappa)$,
let $M(\widehat W)$ denote the inverse scaling matrix maintained at that point.
{
For the PSD block, an explicit symmetric NT choice is
\[
 M(\widehat W)
 =\left[
 \widehat X^{-1/2}
 \bigl(\widehat X^{1/2}\widehat S\widehat X^{1/2}\bigr)^{1/2}
 \widehat X^{-1/2}
 \right]^{1/2},
\]
where all matrix square roots are the principal positive definite square roots.
Indeed, with $G:=M(\widehat W)^TM(\widehat W)$, this choice gives
$G\widehat XG=\widehat S$. Multiplying this identity by $M(\widehat W)^{-T}$ from the left and by $M(\widehat W)^{-1}$ from the right gives the equality of the primal and dual variables in the scaled coordinates in~\eqref{eq:nt-scaling-balance}.}
Thus,
\begin{equation}
 M(\widehat W)\widehat X M(\widehat W)^T
 =M(\widehat W)^{-T}\widehat S M(\widehat W)^{-1}
 =:Q_X^{\rm NT}(\widehat W)\succ O
 \label{eq:nt-scaling-balance}
\end{equation}
holds.
We also define
$\xi(\widehat W):=\sqrt{\widehat\kappa/\widehat\tau}$.
For a generic HSD vector $V=(V_X,v_y,V_S,v_\tau,v_\kappa)$, define its NT coordinates with respect to the reference interior point $\widehat W$ by
\begin{equation}
 \begin{aligned}
 P(V;\widehat W)&:=\bigl(M(\widehat W)V_XM(\widehat W)^T,\ \xi(\widehat W)v_\tau\bigr),\\
 R(V;\widehat W)&:=\bigl(M(\widehat W)^{-T}V_SM(\widehat W)^{-1},\ \xi(\widehat W)^{-1}v_\kappa\bigr).
 \end{aligned}
 \label{eq:fixed-nt-coordinate-map}
\end{equation}
In particular,
\begin{equation*}
 {
 Q^{\rm NT}(\widehat W)
 :=P(\widehat W;\widehat W)=R(\widehat W;\widehat W)
 =\bigl(Q_X^{\rm NT}(\widehat W),\sqrt{\widehat\tau\widehat\kappa}\bigr).}
\end{equation*}
We represent complementarity in the NT coordinates of the same reference point by the quadratic map
\begin{equation*}
 {
 \Phi(V;\widehat W):=P(V;\widehat W)\circ R(V;\widehat W).}
\end{equation*}
At the current iterate $W_k$, we use the abbreviations
\[
 M_k:=M(W_k),\qquad \xi_k:=\xi(W_k),\qquad
 Q_k^{\rm NT}:=Q^{\rm NT}(W_k).
\]
Candidate points are evaluated in the NT coordinates determined by $W_k$.

The standard initial point $W_0$ satisfies $\mu(W_0)=1$ and $\Phi(W_0;W_0)=\bar I$.
Let $\mathcal K:=\Ssym_+^n\times\R_+$, and denote the central-path parameter by $\mu_{\rm c}$.
By the standard central-path theory for HSD-SDP \citep{PotraSheng1998,NesterovTodd1997,NesterovTodd1998}, under Assumption~\ref{ass:surjectivity}, for every $0<\mu_{\rm c}\le1$ there exists a unique interior point $W^{\rm c}(\mu_{\rm c})$ satisfying
\begin{equation*}
 {
 \Hres(W^{\rm c}(\mu_{\rm c}))=\mu_{\rm c}\,\Hres(W_0),\qquad
 \Phi(W^{\rm c}(\mu_{\rm c});W^{\rm c}(\mu_{\rm c}))=\mu_{\rm c}\bar I.}
\end{equation*}
The second equation is the standard centrality condition based on the Jordan product on $\mathcal K$.
For the PSD block, it is equivalent to $S=\mu_{\rm c}X^{-1}$, and for the scalar HSD block, to $\kappa=\mu_{\rm c}/\tau$.
We call $\{W^{\rm c}(\mu_{\rm c}):0<\mu_{\rm c}\le1\}$ the HSD central path.
Along this path, $\mu(W^{\rm c}(\mu_{\rm c}))=\mu_{\rm c}$.
We introduce the central path only to describe the geometry behind the local tracking curve.
Its use here does not impose a solvability assumption for the original SDP on the homogeneous iteration bound in Section~\ref{sec:complexity}.
Projective normalization for the original SDP is treated in Section~\ref{subsec:normalization-solvable-case}.

Since $P(\cdot;\widehat W)$ and $R(\cdot;\widehat W)$ are linear,
$\Phi(\cdot;\widehat W)$ is quadratic.
For an HSD interior point $\widehat W$ and HSD variables or directions $V^{(1)},V^{(2)}$, define the associated cross term by
\begin{equation*}
 \mathcal B(V^{(1)},V^{(2)};\widehat W)
 :=P(V^{(1)};\widehat W)\circ R(V^{(2)};\widehat W)
  +P(V^{(2)};\widehat W)\circ R(V^{(1)};\widehat W).
\end{equation*}
The map $\mathcal B(\cdot,\cdot;\widehat W)$ is symmetric and bilinear, and
\[
\Phi(V^{(1)}+V^{(2)};\widehat W)
=\Phi(V^{(1)};\widehat W)
+\mathcal B(V^{(1)},V^{(2)};\widehat W)
+\Phi(V^{(2)};\widehat W).
\]
For an HSD direction $D$, define the linearized HSD KKT operator at the reference interior point $\widehat W$ by
\begin{equation}
 \mathcal L(\widehat W)[D]
 :=\bigl(\Hres(D),\mathcal B(\widehat W,D;\widehat W)\bigr).
 \label{eq:general-kkt-operator-definition}
\end{equation}

\subsection{Current-Iterate KKT System, Predictor Arc, and Wide Long-Step Neighborhood}

A general infeasible interior point $W_k$ need not lie on the HSD central path.
Following the infeasible arc-search literature, we introduce a local path on which the HSD residual map and the NT-coordinate complementarity are multiplied by the same scalar parameter \citep{YangYamashita2018,IidaYamashita2024}.
Its first two derivatives determine the predictor arc.

Let an HSD direction be $D=(D_X,d_y,D_S,d_\tau,d_\kappa)$.
At the current iterate $W_k$, \eqref{eq:fixed-nt-coordinate-map} becomes
\begin{equation}
 \begin{aligned}
 P(D;W_k)&=\bigl(M_kD_XM_k^T,\ \xi_k d_\tau\bigr),\\
 R(D;W_k)&=\bigl(M_k^{-T}D_SM_k^{-1},\ \xi_k^{-1}d_\kappa\bigr).
 \end{aligned}
 \label{eq:direction-nt-components}
\end{equation}
Since $P(W_k;W_k)=R(W_k;W_k)=Q_k^{\rm NT}$ at the current iterate,
$\mathcal B(W_k,D;W_k)=Q_k^{\rm NT}\circ\{P(D;W_k)+R(D;W_k)\}$.
Therefore, the specialization of \eqref{eq:general-kkt-operator-definition} to $\widehat W=W_k$ is
\begin{equation}
 \mathcal L(W_k)[D]
 =
 \left(
 \begin{array}{c}
  \A(D_X)-bd_\tau\\
  \Ast(d_y)+D_S-Cd_\tau\\
  C\bullet D_X-b^Td_y+d_\kappa\\
  Q_k^{\rm NT}\circ\{P(D;W_k)+R(D;W_k)\}
 \end{array}
 \right)
 \in\mathcal F\times\mathcal E.
 \label{eq:current-kkt-operator-definition}
\end{equation}
Thus, \eqref{eq:current-kkt-operator-definition} gives the matrix representation of \eqref{eq:general-kkt-operator-definition} at the current iterate.
By Lemma~\ref{lem:current-kkt-nonsingular-longstep}, $\mathcal L(W_k)$ is nonsingular under Assumption~\ref{ass:surjectivity} and strict interiority of $W_k$.
In this paper, ``one factorization'' means that this matrix is factorized only once in each interior-point iteration and that the resulting factorization is reused for multiple right-hand sides.

Using the NT scaling determined by $W_k$, consider a local curve $\widehat W_k(t)$ near $t=1$ satisfying
\begin{equation}
 \widehat W_k(1)=W_k,\qquad
 \Hres(\widehat W_k(t))=t\,\Hres(W_k),\qquad
 \Phi(\widehat W_k(t);W_k)=t\,\Phi(W_k;W_k).
 \label{eq:local-tracking-curve}
\end{equation}
The Jacobian with respect to $W$ in \eqref{eq:local-tracking-curve} at $(W,t)=(W_k,1)$ is $\mathcal L(W_k)$.
Since this operator is nonsingular, the implicit function theorem gives a unique smooth curve locally around $W_k$.
This curve is distinct from the HSD central path. Along it, the parameter $t$ scales both the residual map and the NT-coordinate complementarity from their values at the current iterate.

Let $\dot W_k$ and $\ddot W_k$ be the first and second derivatives at $t=1$, respectively.
We call them the first-order and second-order directions.
Differentiating \eqref{eq:local-tracking-curve} once and using the linearity of $\Hres$ and the quadratic structure of $\Phi$ gives
\begin{equation}
 \mathcal L(W_k)[\dot W_k]
 =\bigl(\Hres(W_k),\ \Phi(W_k;W_k)\bigr).
 \label{eq:first-derivative-full-kkt-paper}
\end{equation}
Differentiating twice, the right-hand side has zero second derivative, and only the quadratic term of $\Phi$ remains. Thus,
\begin{equation}
 \mathcal L(W_k)[\ddot W_k]
 =\bigl(O_{\mathcal F},\ -2\Phi(\dot W_k;W_k)\bigr).
 \label{eq:second-order-full-kkt-paper}
\end{equation}
Hence, $\dot W_k$ and $\ddot W_k$ are obtained from two right-hand sides with the same KKT coefficient matrix.

Using these directions, we define the predictor arc by
\begin{equation}
 W_k(\alpha)
 =W_k-\dot W_k\sin\alpha+\ddot W_k(1-\cos\alpha).
 \label{eq:base-arc-paper}
\end{equation}
We have $W_k(0)=W_k$, the tangent direction at $\alpha=0$ is $-\dot W_k$, and the second derivative is $\ddot W_k$.
The minus sign reflects that the arc follows the local tracking curve as its parameter $t$ decreases from $1$.
We write $W_k(\alpha_k)$ for the point corresponding to an accepted angle $\alpha_k$.

For the long-step analysis, we use the wide long-step neighborhood
\begin{equation*}
 \Nlong(\gamma)
 :=\left\{
 W=(X,y,S,\tau,\kappa)\ \middle|\
 \begin{array}{l}
 X\succ O,\ S\succ O,\ \tau>0,\ \kappa>0,\\
 \min_i h_i(W)\ge\gamma\mu(W)
 \end{array}
 \right\},\qquad 0<\gamma<1,
\end{equation*}
which is independent of the particular scaling representation.
This complementarity neighborhood does not require HSD feasibility and therefore contains infeasible interior points.
We test candidate points in the NT coordinates of the current iterate $W_k$.
At the current iterate,
$\lambda_{\min}(\Phi(W_k;W_k))=\min_i h_i(W_k)$.
For a general candidate $W$, Lemma~\ref{lem:current-frame-certificate} shows that
$\lambda_{\min}(\Phi(W;W_k))\ge\gamma\mu(W)$ implies $W\in\Nlong(\gamma)$.
Thus, the candidate search can be performed in the current iterate's NT coordinates, while the analysis uses a scaling-independent neighborhood.

\subsection{Comparison of Corrector Structures in Existing Arc-Search Methods}
\label{subsec:existing-corrector-structure-comparison}

To compare the computational structure of OFAS with existing methods, we first consider methods that rebuild the NT scaling at the accepted point on the predictor arc and compute the corrector there.
In such methods, restoring centrality at the accepted point changes the coefficient matrix.
In Algorithm~3.1 of \citet{YangYamashita2018} and Algorithm~1 of \citet{IidaYamashita2024}, the first-order and second-order directions are computed at the current iterate, whereas the corrector system uses the selected point on the arc in its coefficient matrix.
Below, we give an HSD-SDP reference construction in NT coordinates that captures only this computational structure.

Assume that the accepted point $W_k(\alpha_k)$ is strictly interior.
Following the centering target used in these LP methods, set
$\mu_k^{\rm ref}:=(1-\sin\alpha_k)\mu(W_k)$ and $T_k^{\rm ref}:=\mu_k^{\rm ref}\bar I$.
Define the centering deviation in the NT coordinates of the accepted point by
$\Delta_k^{\rm ref}:=\Phi(W_k(\alpha_k);W_k(\alpha_k))-T_k^{\rm ref}$.
A corrector $D_k^{\rm cor,ref}$ that linearly removes this mismatch without changing the HSD residuals is obtained from
\begin{equation*}
 \Hres(D_k^{\rm cor,ref})=O_{\mathcal F},\qquad
 \mathcal B(W_k(\alpha_k),D_k^{\rm cor,ref};W_k(\alpha_k))=-\Delta_k^{\rm ref}.
\end{equation*}
This corrector uses $\mathcal L(W_k(\alpha_k))$, namely the KKT operator formed at the accepted point; in general
$\mathcal L(W_k(\alpha_k))\neq\mathcal L(W_k)$.
Thus, with a direct linear solver, constructing the predictor arc requires one factorization of $\mathcal L(W_k)$, whereas the accepted-point corrector requires another factorization of $\mathcal L(W_k(\alpha_k))$.

Other methods incorporate centering information directly into the arc construction and do not solve a separate corrector system at the accepted point.
For example, \citet{Yang2018Efficient} uses the same current-iterate coefficient matrix for the first-order and second-order directions, adds a centering term to the right-hand side for the second-order direction, and directly updates to a point on the resulting arc.
Such a method can reuse the factorization at the current iterate for multiple right-hand sides, but it does not apply a separate post-arc corrector based on the complementarity mismatch at the accepted point.
Reusing one coefficient-matrix factorization for several right-hand sides is standard.
The key distinction is that, when a method uses an accepted-point corrector, updating the scaling at the accepted point changes the coefficient matrix.

\section{Predictor--Corrector Arc-Search with Current-Iterate NT Scaling and Curvature Amplification}
\label{sec:methods}

In this section, we construct \OFAS{} and \CAOFAS{} in the current-iterate NT coordinates introduced in Section~\ref{sec:framework}.
We use $\Phi$, $\mathcal B$, $\mathcal L(W_k)$, the first-order direction $\dot W_k$, the second-order direction $\ddot W_k$, and the predictor arc $W_k(\alpha)$ defined there.
For a general candidate angle $\alpha$, we write $\sigma:=\sin\alpha$ and $\psi:=1-\cos\alpha$ throughout this section.

The main idea of \OFAS{} is to retain an explicit post-arc correction while expressing its candidate-dependent complementarity mismatch as an exact sum of three components that depend only on the current iterate $W_k$.
We precompute the correction response to each component using the same KKT factorization at $W_k$.
The dependence on the candidate parameters then appears only through scalar coefficients, so the correction can be reconstructed without refactorizing the KKT matrix at the accepted point.
We first derive this three-term decomposition and the corresponding one-factorization correction basis.
We then describe the corrected candidates, candidate selection, \OFAS{}, and \CAOFAS{}, which uses the same basis.

\subsection{Three-Term Decomposition of the Complementarity Mismatch on the Predictor Arc}
\label{subsec:corrector-basis-proof}

Fix iteration $k$ and define
\(U_k(\alpha):=W_k(\alpha)-W_k=\sigma(-\dot W_k)+\psi\ddot W_k\).
The complementarity rows of \eqref{eq:first-derivative-full-kkt-paper} and \eqref{eq:second-order-full-kkt-paper}, together with the predictor arc \eqref{eq:base-arc-paper}, give
\begin{equation}
 \mathcal B(W_k,-\dot W_k;W_k)=-\Phi(W_k;W_k),
 \qquad
 \mathcal B(W_k,\ddot W_k;W_k)=-2\Phi(-\dot W_k;W_k).
 \label{eq:pred-second-current-identities}
\end{equation}
Together with the quadratic structure of $\Phi(\cdot;W_k)$, these identities give the following exact decomposition.

\begin{lemma}[Exact quadratic decomposition of complementarity on the predictor arc]
\label{lem:current-arc-quadratic-decomposition}
For any $\alpha$,
\begin{equation}
\begin{aligned}
 \Phi(W_k(\alpha);W_k)
 &=(1-\sigma)\Phi(W_k;W_k)
 +\sigma\psi\,\mathcal B(-\dot W_k,\ddot W_k;W_k)\\
 &\quad+\psi^2\{\Phi(\ddot W_k;W_k)-\Phi(-\dot W_k;W_k)\}.
\end{aligned}
 \label{eq:current-arc-product-exact}
\end{equation}
\end{lemma}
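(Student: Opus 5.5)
The plan is to expand $\Phi(W_k(\alpha);W_k)$ with the bilinear identity for $\Phi$ and then replace the cross terms involving $W_k$ using \eqref{eq:pred-second-current-identities}. Throughout, write $\Phi(\cdot)$ and $\mathcal B(\cdot,\cdot)$ for $\Phi(\cdot;W_k)$ and $\mathcal B(\cdot,\cdot;W_k)$, and set $D_1:=-\dot W_k$ and $D_2:=\ddot W_k$, so that $W_k(\alpha)=W_k+\sigma D_1+\psi D_2$. Since $P(\cdot;W_k)$ and $R(\cdot;W_k)$ are linear and $\mathcal B$ is symmetric bilinear, the expansion of $\Phi$ of a sum, extended to three summands, gives
\begin{equation*}
\Phi(W_k(\alpha))=\Phi(W_k)+\sigma\mathcal B(W_k,D_1)+\psi\mathcal B(W_k,D_2)+\sigma^2\Phi(D_1)+\sigma\psi\,\mathcal B(D_1,D_2)+\psi^2\Phi(D_2).
\end{equation*}
Note that $\Phi(tV)=t^2\Phi(V)$ and $\mathcal B(sU,tV)=st\,\mathcal B(U,V)$.

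Next I substitute the two identities in \eqref{eq:pred-second-current-identities}. These are $\mathcal B(W_k,D_1)=-\Phi(W_k)$ and $\mathcal B(W_k,D_2)=-2\Phi(D_1)$. Since $\mathcal B$ is symmetric, the argument order in $\mathcal B(W_k,\cdot)$ versus $\mathcal B(\cdot,W_k)$ does not matter. They follow from the complementarity rows of \eqref{eq:first-derivative-full-kkt-paper} and \eqref{eq:second-order-full-kkt-paper} by linearity in the direction and $\Phi(-V)=\Phi(V)$. After the substitution, the expression becomes
\begin{equation*}
(1-\sigma)\Phi(W_k)+(\sigma^2-2\psi)\Phi(D_1)+\sigma\psi\,\mathcal B(D_1,D_2)+\psi^2\Phi(D_2).
\end{equation*}

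The only nontrivial step is the trigonometric identity $\sigma^2-2\psi=-\psi^2$. Indeed,
\begin{equation*}
\sin^2\alpha-2(1-\cos\alpha)=-(1-\cos\alpha)^2,
\end{equation*}
which holds because $(1-\cos\alpha)^2=1-2\cos\alpha+\cos^2\alpha$ and $\sin^2\alpha+\cos^2\alpha=1$. Substituting it merges the $\Phi(D_1)$ and $\Phi(D_2)$ terms into $\psi^2\{\Phi(\ddot W_k)-\Phi(-\dot W_k)\}$, which is exactly \eqref{eq:current-arc-product-exact}.

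The main care point is bookkeeping rather than any real obstacle. The cross term $\mathcal B(D_1,D_2)$ appears with coefficient $\sigma\psi$ once, not twice, because $\mathcal B$ already includes both products $P\circ R$. I will also confirm that no interiority of $W_k(\alpha)$ is needed: the identity is purely algebraic in the fixed frame of $W_k$, so it holds for every $\alpha$.
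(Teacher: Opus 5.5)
Your proposal is correct and follows essentially the same route as the paper: a quadratic expansion of $\Phi(W_k+U_k(\alpha);W_k)$, substitution of the two identities in \eqref{eq:pred-second-current-identities} to reach $(1-\sigma)\Phi(W_k;W_k)+(\sigma^2-2\psi)\Phi(-\dot W_k;W_k)+\sigma\psi\,\mathcal B(-\dot W_k,\ddot W_k;W_k)+\psi^2\Phi(\ddot W_k;W_k)$, and then $\sigma^2-2\psi=-\psi^2$. Your coefficient bookkeeping, including the single $\sigma\psi$ factor on the cross term, is also correct.
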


\begin{proof}
A quadratic expansion of $\Phi(W_k+U_k(\alpha);W_k)$, together with \eqref{eq:pred-second-current-identities}, gives
\[
 \Phi(W_k(\alpha);W_k)
 =(1-\sigma)\Phi(W_k;W_k)
 +(\sigma^2-2\psi)\Phi(-\dot W_k;W_k)
 +\sigma\psi\mathcal B(-\dot W_k,\ddot W_k;W_k)
 +\psi^2\Phi(\ddot W_k;W_k).
\]
Using $\sigma^2=2\psi-\psi^2$ gives \eqref{eq:current-arc-product-exact}.
\end{proof}

For a centering-recovery parameter $\rho\in[0,1]$, define the corrector complementarity target by
\begin{equation}
 T_k(\alpha,\rho)
 :=(1-\sigma)\{(1-\rho)\Phi(W_k;W_k)+\rho\mu(W_k)\bar I\}
 \label{eq:corrector-target-complementarity}
\end{equation}
and define the complementarity mismatch by
\begin{equation}
 \Delta_k^{\rm comp}(\alpha,\rho)
 :=\Phi(W_k(\alpha);W_k)-T_k(\alpha,\rho).
 \label{eq:corrector-residual-definition}
\end{equation}
Here, $\rho$ specifies the fraction of the current centrality deviation to be corrected, while $1-\sigma$ gives the complementarity reduction produced by the predictor arc.

\begin{proposition}[Three-term decomposition of the complementarity mismatch]
\label{prop:corrector-rhs-exact-basis}
For any $\alpha$ and $\rho\in[0,1]$,
\begin{equation}
 \Delta_k^{\rm comp}(\alpha,\rho)
 =\rho(1-\sigma)\Delta_k^{\rm cen}
  +\sigma\psi\Delta_k^{\rm mix}
  +\psi^2\Delta_k^{\rm rem},
 \label{eq:corrector-rhs-basis-paper}
\end{equation}
where
\begin{equation}
 \begin{aligned}
 \Delta_k^{\rm cen}&:=\Phi(W_k;W_k)-\mu(W_k)\bar I,\\
 \Delta_k^{\rm mix}&:=\mathcal B(-\dot W_k,\ddot W_k;W_k),\\
 \Delta_k^{\rm rem}&:=\Phi(\ddot W_k;W_k)-\Phi(-\dot W_k;W_k).
 \end{aligned}
 \label{eq:corrector-basis-rhs-definitions}
\end{equation}
\end{proposition}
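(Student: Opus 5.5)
This follows directly from Lemma~\ref{lem:current-arc-quadratic-decomposition}; all that is needed is to expand the target $T_k(\alpha,\rho)$ and subtract. The plan is to start from \eqref{eq:corrector-residual-definition} and substitute the exact expression \eqref{eq:current-arc-product-exact} for $\Phi(W_k(\alpha);W_k)$. This accounts for the $\sigma\psi$ and $\psi^2$ terms immediately, since by definition \eqref{eq:corrector-basis-rhs-definitions} they coincide with $\sigma\psi\Delta_k^{\rm mix}$ and $\psi^2\Delta_k^{\rm rem}$.

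It then remains to handle the $(1-\sigma)$ part. I will expand the target \eqref{eq:corrector-target-complementarity} by linearity:
\[
 T_k(\alpha,\rho)=(1-\sigma)\Phi(W_k;W_k)-\rho(1-\sigma)\bigl\{\Phi(W_k;W_k)-\mu(W_k)\bar I\bigr\}.
\]
Subtracting this from the term $(1-\sigma)\Phi(W_k;W_k)$ of \eqref{eq:current-arc-product-exact} cancels the common $(1-\sigma)\Phi(W_k;W_k)$ and leaves $\rho(1-\sigma)\Delta_k^{\rm cen}$. Collecting the three pieces gives \eqref{eq:corrector-rhs-basis-paper}.

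There is no real obstacle: the only care needed is sign bookkeeping when distributing $(1-\rho)$ inside $T_k$. No restriction on $\alpha$ or on $\rho\in[0,1]$ is used; the identity is purely algebraic, inherited from the exactness of Lemma~\ref{lem:current-arc-quadratic-decomposition}.
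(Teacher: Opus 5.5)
Your proposal is correct and is essentially the paper's proof. Like the paper, you rewrite $T_k(\alpha,\rho)$ as $(1-\sigma)\Phi(W_k;W_k)-\rho(1-\sigma)\Delta_k^{\rm cen}$, substitute Lemma~\ref{lem:current-arc-quadratic-decomposition} into the definition of the mismatch, and cancel the common $(1-\sigma)\Phi(W_k;W_k)$ term, with no restriction on $\alpha$ or $\rho$ needed.
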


\begin{proof}
Rearranging \eqref{eq:corrector-target-complementarity} gives
\(T_k(\alpha,\rho)=(1-\sigma)\Phi(W_k;W_k)
-\rho(1-\sigma)\{\Phi(W_k;W_k)-\mu(W_k)\bar I\}\).
Substituting this expression into \eqref{eq:corrector-residual-definition} and applying Lemma~\ref{lem:current-arc-quadratic-decomposition} gives \eqref{eq:corrector-rhs-basis-paper}.
\end{proof}

The three components in \eqref{eq:corrector-rhs-basis-paper} correspond to the centrality deviation at the current iterate, the cross term between the first- and second-order directions, and the quadratic remainder, respectively.
Thus, once $W_k$, $\dot W_k$, and $\ddot W_k$ are fixed, all three components are fixed.
The candidate $(\alpha,\rho)$ enters only through the scalar coefficients $\rho(1-\sigma)$, $\sigma\psi$, and $\psi^2$.

\subsection{One-Factorization Correction Basis and Correction-Response Map}
\label{subsec:one-factorization-basis}

\begin{lemma}[Unique solvability of the NT KKT operator]
\label{lem:current-kkt-nonsingular-longstep}
Under Assumption~\ref{ass:surjectivity}, the NT KKT operator $\mathcal L(\widehat W)$ is nonsingular at any HSD interior point $\widehat W$.
\end{lemma}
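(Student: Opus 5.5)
Since $\mathcal L(\widehat W)$ is a linear map between spaces of the same finite dimension, we prove injectivity. The domain is HSD directions $D=(D_X,d_y,D_S,d_\tau,d_\kappa)\in\Ssym^n\times\R^m\times\Ssym^n\times\R\times\R$, of dimension $2\dim\Ssym^n+m+2$. The codomain is $\mathcal F\times\mathcal E$, of dimension $(m+\dim\Ssym^n+1)+(\dim\Ssym^n+1)$. These agree. So suppose $\mathcal L(\widehat W)[D]=0$; we must show $D=0$.

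\emph{Step 1: skew-symmetry gives $\langle P,R\rangle\le 0$.} Write $P:=P(D;\widehat W)$ and $R:=R(D;\widehat W)$. From $\Hres(D)=O_{\mathcal F}$ we have $\A(D_X)=bd_\tau$, $\Ast(d_y)+D_S=Cd_\tau$, and $d_\kappa=b^Td_y-C\bullet D_X$. Then
\[
D_X\bullet D_S=D_X\bullet(Cd_\tau-\Ast(d_y))=d_\tau\,C\bullet D_X-d_\tau\,b^Td_y=-d_\tau d_\kappa .
\]
Hence $D_X\bullet D_S+d_\tau d_\kappa=0$. The NT coordinate maps \eqref{eq:fixed-nt-coordinate-map} preserve this pairing: $MD_XM^T\bullet M^{-T}D_SM^{-1}=D_X\bullet D_S$, and $\xi d_\tau\cdot\xi^{-1}d_\kappa=d_\tau d_\kappa$. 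Therefore $\langle P,R\rangle_{\mathcal E}=0$.

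\emph{Step 2: the complementarity row forces $P=R=0$.} The last row of $\mathcal L(\widehat W)[D]$ is $\mathcal B(\widehat W,D;\widehat W)=0$. By \eqref{eq:nt-scaling-balance}, $P(\widehat W;\widehat W)=R(\widehat W;\widehat W)=Q:=Q^{\rm NT}(\widehat W)$, so this row reads $Q\circ(P+R)=0$. Since $Q$ lies in the interior of the cone, the Lyapunov operator $L_Q:U\mapsto Q\circ U$ is nonsingular. On the PSD block it is $U\mapsto(Q_XU+UQ_X)/2$ with $Q_X\succ O$; this is the Sylvester equation with spectrum $\{(\lambda_i+\lambda_j)/2\}$, all positive. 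On the scalar block it is multiplication by $\sqrt{\widehat\tau\widehat\kappa}>0$. Hence $P+R=0$, i.e.\ $R=-P$. Combining with Step 1 gives $0=\langle P,R\rangle=-\norm{P}^2$, so $P=R=0$.

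\emph{Step 3: recover $D=0$.} Because $M(\widehat W)$ is nonsingular and $\xi(\widehat W)>0$, $P=R=0$ gives $D_X=O$, $D_S=O$, $d_\tau=0$, and $d_\kappa=0$. The dual residual row then reduces to $\Ast(d_y)=0$. By Assumption~\ref{ass:surjectivity}, $\Ast$ is injective, so $d_y=0$. This proves injectivity, and by the dimension count, nonsingularity.

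The only delicate point is Step 1: the skew-symmetric cancellation $D_X\bullet D_S+d_\tau d_\kappa=0$ must be checked against the exact sign conventions of the residual map $\Hres$, including the $+d_\kappa$ term in the gap row. The invertibility of $L_Q$ in Step 2 is standard and only requires $Q\succ O$ on each block.
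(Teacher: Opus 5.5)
Your proof is correct and follows essentially the same route as the paper's proof in Appendix~\ref{app:kkt-nonsingularity}. Both derive $\langle P,R\rangle=0$ from the skew structure of $\Hres$, invert $L_Q$ to get $P+R=\bar O$, and use injectivity of $\Ast$ to conclude $d_y=0$; the only differences are that you order the first two steps the other way round and write out the dimension count the paper merely asserts. One small fix: your Step~1 heading says $\langle P,R\rangle\le 0$, but the step actually proves $\langle P,R\rangle=0$, which is what Step~2 uses.
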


A proof is given in Appendix~\ref{app:kkt-nonsingularity}.

For an HSD interior point $\widehat W$ and $\Delta\in\mathcal E$, define the correction-response map $\mathcal G(\Delta;\widehat W)$ by
\begin{equation}
 \mathcal L(\widehat W)[\mathcal G(\Delta;\widehat W)]
 =\bigl(O_{\mathcal F},-\Delta\bigr).
 \label{eq:current-corrector-solution-map}
\end{equation}
By Lemma~\ref{lem:current-kkt-nonsingular-longstep}, this map is well defined and $\mathcal G(\cdot;\widehat W)$ is linear.
{If $D=\mathcal G(\Delta;\widehat W)$, then
$\Hres(D)=O_{\mathcal F}$ and
$\mathcal B(\widehat W,D;\widehat W)=-\Delta$.}
Below, we use it with $\widehat W=W_k$.

Define the unit-complementarity direction by
\begin{equation}
 \mathcal L(W_k)[D_k^{\rm unit}]=\bigl(O_{\mathcal F},\bar I\bigr),
 \label{eq:unit-complementarity-direction}
\end{equation}
and define
$D_k^{\rm mix}:=\mathcal G(\Delta_k^{\rm mix};W_k)$,
$D_k^{\rm rem}:=\mathcal G(\Delta_k^{\rm rem};W_k)$, and
$D_k^{\rm cen}:=\mathcal G(\Delta_k^{\rm cen};W_k)$.
Each of $D_k^{\rm unit}$, $D_k^{\rm cen}$, $D_k^{\rm mix}$, and $D_k^{\rm rem}$ is an HSD residual-invariant direction; equivalently, $\Hres(D_k^{\rm term})=O_{\mathcal F}$ for ${\rm term}\in\{{\rm unit},{\rm cen},{\rm mix},{\rm rem}\}$.
For $D_k^{\rm mix}$ and $D_k^{\rm rem}$, the right-hand sides in \eqref{eq:current-corrector-solution-map} can be formed directly from $\dot W_k$ and $\ddot W_k$ using \eqref{eq:corrector-basis-rhs-definitions}.
The centering direction $D_k^{\rm cen}$ can instead be recovered algebraically by the following lemma, so it requires no additional linear-system solve.

\begin{lemma}[Algebraic reconstruction of the centering correction]
\label{lem:center-direction-solve-free}
\begin{equation}
 D_k^{\rm cen}
 =-W_k+\dot W_k+\mu(W_k)D_k^{\rm unit}.
 \label{eq:center-identity-paper}
\end{equation}
\end{lemma}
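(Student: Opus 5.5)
The plan is to use uniqueness from Lemma~\ref{lem:current-kkt-nonsingular-longstep}. Since $\mathcal L(W_k)$ is nonsingular, it suffices to show that the candidate $D:=-W_k+\dot W_k+\mu(W_k)D_k^{\rm unit}$ satisfies the defining system \eqref{eq:current-corrector-solution-map} for $D_k^{\rm cen}$, namely
\[
\mathcal L(W_k)[D]=\bigl(O_{\mathcal F},-\Delta_k^{\rm cen}\bigr).
\]
Because $\mathcal L(W_k)$ is linear, I compute $\mathcal L(W_k)$ on each of the three summands separately and then add the results.

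The residual block is handled first. Since $\Hres$ is linear, $\Hres(-W_k)=-\Hres(W_k)$. By \eqref{eq:first-derivative-full-kkt-paper}, $\Hres(\dot W_k)=\Hres(W_k)$. By \eqref{eq:unit-complementarity-direction}, $\Hres(D_k^{\rm unit})=O_{\mathcal F}$. The sum is therefore $O_{\mathcal F}$, as required.

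The complementarity block is handled next, and it is the only step needing care. I need $\mathcal B(W_k,W_k;W_k)$, which equals $2\,P(W_k;W_k)\circ R(W_k;W_k)=2\Phi(W_k;W_k)$. Equivalently, by \eqref{eq:current-kkt-operator-definition} it equals $Q_k^{\rm NT}\circ 2Q_k^{\rm NT}=2\Phi(W_k;W_k)$. Hence the summand $-W_k$ contributes $-2\Phi(W_k;W_k)$. The summand $\dot W_k$ contributes $+\Phi(W_k;W_k)$ by \eqref{eq:first-derivative-full-kkt-paper}. The summand $\mu(W_k)D_k^{\rm unit}$ contributes $\mu(W_k)\bar I$ by \eqref{eq:unit-complementarity-direction}. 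The total is
\[
-\Phi(W_k;W_k)+\mu(W_k)\bar I=-\Delta_k^{\rm cen},
\]
by \eqref{eq:corrector-basis-rhs-definitions}.

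Uniqueness then gives \eqref{eq:center-identity-paper}. The main point to verify carefully is the factor $2$ in $\mathcal B(W_k,W_k;W_k)$. It comes from the symmetric definition of $\mathcal B$, under which $\mathcal B(V,V;\widehat W)=2\Phi(V;\widehat W)$. This factor is exactly what makes the coefficient of $W_k$ equal to $-1$ rather than $-1/2$.
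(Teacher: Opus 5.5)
Your proposal is correct and is essentially the paper's argument: the paper also combines $\mathcal L(W_k)[W_k]=(\Hres(W_k),2\Phi(W_k;W_k))$ with \eqref{eq:first-derivative-full-kkt-paper} and \eqref{eq:unit-complementarity-direction}. The only difference is that the paper finishes through the linearity of $\mathcal G(\cdot;W_k)$, where you verify the candidate directly and invoke uniqueness from Lemma~\ref{lem:current-kkt-nonsingular-longstep}.
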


\begin{proof}
Using \eqref{eq:first-derivative-full-kkt-paper} and
$\mathcal L(W_k)[W_k]=(\Hres(W_k),2\Phi(W_k;W_k))$, we obtain
$\mathcal G(\Phi(W_k;W_k);W_k)=-W_k+\dot W_k$.
Equation~\eqref{eq:unit-complementarity-direction} also gives
$D_k^{\rm unit}=\mathcal G(-\bar I;W_k)$.
The identity then follows from the linearity of $\mathcal G$ and
$\Delta_k^{\rm cen}=\Phi(W_k;W_k)-\mu(W_k)\bar I$.
\end{proof}

For any $(\alpha,\rho)$, define
\begin{equation*}
 D_k^{\rm cor}(\alpha,\rho)
 :=\rho(1-\sigma)D_k^{\rm cen}
  +\sigma\psi D_k^{\rm mix}
  +\psi^2D_k^{\rm rem}.
\end{equation*}
By Proposition~\ref{prop:corrector-rhs-exact-basis} and the linearity of $\mathcal G$,
\begin{equation}
 \mathcal L(W_k)[D_k^{\rm cor}(\alpha,\rho)]
 =\bigl(O_{\mathcal F},-\Delta_k^{\rm comp}(\alpha,\rho)\bigr).
 \label{eq:corrector-direct-kkt-system}
\end{equation}
Thus, $D_k^{\rm cor}(\alpha,\rho)$ is also an HSD residual-invariant direction.

Once the KKT matrix has been factorized, solving systems with additional right-hand sides by reusing the existing factors is much less expensive than performing another factorization.
At each iteration, we factorize the matrix representation of $\mathcal L(W_k)$ only once and reuse its factors for the five algorithm-specific right-hand sides associated with
$\dot W_k$, $\ddot W_k$, $D_k^{\rm unit}$, $D_k^{\rm mix}$, and $D_k^{\rm rem}$.
The direction $D_k^{\rm cen}$ is reconstructed from \eqref{eq:center-identity-paper}, and the amplification-specific direction used by CA-OFAS below can also be reconstructed algebraically from $\ddot W_k$.
Therefore, changing the candidate parameters or testing an amplification candidate requires neither an additional KKT factorization nor an additional algorithm-specific linear-system solve.
Evaluating a candidate then requires only linear combinations of precomputed directions, computation of auxiliary scalars, and evaluation of the acceptance conditions.

\begin{remark}[Relation to the reduced KKT system in Clarabel.jl]
\label{rem:clarabel-reduced-kkt-correspondence}
The direct solver in Clarabel.jl does not directly factorize the full matrix representation of \eqref{eq:current-kkt-operator-definition}.
Instead, it eliminates the cone-slack directions and the HSD scalar coupling, forms a reduced symmetric KKT system, and applies a direct LDL factorization to the reduced matrix
\citep[Section~3]{GoulartChen2026}.
Once the NT scaling at the current iterate is fixed, the same reduced KKT matrix is used for
$\dot W_k$, $\ddot W_k$, $D_k^{\rm unit}$, $D_k^{\rm mix}$, and $D_k^{\rm rem}$; only the right-hand side changes.
Thus, the abstract one-factorization construction in \eqref{eq:current-kkt-operator-definition} can be implemented in Clarabel.jl by reusing the factors of one reduced KKT matrix for multiple right-hand sides.
\end{remark}

\subsection{Corrected Candidates and Candidate Selection with Theoretical Guarantees}
\label{subsec:ofas-mathematical-search}

We add the correction
$D=D_k^{\rm cor}(\alpha,\rho)$ in \eqref{eq:corrector-direct-kkt-system} to the point $W_k(\alpha)$ on the arc.
Since the complementarity row gives
$\mathcal B(W_k,D;W_k)=-\Delta_k^{\rm comp}(\alpha,\rho)$,
a quadratic expansion of $\Phi$ with the above $U_k(\alpha)$ gives
\begin{equation}
 \Phi(W_k(\alpha)+D;W_k)-T_k(\alpha,\rho)
 =\underbrace{\mathcal B(U_k(\alpha),D;W_k)}_{\text{arc--correction cross term}}
 +\underbrace{\Phi(D;W_k)}_{\text{quadratic correction term}}.
 \label{eq:corrector-exact-remaining-mismatch}
\end{equation}
Thus, the current-iterate KKT correction exactly removes the complementarity mismatch linearized at the current iterate.
The only remaining terms in \eqref{eq:corrector-exact-remaining-mismatch} are the cross term between the arc increment and the correction and the quadratic term of the correction.

For a candidate angle $\alpha$, define the target average complementarity by
\(\mu_{\rm tar}:=(1-\sigma)\mu(W_k)\).
For fixed $(\alpha,\rho)$, if
\begin{equation}
 \mu\!\left(
 W_k(\alpha)+D_k^{\rm cor}(\alpha,\rho)+\zeta D_k^{\rm unit}
 \right)=\mu_{\rm tar}
 \label{eq:zeta-defining-equation-method}
\end{equation}
has a unique solution, denote it by $\zeta_k(\alpha,\rho)$.
The scalar $\zeta_k$ is not an independent candidate parameter; it is an auxiliary scalar for adjusting the average complementarity.
Both $D_k^{\rm cor}$ and $D_k^{\rm unit}$ are HSD residual-invariant directions.
Lemma~\ref{lem:ofas-exact-update-identities}(ii) directly shows the orthogonality of residual-zero directions.
Hence, both the quadratic complementarity term of each residual-invariant direction and the cross complementarity term between two such directions vanish.
Therefore, the average complementarity $\mu$ is exactly linear in $\zeta$.
With $V_0:=W_k(\alpha)+D_k^{\rm cor}(\alpha,\rho)$, we can then compute
\begin{equation}
 \zeta_k(\alpha,\rho)
 =\frac{\mu_{\rm tar}-\mu(V_0)}
 {\mu(V_0+D_k^{\rm unit})-\mu(V_0)}
 \label{eq:zeta-direct-evaluation-method}
\end{equation}
directly when the denominator is nonzero.
Section~\ref{sec:complexity} verifies this linearity and uniqueness.
We further define
\(D_k^{\rm full}(\alpha,\rho)
 :=D_k^{\rm cor}(\alpha,\rho)+\zeta_k(\alpha,\rho)D_k^{\rm unit}\)
and use
\begin{equation}
 W^+(\alpha,\rho,\beta)
 :=W_k(\alpha)+\beta D_k^{\rm full}(\alpha,\rho),
 \qquad 0<\beta\le1
 \label{eq:corrected-point-paper}
\end{equation}
as the corrected candidate.
The choice $\beta=1$ gives the full correction, while $0<\beta<1$ gives a damped correction.
Thus, $\beta$ controls only the amount of correction.

Candidate selection uses only quantities that can be evaluated directly from an actual candidate point.
First, set $\omega_0:=1$ and $\omega_{k+1}:=(1-\sin\alpha_k)\omega_k$.
For an HSD interior point $W$ and $\omega>0$, define
$\theta(W,\omega):=\mu(W)/\omega$ and
\(\mathcal J(W,\omega):=\max\{\omega,\omega/\theta(W,\omega),
\omega/\theta(W,\omega)^2\}\).
The quantity $\mathcal J$ is computed directly from the candidate point and the known value of $\omega$.
Section~\ref{sec:complexity} also uses it to control the scale of the projective normalization.
For a candidate angle with $\sigma=\sin\alpha$, set $\omega^+:=(1-\sigma)\omega_k$.

Fix $\sigma_{\max}\in(0,1)$, and let
$\sigma^{\rm pre}\in(0,\sigma_{\max}]$ be the minimum value of $\sigma=\sin\alpha$ allowed for preliminary candidates.
The threshold $\sigma^{\rm pre}$ is a fixed positive constant, independent of $\nu$, used to separate preliminary candidates from the successive-halving fallback search. The analysis requires only that this threshold be positive, not any particular numerical value.
We also set the centering-recovery parameter used in the successive-halving fallback search to $\rho_0:=1/8$, and define
$\mathcal P_\rho:=\{1,1/2,1/4,\rho_0\}$ and
$\mathcal W_{\rm HSD}:=\Ssym_+^n\times\R^m\times\Ssym_+^n\times\R_+\times\R_+$.
A candidate triple $(\alpha,\rho,\beta)$ is called \emph{OFAS-admissible} if all of the following four conditions hold.
\begin{enumerate}[label=(C\arabic*)]
 \item \textbf{Well-definedness:}
 $\mu_{\rm tar}>0$, and \eqref{eq:zeta-defining-equation-method} has a unique solution $\zeta_k(\alpha,\rho)$.
 \item \textbf{Interiority:}
 $W^+(\alpha,\rho,\beta)\in\operatorname{int}\mathcal W_{\rm HSD}$ and $\mu(W^+)>0$.
 \item \textbf{Neighborhood condition:}
 \(\lambda_{\min}(\Phi(W^+;W_k))\ge\gamma\mu(W^+)\).
 \item \textbf{Local progress:}
 \begin{equation*}
  \mathcal Q(W^+)\le\left(1-\frac{\sigma}{4}\right)\mathcal Q(W_k),
  \qquad
  \mathcal J(W^+,\omega^+)\le
  \left(1-\frac{\sigma}{4}\right)\mathcal J(W_k,\omega_k).
 \end{equation*}
\end{enumerate}
Condition (C3), together with Lemma~\ref{lem:current-frame-certificate}, guarantees that $W^+\in\Nlong(\gamma)$.
Condition (C4) is a local progress condition introduced to obtain a uniform contraction factor and an iteration bound.
The three-term decomposition, the current-iterate correction basis, and the candidate construction in \eqref{eq:corrected-point-paper} do not depend on (C4).
Moreover, the one-factorization reconstruction itself does not require evaluation of $\mathcal J$.
The numerical implementation in Section~\ref{sec:numerical} uses the same candidate formulas and conditions (C1)--(C4), together with ordered preliminary candidates and a finite safeguarded version of the fallback search.
Section~\ref{sec:numerical} lists the numerical search safeguards and fixed parameter values used to make the candidate search more efficient.
Since (C4) uses only quantities computed from the candidate point, checking it requires neither an additional KKT factorization nor an additional algorithm-specific linear-system solve.

Candidate selection has two stages.
First, we evaluate at most $N^{\rm pre}=O(1)$ ordered preliminary candidates satisfying
$\sigma^{\rm pre}\le\sin\alpha\le\sigma_{\max}$, $\rho\in\mathcal P_\rho$, and $0<\beta\le1$, and accept the first admissible candidate.
The preliminary rule is implementation-oriented; the worst-case analysis below uses only that at most $N^{\rm pre}=O(1)$ preliminary candidates are tested before the fallback search.
If no such candidate is accepted, we fix $\rho=\rho_0$ and use the dyadic fallback grid
{
\[
 \sigma_j:=\sigma_{\max}2^{-j},\qquad
 \alpha_j:=\arcsin\sigma_j,\qquad
 \beta_\ell:=2^{-\ell},\qquad j,\ell=0,1,\ldots .
\]
The fallback candidates are examined in stages as specified in Algorithm~\ref{alg:ofas}.}
In Section~\ref{sec:complexity}, we show that this search reaches an admissible candidate with
$\sigma=\Theta(1/\nu)$, $\beta=\Theta(1/\nu)$, and $\rho=\rho_0$.

\subsection{The OFAS Algorithm}

Algorithm~\ref{alg:ofas} summarizes \OFAS{} with the candidate-selection rule above.
In Section~\ref{sec:numerical}, we implement and evaluate this algorithm on the HSD interior-point framework of Clarabel.jl.
For the original SDP, termination is based on the accuracy test
$\mathcal Q_{\rm SDP}(W_k)\le\delta$.
Under Assumption~\ref{ass:primal-dual-solvability}, Subsection~\ref{subsec:normalization-solvable-case} shows that this condition is satisfied after finitely many iterations.
In contrast, neither the HSD candidate selection nor the contraction analysis of the homogeneous progress measure $\mathcal Q$ uses this assumption.
\begin{algorithm}[H]
\caption{One-Factorization Arc-Search Method (OFAS)}
\label{alg:ofas}
\begin{algorithmic}[1]
\Require SDP data $(\A,b,C)$, tolerance $\delta>0$, $\gamma\in(0,1)$,
         $0<\sigma^{\rm pre}\le\sigma_{\max}<1$, and a preliminary candidate rule
\State $W_0\gets(I,0,I,1,1)$, $\omega_0\gets1$, $k\gets0$
\While{$\mathcal Q_{\rm SDP}(W_k)>\delta$}
  \State Compute the current-iterate NT scaling and factorize the corresponding KKT matrix once
  \State Reuse the same factorization for the five right-hand sides defining
  $\dot W_k,\ddot W_k,D_k^{\rm unit},D_k^{\rm mix},D_k^{\rm rem}$; see \eqref{eq:first-derivative-full-kkt-paper}--\eqref{eq:second-order-full-kkt-paper}, \eqref{eq:unit-complementarity-direction}, and \eqref{eq:current-corrector-solution-map}.
  \State Reconstruct $D_k^{\rm cen}$ from \eqref{eq:center-identity-paper}
  {
  \State Test the ordered preliminary candidates from Subsection~\ref{subsec:ofas-mathematical-search}
  \State If none is admissible, set $\rho=\rho_0$ and, for $m=0,1,\ldots$, test the fallback pairs $(j,\ell)$ satisfying $\max\{j,\ell\}=m$, in increasing order of $j$ and then $\ell$, until an admissible candidate is found
  }
  \State Set the accepted triple to $(\alpha_k,\rho_k,\beta_k)$ and
  $W_{k+1}\gets W^+(\alpha_k,\rho_k,\beta_k)$
  \State $\omega_{k+1}\gets(1-\sin\alpha_k)\omega_k$, $k\gets k+1$
\EndWhile
\State \Return $W_k$
\end{algorithmic}
\end{algorithm}

\FloatBarrier
\subsection{CA-OFAS: Curvature-Amplified Candidates}
\label{subsec:caofas-method}

Fix an iteration $k$.
For quantities specific to the OFAS and amplified candidates, we omit the iteration index $k$ when there is no risk of confusion.
We retain $k$ for directions computed at the current iterate, complementarity targets, and quantities used to decide whether to evaluate the amplification branch, so that their relation to the current iterate remains explicit.

At each iteration, \CAOFAS{} first obtains an OFAS candidate
$(\alpha_{\rm OFAS},\rho_{\rm OFAS},\beta_{\rm OFAS})$ accepted by the \OFAS{} candidate-selection rule, together with the corresponding point $W_{\rm OFAS}^+$.
It then evaluates a different family of curvature-amplified arcs using the same current-iterate KKT factorization and correction basis.
Here, $\alpha_{\rm OFAS}$ is the accepted angle obtained from the \OFAS{} candidate-selection rule; it is not the maximum admissible angle on the regular arc $W_k(\alpha)$.
Therefore, the progress guarantee of \OFAS{} is preserved even if no amplified candidate is accepted.

Let $\sigma_{\rm OFAS}:=\sin\alpha_{\rm OFAS}$.
For a fixed amplification factor $\eta>1$ and increment $\Delta_\sigma>0$, define
\begin{equation*}
 \sigma_{\rm CA}:=\min\{\sigma_{\max},\sigma_{\rm OFAS}+\Delta_\sigma\},
 \qquad
 \alpha_{\rm CA}:=\arcsin\sigma_{\rm CA},
 \qquad
 \psi_{\rm CA}:=1-\cos\alpha_{\rm CA}.
\end{equation*}
The value $\sigma_{\rm CA}$ is a trial value for testing an amplified candidate with greater progress than the OFAS candidate.
The amplified candidate does not simply extend $\alpha_{\rm CA}$ farther along the regular arc; instead, it uses a different arc family in which the second-order direction is amplified by $\eta>1$.
We represent the decision to evaluate the amplification branch by $e_k^{\rm amp}\in\{0,1\}$.
This quantity is determined only from values already computed at the current iterate. The activation condition is an implementation choice and does not enter the complexity analysis.

If $e_k^{\rm amp}=1$ and $\sigma_{\rm CA}>\sigma_{\rm OFAS}$, define the amplified arc by
\begin{equation}
 W_{{\rm arc},{\rm CA}}
 :=W_k-\sigma_{\rm CA}\dot W_k+\eta\psi_{\rm CA}\ddot W_k.
 \label{eq:ca-amplified-arc}
\end{equation}
This curve is not obtained by simply extending the regular arc $W_k(\alpha)$ beyond $\alpha_{\rm OFAS}$.
Since $\ddot W_k$ is an HSD residual-invariant direction, this change requires no additional KKT linear-system solve.
The residual reduction factor of the candidate remains $1-\sigma_{\rm CA}$.
For the centering-recovery parameter, we use the OFAS value $\rho_{\rm OFAS}$.

When $\eta\ne1$, simply scaling the \OFAS{} three-term decomposition by $\eta$ is not sufficient because an additional complementarity mismatch proportional to $\Phi(-\dot W_k;W_k)$ appears.
Define
\begin{equation}
 T_{k,{\rm CA}}
 :=(1-\sigma_{\rm CA})\{(1-\rho_{\rm OFAS})\Phi(W_k;W_k)
                  +\rho_{\rm OFAS}\mu(W_k)\bar I\}
 \label{eq:ca-target-complementarity}
\end{equation}
and
\(\Delta_{\rm CA}^{\rm comp}:=\Phi(W_{{\rm arc},{\rm CA}};W_k)-T_{k,{\rm CA}}\).
Also define
\(c_{\rm CA}^{\rm amp}:=(\eta-1)\psi_{\rm CA}\{(\eta+1)\psi_{\rm CA}-2\}\).
The following proposition gives the additional term caused by the amplification.

\begin{proposition}[Exact decomposition of the amplified complementarity mismatch]
\label{prop:ca-amplified-mismatch}
Using $\Delta_k^{\rm cen}$, $\Delta_k^{\rm mix}$, and $\Delta_k^{\rm rem}$ defined in
\eqref{eq:corrector-basis-rhs-definitions}, we have
\begin{equation}
\begin{aligned}
 \Delta_{\rm CA}^{\rm comp}
 &=\rho_{\rm OFAS}(1-\sigma_{\rm CA})\Delta_k^{\rm cen}
   +\eta\sigma_{\rm CA}\psi_{\rm CA}\Delta_k^{\rm mix}
   +\eta^2\psi_{\rm CA}^2\Delta_k^{\rm rem}
   +\Delta_{\rm CA}^{\rm amp},\\
 \Delta_{\rm CA}^{\rm amp}
 &:=c_{\rm CA}^{\rm amp}\Phi(-\dot W_k;W_k).
\end{aligned}
\label{eq:ca-amplified-mismatch-decomposition}
\end{equation}
Therefore, the only additional term beyond the \OFAS{} three-term decomposition is $\Delta_{\rm CA}^{\rm amp}$.
In particular, when $\eta=1$, we have $c_{\rm CA}^{\rm amp}=0$, and the expression reduces to the \OFAS{} three-term decomposition for the same $(\alpha_{\rm CA},\rho_{\rm OFAS})$.
\end{proposition}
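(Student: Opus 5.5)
We mimic the proof of Lemma~\ref{lem:current-arc-quadratic-decomposition}, treating the amplified arc as a generic point of the form $W_k+aU_1+bU_2$, where $U_1:=-\dot W_k$ and $U_2:=\ddot W_k$. Here $a:=\sigma_{\rm CA}$ and $b:=\eta\psi_{\rm CA}$. Since $\Phi(\cdot;W_k)$ is quadratic with symmetric bilinear cross term $\mathcal B(\cdot,\cdot;W_k)$, we first expand
\[
 \Phi(W_{{\rm arc},{\rm CA}};W_k)
 =\Phi(W_k;W_k)+a\,\mathcal B(W_k,U_1;W_k)+b\,\mathcal B(W_k,U_2;W_k)
 +a^2\Phi(U_1;W_k)+ab\,\mathcal B(U_1,U_2;W_k)+b^2\Phi(U_2;W_k).
\]
We then substitute the two identities \eqref{eq:pred-second-current-identities}, namely $\mathcal B(W_k,U_1;W_k)=-\Phi(W_k;W_k)$ and $\mathcal B(W_k,U_2;W_k)=-2\Phi(U_1;W_k)$. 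This gives
\[
 \Phi(W_{{\rm arc},{\rm CA}};W_k)=(1-a)\Phi(W_k;W_k)+(a^2-2b)\Phi(U_1;W_k)+ab\,\Delta_k^{\rm mix}+b^2\Phi(U_2;W_k).
\]

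Next we rewrite the last group of terms in the OFAS basis. We write $b^2\Phi(U_2;W_k)=b^2\Delta_k^{\rm rem}+b^2\Phi(U_1;W_k)$, so that the total coefficient of $\Phi(U_1;W_k)$ becomes $a^2-2b+b^2$. We then use $\sigma_{\rm CA}^2=2\psi_{\rm CA}-\psi_{\rm CA}^2$, which holds because $\cos\alpha_{\rm CA}=1-\psi_{\rm CA}$. This yields
\[
 a^2-2b+b^2=2\psi_{\rm CA}-\psi_{\rm CA}^2-2\eta\psi_{\rm CA}+\eta^2\psi_{\rm CA}^2
 =(\eta^2-1)\psi_{\rm CA}^2-2(\eta-1)\psi_{\rm CA}.
\]
The right-hand side factors as $(\eta-1)\psi_{\rm CA}\{(\eta+1)\psi_{\rm CA}-2\}=c_{\rm CA}^{\rm amp}$. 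This coefficient check is the only step that requires care, and it is the main point at which an error could slip in.

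Finally we subtract the target. As in the proof of Proposition~\ref{prop:corrector-rhs-exact-basis}, we rewrite
\[
 T_{k,{\rm CA}}=(1-\sigma_{\rm CA})\Phi(W_k;W_k)-\rho_{\rm OFAS}(1-\sigma_{\rm CA})\Delta_k^{\rm cen}.
\]
Subtracting, the terms $(1-\sigma_{\rm CA})\Phi(W_k;W_k)$ cancel and leave $\rho_{\rm OFAS}(1-\sigma_{\rm CA})\Delta_k^{\rm cen}$. Together with $ab=\eta\sigma_{\rm CA}\psi_{\rm CA}$ and $b^2=\eta^2\psi_{\rm CA}^2$, this gives \eqref{eq:ca-amplified-mismatch-decomposition}. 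The remark for $\eta=1$ follows immediately: the factor $\eta-1$ makes $c_{\rm CA}^{\rm amp}=0$, and the remaining coefficients reduce to those of Proposition~\ref{prop:corrector-rhs-exact-basis} with $(\alpha,\rho)=(\alpha_{\rm CA},\rho_{\rm OFAS})$.
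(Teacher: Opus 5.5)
Your proposal is correct and matches the paper's proof essentially step for step. You expand the amplified arc quadratically using \eqref{eq:pred-second-current-identities}, simplify the coefficient $\sigma_{\rm CA}^2-2\eta\psi_{\rm CA}+\eta^2\psi_{\rm CA}^2$ of $\Phi(-\dot W_k;W_k)$ to $c_{\rm CA}^{\rm amp}$ via $\sigma_{\rm CA}^2=2\psi_{\rm CA}-\psi_{\rm CA}^2$, and subtract the rewritten target $T_{k,{\rm CA}}$. You simply write out the intermediate expansion more explicitly than the paper does.
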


\begin{proof}
Expanding \eqref{eq:ca-amplified-arc} quadratically and using
\eqref{eq:pred-second-current-identities} and \eqref{eq:corrector-basis-rhs-definitions} gives
\[
\begin{aligned}
 \Phi(W_{{\rm arc},{\rm CA}};W_k)
 &=(1-\sigma_{\rm CA})\Phi(W_k;W_k)
 +\eta\sigma_{\rm CA}\psi_{\rm CA}\Delta_k^{\rm mix}
 +\eta^2\psi_{\rm CA}^2\Delta_k^{\rm rem}\\
 &\quad+\{\sigma_{\rm CA}^2-2\eta\psi_{\rm CA}+\eta^2\psi_{\rm CA}^2\}\Phi(-\dot W_k;W_k).
\end{aligned}
\]
Since $\sigma_{\rm CA}^2=2\psi_{\rm CA}-\psi_{\rm CA}^2$, the last coefficient is
$(\eta-1)\psi_{\rm CA}\{(\eta+1)\psi_{\rm CA}-2\}=c_{\rm CA}^{\rm amp}$.
On the other hand, \eqref{eq:ca-target-complementarity} can be written as
$T_{k,{\rm CA}}=(1-\sigma_{\rm CA})\Phi(W_k;W_k)-\rho_{\rm OFAS}(1-\sigma_{\rm CA})\Delta_k^{\rm cen}$.
Taking the difference gives \eqref{eq:ca-amplified-mismatch-decomposition}.
\end{proof}

By Proposition~\ref{prop:ca-amplified-mismatch}, the same directions
$D_k^{\rm cen}$, $D_k^{\rm mix}$, and $D_k^{\rm rem}$ can be reused for the amplified correction.
Thus, the only new algebraic quantity in \CAOFAS{} is the response to the amplification-specific term $\Delta_{\rm CA}^{\rm amp}$.
This response can also be reconstructed from the already computed direction $\ddot W_k$.

\begin{lemma}[Algebraic reconstruction of the amplification-specific correction]
\label{lem:ca-amp-direction-solve-free}
\begin{equation}
 D_{k,{\rm CA}}^{\rm amp}
 :=\mathcal G(\Delta_{\rm CA}^{\rm amp};W_k)
 =\frac{c_{\rm CA}^{\rm amp}}{2}\,\ddot W_k.
 \label{eq:ca-amplification-specific-direction}
\end{equation}
Therefore, no additional linear-system solve is required for $D_{k,{\rm CA}}^{\rm amp}$.
\end{lemma}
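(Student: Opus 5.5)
The identity follows from the linearity of $\mathcal G(\cdot;W_k)$ combined with the second-order system \eqref{eq:second-order-full-kkt-paper}. By Lemma~\ref{lem:current-kkt-nonsingular-longstep}, $\mathcal L(W_k)$ is nonsingular. Hence it suffices to show that the candidate direction $\frac{c_{\rm CA}^{\rm amp}}{2}\ddot W_k$ satisfies the defining equation \eqref{eq:current-corrector-solution-map} with $\Delta=\Delta_{\rm CA}^{\rm amp}$ and $\widehat W=W_k$. Uniqueness then identifies it with $\mathcal G(\Delta_{\rm CA}^{\rm amp};W_k)$.

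The first step applies $\mathcal L(W_k)$ to $\ddot W_k$ using \eqref{eq:second-order-full-kkt-paper}:
\[
 \mathcal L(W_k)[\ddot W_k]=\bigl(O_{\mathcal F},-2\Phi(\dot W_k;W_k)\bigr).
\]
The second step uses the quadratic (even) structure of $\Phi(\cdot;W_k)$. Since $P$ and $R$ are linear, $\Phi(-\dot W_k;W_k)=P(-\dot W_k)\circ R(-\dot W_k)=P(\dot W_k)\circ R(\dot W_k)=\Phi(\dot W_k;W_k)$. Therefore $\mathcal L(W_k)[\ddot W_k]=(O_{\mathcal F},-2\Phi(-\dot W_k;W_k))$, which says exactly that $\ddot W_k=\mathcal G(2\Phi(-\dot W_k;W_k);W_k)$.

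The third step uses the linearity of $\mathcal G(\cdot;W_k)$ in $\Delta$, which was noted after \eqref{eq:current-corrector-solution-map}. Together with $\Delta_{\rm CA}^{\rm amp}=c_{\rm CA}^{\rm amp}\Phi(-\dot W_k;W_k)$, this gives
\[
 \mathcal G(\Delta_{\rm CA}^{\rm amp};W_k)=\frac{c_{\rm CA}^{\rm amp}}{2}\,\mathcal G\bigl(2\Phi(-\dot W_k;W_k);W_k\bigr)=\frac{c_{\rm CA}^{\rm amp}}{2}\,\ddot W_k .
\]
The scalar $c_{\rm CA}^{\rm amp}$ depends only on $\eta$ and $\psi_{\rm CA}$. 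Consequently, $D_{k,{\rm CA}}^{\rm amp}$ is a scalar multiple of an already computed direction, and no additional linear-system solve is required.

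None of these steps is a real obstacle. The only point needing care is the sign bookkeeping: the right-hand side of \eqref{eq:current-corrector-solution-map} is $-\Delta$, while that of \eqref{eq:second-order-full-kkt-paper} is $-2\Phi(\dot W_k;W_k)$, and the evenness $\Phi(-\dot W_k;W_k)=\Phi(\dot W_k;W_k)$ is what makes the two consistent. Once this is checked, the factor $1/2$ and the positive sign in \eqref{eq:ca-amplification-specific-direction} follow directly.
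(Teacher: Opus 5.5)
Your proof is correct and follows essentially the same route as the paper. Both arguments rest on the second-order system \eqref{eq:second-order-full-kkt-paper}, the evenness $\Phi(-\dot W_k;W_k)=\Phi(\dot W_k;W_k)$, and the uniqueness of the correction-response map. The only difference is that the paper applies linearity to $\mathcal L(W_k)$ where you apply it to $\mathcal G(\cdot;W_k)$, which is immaterial.
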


\begin{proof}
Equation~\eqref{eq:second-order-full-kkt-paper} gives
$\mathcal L(W_k)[\ddot W_k]=(O_{\mathcal F},-2\Phi(\dot W_k;W_k))$.
Using $\Phi(-\dot W_k;W_k)=\Phi(\dot W_k;W_k)$ and the linearity of $\mathcal L(W_k)$, \eqref{eq:second-order-full-kkt-paper} gives
\[
 \mathcal L(W_k)\left[\frac{c_{\rm CA}^{\rm amp}}{2}\ddot W_k\right]
 =\bigl(O_{\mathcal F},-c_{\rm CA}^{\rm amp}\Phi(-\dot W_k;W_k)\bigr)
 =\bigl(O_{\mathcal F},-\Delta_{\rm CA}^{\rm amp}\bigr).
\]
Therefore, the uniqueness of the correction-response map gives \eqref{eq:ca-amplification-specific-direction}.
\end{proof}

Combining the correction directions corresponding to the components of the amplified complementarity mismatch with the same coefficients gives the amplified corrector direction
\begin{equation}
\begin{aligned}
 D_{k,{\rm CA}}^{\rm cor}
 &=\rho_{\rm OFAS}(1-\sigma_{\rm CA})D_k^{\rm cen}
  +\eta\sigma_{\rm CA}\psi_{\rm CA} D_k^{\rm mix}
  +\eta^2\psi_{\rm CA}^2D_k^{\rm rem}
  +\frac{c_{\rm CA}^{\rm amp}}{2}\ddot W_k.
\end{aligned}
 \label{eq:ca-amplified-corrector-direction}
\end{equation}
By linearity and Lemma~\ref{lem:ca-amp-direction-solve-free},
$\mathcal L(W_k)[D_{k,{\rm CA}}^{\rm cor}]
 =\bigl(O_{\mathcal F},-\Delta_{\rm CA}^{\rm comp}\bigr)$.
Therefore, the amplification branch reuses the current-iterate KKT factorization and the five linear-system solves already performed for \OFAS{}.

The average complementarity is adjusted in the same way as in \OFAS{}.
First, define $\mu_{{\rm tar},{\rm CA}}:=(1-\sigma_{\rm CA})\mu(W_k)$.
If the equation in the scalar $\zeta$
\begin{equation}
 \mu\!\left(W_{{\rm arc},{\rm CA}}+D_{k,{\rm CA}}^{\rm cor}+\zeta D_k^{\rm unit}\right)
 =\mu_{{\rm tar},{\rm CA}}
 \label{eq:ca-zeta-definition}
\end{equation}
has a unique solution, denote it by $\zeta_{\rm CA}$.
The scalar $\zeta_{\rm CA}$ can also be computed directly by the same two-point linear formula as in \eqref{eq:zeta-direct-evaluation-method}, with no additional KKT linear-system solve.
If the solution is not unique, we do not construct the amplified candidate.
Next, for a damping factor $\beta_{\rm CA}\in(0,1]$, define the corrected amplified candidate by
\begin{equation*}
 W_{\rm CA}^+(\beta_{\rm CA})
 :=W_{{\rm arc},{\rm CA}}
   +\beta_{\rm CA}\left(D_{k,{\rm CA}}^{\rm cor}+\zeta_{\rm CA} D_k^{\rm unit}\right).
\end{equation*}
For the amplification branch, conditions (C1)--(C4) are evaluated by replacing the corresponding \OFAS{} quantities with
$\mu_{{\rm tar},{\rm CA}}$, $\zeta_{\rm CA}$, $W_{\rm CA}^+(\beta_{\rm CA})$, and
$\omega_{\rm CA}^+:=(1-\sigma_{\rm CA})\omega_k$.
We evaluate, in a prescribed order, at most $N^{\rm amp}=O(1)$ damping values fixed in advance, and accept the first $\beta_{\rm CA}$ for which the amplified candidate satisfies conditions (C1)--(C4).
If no such $\beta_{\rm CA}$ exists, we use the already accepted OFAS point $W_{\rm OFAS}^+$.

In summary, \CAOFAS{} first obtains the guaranteed OFAS candidate of Algorithm~\ref{alg:ofas} and then tests the amplification branch before updating $W_{\rm OFAS}^+$.
An amplified candidate is accepted only if it also satisfies conditions (C1)--(C4); otherwise, the method uses the OFAS candidate.
Therefore, Section~\ref{sec:complexity} can establish the same invariant conditions and iteration bound as for \OFAS{}.
The numerical implementation in Section~\ref{sec:numerical} combines the same amplified-candidate construction with implementation-oriented search rules and safeguards.
Rather than repeating the complete procedure, Algorithm~\ref{alg:caofas} lists only the changes from \OFAS{}.

\begin{algorithm}[H]
\footnotesize
\caption{\CAOFAS{}: Changes from \OFAS{}}
\label{alg:caofas}
\begin{algorithmic}[1]
\Require The inputs of Algorithm~\ref{alg:ofas}, an amplification factor $\eta>1$, an increment $\Delta_\sigma>0$,
         an activation condition, and at most $N^{\rm amp}=O(1)$ prescribed damping values for amplification
\State Run the current \OFAS{} iteration until an admissible OFAS candidate $(\alpha_{\rm OFAS},\rho_{\rm OFAS},\beta_{\rm OFAS})$ and $W_{\rm OFAS}^+$ are obtained, and postpone the update
\State Set $\sigma_{\rm OFAS}\gets\sin\alpha_{\rm OFAS}$ and $\sigma_{\rm CA}\gets\min\{\sigma_{\max},\sigma_{\rm OFAS}+\Delta_\sigma\}$, and evaluate the amplification activation condition
\If{amplification is activated and $\sigma_{\rm CA}>\sigma_{\rm OFAS}$}
  \State Construct $W_{{\rm arc},{\rm CA}}$ and $D_{k,{\rm CA}}^{\rm cor}$ from the directions already computed in the \OFAS{} iteration using \eqref{eq:ca-amplified-arc}, \eqref{eq:ca-amplification-specific-direction}, and \eqref{eq:ca-amplified-corrector-direction}
  \State If \eqref{eq:ca-zeta-definition} has a unique solution $\zeta_{\rm CA}$, evaluate the prescribed amplification damping values $\beta_{\rm CA}$ in order
  \State If any $W_{\rm CA}^+(\beta_{\rm CA})$ satisfies (C1)--(C4), accept the first such candidate as $W_{k+1}$
\EndIf
\State If no amplified candidate is accepted, set $W_{k+1}\gets W_{\rm OFAS}^+$
\State If an amplified candidate was accepted, set $\omega_{k+1}\gets(1-\sigma_{\rm CA})\omega_k$; otherwise set $\omega_{k+1}\gets(1-\sigma_{\rm OFAS})\omega_k$, and proceed to the next iteration
\end{algorithmic}
\end{algorithm}
\FloatBarrier

\section{Long-Step Analysis of OFAS and CA-OFAS}
\label{sec:complexity}

In this section, we give a long-step analysis of \OFAS{} and \CAOFAS{} using the candidate-selection rule defined in Algorithms~\ref{alg:ofas} and~\ref{alg:caofas} of Section~\ref{sec:methods}.
Throughout this section, $\varepsilon>0$ denotes the target accuracy for the homogeneous progress measure, and $\delta>0$ denotes the normalized KKT accuracy for the original SDP.
Our goal is to preserve strict interiority, membership in the wide long-step neighborhood, and uniform progress without solving a new corrector system at the accepted point. The candidate-dependent correction is instead reconstructed in the NT coordinates of the current iterate.
The homogeneous analysis has four parts. We first derive exact identities for the OFAS update and a certificate in the current-iterate NT coordinates. We then establish NT direction estimates in the wide long-step neighborhood. Next, we construct a uniform admissible window with $\sin\alpha=\Theta(1/\nu)$ and $\beta=\Theta(1/\nu)$. Finally, we prove finite termination of the candidate search and an iteration bound of $O\!\left(\nu\log(1/\varepsilon)\right)$.
We then use projective normalization to evaluate the accuracy for the original SDP.

We use the notation of Sections~\ref{sec:framework}--\ref{sec:methods}.
Assumption~\ref{ass:surjectivity} is used for unique solvability of the KKT operator at the current iterate.
Assumption~\ref{ass:primal-dual-solvability}, however, is not used until Subsection~\ref{subsec:normalization-solvable-case}; it is imposed only when recovering a solution of the original SDP by projective normalization.
For the standard initial point $W_0$, let $\mathcal Q_0:=\mathcal Q(W_0)$.
The condition $\mathcal Q(W)\le\varepsilon$ measures the accuracy of the homogeneous HSD quantities and is distinguished from the stopping condition $\mathcal Q_{\rm SDP}(W)\le\delta$ for the original SDP.

To construct the uniformly guaranteed window, we use $\rho_0=1/8$ defined in Subsection~\ref{subsec:ofas-mathematical-search}.
If all preliminary candidates are rejected, the method fixes $\rho=\rho_0$ and moves to the successive-halving fallback search in the angle and damping. The particular outcomes for the preliminary values $\rho\in\mathcal P_\rho=\{1,1/2,1/4,\rho_0\}$ therefore do not affect the worst-case argument.
Therefore, for the worst-case analysis, it is enough to show that this successive-halving fallback search contains an admissible candidate with $\sin\alpha=\Theta(1/\nu)$ and $\beta=\Theta(1/\nu)$.

Below, we analyze the number of outer iterations and use the results of Section~\ref{sec:methods} for the numbers of matrix factorizations and linear-system solves per iteration.
As in \citet{YangYamashita2018}, the basic proof structure consists of bounds on products of directions, the existence of an $O(1/\nu)$ step, and a uniform cumulative decrease rate.

\subsection{Exact Identities for the OFAS Update and a Current-Iterate Certificate}
\label{subsec:exact-identities}

In the following local calculations, we focus on iteration $k$ and write $\dot W:=\dot W_k$ and $\ddot W:=\ddot W_k$ for brevity.
We also set $\sigma:=\sin\alpha$ and $\psi:=1-\cos\alpha$.
The HSD residual map $\Hres$ is linear. The first- and second-order directions satisfy $\Hres(\dot W)=\Hres(W)$ and $\Hres(\ddot W)=0$, respectively.
Define the total complementarity, combining the PSD block and the scalar HSD block, by
$\mathcal C(W):=X\bullet S+\tau\kappa=\nu\mu(W)$.
For HSD variations $U=(U_X,u_y,U_S,u_\tau,u_\kappa)$ and $V=(V_X,v_y,V_S,v_\tau,v_\kappa)$, define the polarization of $\mathcal C$ by
\begin{equation*}
 \Gamma(U,V):=U_X\bullet V_S+V_X\bullet U_S
   +u_\tau v_\kappa+v_\tau u_\kappa.
\end{equation*}
Then $\mathcal C(U+V)=\mathcal C(U)+\Gamma(U,V)+\mathcal C(V)$.
Here, $\Gamma$ is the cross term of the scaling-independent scalar complementarity $\mathcal C$. This is different from the Jordan-product cross term $\mathcal B(\cdot,\cdot;W_k)$ in the current-iterate NT coordinates; see Section~\ref{sec:framework}.

\begin{lemma}[Exact identities for the OFAS update]
\label{lem:ofas-exact-update-identities}
The following statements hold.
\begin{enumerate}[label=(\roman*)]
 \item The predictor arc $W_k(\alpha)=W-\sigma\dot W+\psi\ddot W$ satisfies
 $\Hres(W_k(\alpha))=(1-\sigma)\Hres(W)$.
 This identity is unchanged if the second-order direction is multiplied by any scalar.

 \item For any residual-invariant direction $D=(D_X,d_y,D_S,d_\tau,d_\kappa)$ satisfying $\Hres(D)=0$,
 $D_X\bullet D_S+d_\tau d_\kappa=0$.
 Moreover, for any two residual-invariant directions $D_1,D_2$, we have $\Gamma(D_1,D_2)=0$.
 {Here $D$ denotes an arbitrary residual-invariant direction.}

 \item Suppose that $D_k^{\rm cor}(\alpha,\rho)$ and $D_k^{\rm unit}$ are residual-invariant directions, and define
 \begin{align*}
   \mu_{\rm tar}&:=(1-\sigma)\mu(W), \\
   D_\zeta&:=\Gamma(W_k(\alpha),D_k^{\rm unit}),\\
   N_\zeta&:=\nu\mu_{\rm tar}-\mathcal C(W_k(\alpha))
       -\Gamma(W_k(\alpha),D_k^{\rm cor}(\alpha,\rho)).
 \end{align*}
 If $D_\zeta\neq0$, then
 $
   \zeta=\frac{N_\zeta}{D_\zeta}
 $
 is the unique scalar for which the fully corrected point corresponding to $\beta=1$ in \eqref{eq:corrected-point-paper},
 $
  W_k(\alpha)+D_k^{\rm cor}(\alpha,\rho)+\zeta D_k^{\rm unit},
 $
 satisfies
 $
  \mu\!\left(W_k(\alpha)+D_k^{\rm cor}(\alpha,\rho)+\zeta D_k^{\rm unit}\right)=\mu_{\rm tar}.
 $

 \item Let
 {
 \[
   D_k^{\rm full}(\alpha,\rho)
   :=D_k^{\rm cor}(\alpha,\rho)+\zeta D_k^{\rm unit},\qquad
   W^+(\beta):=W_k(\alpha)+\beta D_k^{\rm full}(\alpha,\rho),\qquad 0\le\beta\le1.
 \]
 }
 Then
 {
 \begin{align}
   \Hres(W^+(\beta))&=(1-\sigma)\Hres(W),
   \label{eq:stage2-residual-exact}\\
   \mu(W^+(\beta))&=(1-\beta)\mu(W_k(\alpha))
      +\beta(1-\sigma)\mu(W).
   \label{eq:stage2-mu-exact}
 \end{align}
 }
\end{enumerate}
\end{lemma}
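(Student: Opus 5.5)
We prove the four items in order; only (ii) needs a genuine argument, and the rest follow from linearity of $\Hres$ and bilinearity of $\Gamma$.

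\emph{Item (i).} Since $\Hres$ is linear, $\Hres(\dot W)=\Hres(W)$ by \eqref{eq:first-derivative-full-kkt-paper}, and $\Hres(\ddot W)=O_{\mathcal F}$ by \eqref{eq:second-order-full-kkt-paper}, we get
\[
\Hres(W_k(\alpha))=\Hres(W)-\sigma\Hres(W)+\psi\cdot O_{\mathcal F}=(1-\sigma)\Hres(W).
\]
The second-order term contributes nothing whatever its scalar coefficient, which also covers the amplified arc \eqref{eq:ca-amplified-arc}.

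\emph{Item (ii).} This is the skew-symmetry step. Let $\Hres(D)=0$. Then $\A(D_X)=bd_\tau$ and $\Ast(d_y)+D_S=Cd_\tau$. Take the inner product of the second identity with $D_X$:
\[
D_X\bullet D_S=d_\tau\, C\bullet D_X-d_y^T\A(D_X)=d_\tau\,(C\bullet D_X-b^Td_y).
\]
The third residual gives $C\bullet D_X-b^Td_y=-d_\kappa$. Hence $D_X\bullet D_S=-d_\tau d_\kappa$, which is the first claim. For two residual-invariant directions $D_1,D_2$, note that $D_1+D_2$ is also residual-invariant. Expanding $\mathcal C(D_1+D_2)=\mathcal C(D_1)+\Gamma(D_1,D_2)+\mathcal C(D_2)$, all three $\mathcal C$ values are zero by the first claim, so $\Gamma(D_1,D_2)=0$.

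\emph{Item (iii).} Set $V(\zeta):=W_k(\alpha)+D_k^{\rm cor}+\zeta D_k^{\rm unit}$. Expanding $\mathcal C$ bilinearly, the terms $\mathcal C(D_k^{\rm cor})$, $\mathcal C(D_k^{\rm unit})$ and $\Gamma(D_k^{\rm cor},D_k^{\rm unit})$ vanish by (ii). What remains is
\[
\nu\mu(V(\zeta))=\mathcal C(W_k(\alpha))+\Gamma(W_k(\alpha),D_k^{\rm cor})+\zeta\,\Gamma(W_k(\alpha),D_k^{\rm unit}).
\]
This is affine in $\zeta$ with slope $D_\zeta$. Setting it equal to $\nu\mu_{\rm tar}$ and using $D_\zeta\ne0$ gives the unique solution $\zeta=N_\zeta/D_\zeta$. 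In particular the two-point formula \eqref{eq:zeta-direct-evaluation-method} agrees with this $\zeta$.

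\emph{Item (iv).} Residual invariance of $D_k^{\rm full}$ together with (i) gives \eqref{eq:stage2-residual-exact}. For \eqref{eq:stage2-mu-exact}, expand
\[
\mathcal C(W_k(\alpha)+\beta D_k^{\rm full})=\mathcal C(W_k(\alpha))+\beta\,\Gamma(W_k(\alpha),D_k^{\rm full})+\beta^2\mathcal C(D_k^{\rm full}).
\]
The quadratic term vanishes by (ii). By (iii), the case $\beta=1$ gives $\Gamma(W_k(\alpha),D_k^{\rm full})=\nu\mu_{\rm tar}-\mathcal C(W_k(\alpha))$. Substituting this and dividing by $\nu$ yields $(1-\beta)\mu(W_k(\alpha))+\beta(1-\sigma)\mu(W)$.
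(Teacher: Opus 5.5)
Your proof is correct and follows essentially the same route as the paper: linearity of $\Hres$ for (i) and the residual part of (iv), the inner-product computation plus polarization of $\mathcal C$ for (ii), and for (iii)--(iv) the expansion of $\mathcal C$ that is affine in $\zeta$, followed by eliminating the cross term using the case $\beta=1$. Your extra remark that the two-point formula \eqref{eq:zeta-direct-evaluation-method} yields the same $\zeta$ is also valid, because $\mu$ is exactly affine in $\zeta$.
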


\begin{proof}
{
For (i), linearity of $\Hres$ gives
\[
 \Hres(W_k(\alpha))
 =\Hres(W)-\sigma\Hres(\dot W)+\psi\Hres(\ddot W)
 =(1-\sigma)\Hres(W).
\]
Multiplying $\ddot W$ by any scalar does not change this identity because
$\Hres(\ddot W)=0$.
}

For (ii), the orthogonality relation follows from the skew-adjoint structure of the HSD constraint system.
The same structure appears in the HSD formulation for LP \citep{YeToddMizuno1994}, but we derive it directly here from the SDP-HSD residual equations used in this paper.
The condition $\Hres(D)=0$ means
$\A(D_X)=bd_\tau$, $\Ast(d_y)+D_S=Cd_\tau$, and
$C\bullet D_X-b^Td_y+d_\kappa=0$.
Taking the inner product of the second equation with $D_X$ and using the first equation gives
\[
 D_X\bullet D_S
 =d_\tau(C\bullet D_X-b^Td_y)
 =-d_\tau d_\kappa.
\]
Hence, $\mathcal C(D)=0$.
Since the residual-invariance equations are linear, $D_1+D_2$ is also residual-invariant.
Therefore,
$0=\mathcal C(D_1+D_2)=\mathcal C(D_1)+\Gamma(D_1,D_2)+\mathcal C(D_2)$,
which gives $\Gamma(D_1,D_2)=0$.
This proof uses only the matrix inner product and adjoint relations; it is not specific to LP.

{
For (iii), both $D_k^{\rm cor}(\alpha,\rho)$ and $D_k^{\rm unit}$ are residual-invariant.
Part (ii) therefore gives
\[
 \mathcal C(D_k^{\rm cor}(\alpha,\rho))=\mathcal C(D_k^{\rm unit})=0,
 \qquad
 \Gamma(D_k^{\rm cor}(\alpha,\rho),D_k^{\rm unit})=0.
\]
Thus, for any scalar $\zeta$,
\[
 \mathcal C\!\left(D_k^{\rm cor}(\alpha,\rho)+\zeta D_k^{\rm unit}\right)=0.
\]
Moreover, the same quadratic expansion gives
\[
 \begin{aligned}
 &\mathcal C\!\left(W_k(\alpha)+D_k^{\rm cor}(\alpha,\rho)+\zeta D_k^{\rm unit}\right)\\
 &\quad=\mathcal C(W_k(\alpha))
 +\Gamma\!\left(W_k(\alpha),D_k^{\rm cor}(\alpha,\rho)\right)+\zeta D_\zeta.
 \end{aligned}
\]
Hence, the average complementarity after the full correction is exactly affine in $\zeta$.
Equating it to $\nu\mu_{\rm tar}$ gives $\zeta=N_\zeta/D_\zeta$.
}

{
For (iv), write $D^{\rm full}:=D_k^{\rm full}(\alpha,\rho)$.
By part~(ii), $\mathcal C(D^{\rm full})=0$. Hence, for every $0\le\beta\le1$,
\[
 \mathcal C(W_k(\alpha)+\beta D^{\rm full})
 =\mathcal C(W_k(\alpha))
   +\beta\Gamma(W_k(\alpha),D^{\rm full}).
\]
At $\beta=1$, part~(iii) gives
\[
 \mathcal C(W_k(\alpha))
 +\Gamma(W_k(\alpha),D^{\rm full})
 =\nu\mu_{\rm tar}.
\]
Eliminating the cross term between these two identities yields
\[
 \mathcal C(W_k(\alpha)+\beta D^{\rm full})
 =(1-\beta)\mathcal C(W_k(\alpha))+\beta\nu\mu_{\rm tar}.
\]
Dividing by $\nu$ gives \eqref{eq:stage2-mu-exact}.
Likewise, linearity of $\Hres$ and $\Hres(D^{\rm full})=0$ give
$\Hres(W_k(\alpha)+\beta D^{\rm full})=\Hres(W_k(\alpha))$, which is
\eqref{eq:stage2-residual-exact} by part~(i).
}
\end{proof}

This lemma does not require strict interiority of the predictor arc, so the same algebra also applies to the amplified arc of \CAOFAS{}.
However, positivity and membership in the wide long-step neighborhood for an amplified candidate are checked separately by the acceptance conditions.
Also, when $\beta<1$, in general $\mu(W^+)\neq(1-\sigma)\mu(W)$.
The iteration analysis below therefore does not rely on this equality.

The next lemma connects the current-iterate NT certificate in acceptance condition (C3) of Section~\ref{sec:methods} with the scaling-independent wide long-step neighborhood.

\begin{lemma}[Wide Long-Step Neighborhood Certificate in the Current-Iterate NT Coordinates]
\label{lem:current-frame-certificate}
For a strictly interior candidate $W_{\rm cand}$, if
 $\lambda_{\min}\!\bigl(\Phi(W_{\rm cand};W_k)\bigr)
 \ge\gamma\mu(W_{\rm cand})$,
then
 $W_{\rm cand}\in\Nlong(\gamma)$.
\end{lemma}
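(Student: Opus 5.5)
The plan is to verify the defining inequality of $\Nlong(\gamma)$ block by block. Strict interiority of $W_{\rm cand}$ is assumed, so it remains to show $h_i(W_{\rm cand})\ge\gamma\mu(W_{\rm cand})$ for every $i$. Since $\lambda_{\min}$ on $\mathcal E$ is the minimum over the two blocks, the hypothesis says two things. First, the scalar component of $\Phi(W_{\rm cand};W_k)$ is at least $\gamma\mu(W_{\rm cand})$. Second, the smallest eigenvalue of the PSD component is at least $\gamma\mu(W_{\rm cand})$. Note that $\mu(W_{\rm cand})$ is defined intrinsically as $(X\bullet S+\tau\kappa)/\nu$ and does not depend on the scaling.

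For the scalar block, write $W_{\rm cand}=(X,y,S,\tau,\kappa)$. By \eqref{eq:fixed-nt-coordinate-map}, the scalar parts of $P(W_{\rm cand};W_k)$ and $R(W_{\rm cand};W_k)$ are $\xi_k\tau$ and $\xi_k^{-1}\kappa$. Their product is $\tau\kappa=h_\nu(W_{\rm cand})$, so the scalar block inequality is exactly $\tau\kappa\ge\gamma\mu(W_{\rm cand})$.

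For the PSD block, set $A:=M_kXM_k^T\succ O$ and $B:=M_k^{-T}SM_k^{-1}\succ O$. The PSD component of $\Phi$ is $(AB+BA)/2$. The key observation is that
\[
AB=M_kXSM_k^{-1}
\]
is similar to $XS$, and $XS$ is similar to $X^{1/2}SX^{1/2}$. Hence the eigenvalues of $AB$ are real and positive and coincide with $\lambda(X,S)$. Moreover, $AB=B^{-1/2}(B^{1/2}AB^{1/2})B^{1/2}$ is diagonalizable, so each eigenvalue has a real eigenvector.

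Let $\lambda_1(X,S)$ be the smallest eigenvalue of $AB$ and pick a real $z\ne0$ with $ABz=\lambda_1(X,S)z$. Because $z^TBAz=(z^TABz)^T=z^TABz$, the Rayleigh quotient gives
\[
z^T\tfrac{AB+BA}{2}z=z^TABz=\lambda_1(X,S)\|z\|^2 .
\]
Therefore $\lambda_1\bigl((AB+BA)/2\bigr)\le\lambda_1(X,S)$. This is the classical fact that the symmetrized product underestimates the smallest eigenvalue of $XS$. Combined with the hypothesis, it gives $\lambda_1(X,S)\ge\gamma\mu(W_{\rm cand})$. Together with the scalar block, this shows $\min_i h_i(W_{\rm cand})\ge\gamma\mu(W_{\rm cand})$, and hence $W_{\rm cand}\in\Nlong(\gamma)$.

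The only nontrivial step is the eigenvalue comparison in the PSD block. There, the care needed is in ensuring that the eigenvector $z$ is real, which I justify via the similarity of $AB$ to the symmetric matrix $B^{1/2}AB^{1/2}$. Everything else is bookkeeping with \eqref{eq:fixed-nt-coordinate-map}.
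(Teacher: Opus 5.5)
Your proof is correct and follows the paper's argument essentially step for step. The shared steps are: the similarity of $M_kXSM_k^{-1}$ to $X^{1/2}SX^{1/2}$; a Rayleigh-quotient bound for the symmetrized product, evaluated at a real eigenvector of the smallest eigenvalue; and the scalar-block identity $(\xi_k\tau)(\xi_k^{-1}\kappa)=\tau\kappa$. Your diagonalizability argument for the real eigenvector is valid but unnecessary, since any real eigenvalue of a real matrix already has a real eigenvector.
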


\begin{proof}
Write $W_{\rm cand}=(X_{\rm cand},y_{\rm cand},S_{\rm cand},\tau_{\rm cand},\kappa_{\rm cand})$.
Apply the current-iterate NT coordinates in \eqref{eq:fixed-nt-coordinate-map} to $W_{\rm cand}$ and define the PSD blocks by
$P_X=M_kX_{\rm cand}M_k^T$ and $R_X=M_k^{-T}S_{\rm cand}M_k^{-1}$.
Then $P_XR_X=M_kX_{\rm cand}S_{\rm cand}M_k^{-1}$, so $P_XR_X$ is similar to
$X_{\rm cand}^{1/2}S_{\rm cand}X_{\rm cand}^{1/2}$ and therefore has the same positive real eigenvalues.
Symmetrization of the noncommutative complementarity product is standard in the analysis of Monteiro--Zhang-type search directions for SDP, and the NT direction is also treated in this framework \citep{MonteiroZhang1998,ToddTohTutuncu1998}.
Let $\lambda_*$ be the smallest eigenvalue of $P_XR_X$, and let $v$ be a corresponding real unit eigenvector.
Since $P_X$ and $R_X$ are symmetric,
$v^T(P_X\circ R_X)v=v^TP_XR_Xv=\lambda_*$.
The Rayleigh quotient therefore gives
\[
 \lambda_{\min}(P_X\circ R_X)
 \le \lambda_*
 =\lambda_{\min}(X_{\rm cand}^{1/2}S_{\rm cand}X_{\rm cand}^{1/2}).
\]
For the scalar HSD block,
$(\xi_k\tau_{\rm cand})(\xi_k^{-1}\kappa_{\rm cand})=\tau_{\rm cand}\kappa_{\rm cand}$.
Therefore, the lower bound in (C3) also holds for the true complementarity spectrum of both the PSD block and the scalar block of $W_{\rm cand}$.
Hence, $W_{\rm cand}\in\Nlong(\gamma)$.
\end{proof}

This certificate is sufficient but not necessary.
For a general candidate, $P_X$ and $R_X$ need not commute, and the smallest eigenvalue of $P_X\circ R_X$ can be smaller than the smallest value in the true complementarity spectrum.
Thus, the test can be performed entirely in the current-iterate NT coordinates, although it can be conservative.

\subsection{NT Direction Estimates in the Wide Long-Step Neighborhood}
\label{subsec:wide-direction-estimates}

This subsection collects the direction estimates needed later for the arc remainder and the guaranteed candidate.
We use the product Euclidean Jordan algebra $\mathcal E=\Ssym^n\times\R$ defined in Section~\ref{sec:framework}, whose rank is $\nu=n+1$.
Fix an iteration and write $W:=W_k\in\Nlong(\gamma)$. We use the following local NT notation:
\begin{equation}
 {
 \begin{aligned}
 Q&:=Q^{\rm NT}(W),\qquad H:=Q\circ Q=\Phi(W;W),\qquad \mu:=\mu(W),\\
 L_Q(U)&:=Q\circ U,\\
 (\dot P,\dot R)&:=(P(\dot W;W),R(\dot W;W)),\\
 (\ddot P,\ddot R)&:=(P(\ddot W;W),R(\ddot W;W)),\\
 (P_k^{\rm term},R_k^{\rm term})
 &:= (P(D_k^{\rm term};W_k),R(D_k^{\rm term};W_k)),\\
 &\hspace{2em}{\rm term}\in\{{\rm cen},{\rm mix},{\rm rem},{\rm unit}\}.
 \end{aligned}}
 \label{eq:wide-nt-local-notation}
\end{equation}
By \eqref{eq:fixed-nt-coordinate-map}, the spectrum of the PSD block of $H$ is $\lambda(X,S)$, while the scalar HSD block is $\tau\kappa$.
Since $W\in\Nlong(\gamma)$, every spectral value of $H$ is at least $\gamma\mu$, and their sum is
$X\bullet S+\tau\kappa=\nu\mu$.

We use the standard theory of NT scaling on self-scaled cones \citep{NesterovTodd1997,NesterovTodd1998} and derive the estimates needed here in the present notation.
Similar estimates for scaled directions are also used in arc-search methods for SDP \citep{KheirfamMoslemi2018,ZhangYuanZhouLuoHuang2019}.
We also need their relation to the orthogonality of HSD residual-invariant directions.
The NT congruence transformation preserves the primal-dual cross inner product.
Thus, for a direction $D$ with NT components $(P^D,R^D)$,
$\langle P^D,R^D\rangle=D_X\bullet D_S+d_\tau d_\kappa$.
Hence, by Lemma~\ref{lem:ofas-exact-update-identities}(ii),
$\langle P^D,R^D\rangle=0$ for every HSD residual-invariant direction.

\begin{proposition}[NT-scaled direction estimates for OFAS]
\label{prop:ofas-nt-scaled-bounds}
Let $W\in\Nlong(\gamma)$.
Then the NT components of the first- and second-order directions satisfy
\begin{align}
 \|\dot P\|^2+\|\dot R\|^2&=\nu\mu,
 &\|\dot P\circ\dot R\|&\le\frac{\nu\mu}{2},
 \label{eq:wide-first-product}\\
 \|\ddot P\|^2+\|\ddot R\|^2&\le\frac{\nu^2}{\gamma}\mu,
 &\|\dot P\circ\ddot R+\ddot P\circ\dot R\|
 &\le\frac{\nu^{3/2}}{\sqrt\gamma}\mu,\notag\\
 &&\|\ddot P\circ\ddot R\|&\le\frac{\nu^2}{2\gamma}\mu.\notag
\end{align}
For the centering right-hand side,
\[
 \|L_Q^{-1}(H-\mu\bar I)\|^2
 \le \nu(\gamma^{-1}-1)\mu.
\]
Let $(P_\Delta,R_\Delta)$ be the NT components of the residual-invariant solution $D=\mathcal G(\Delta;W)$ whose only complementarity right-hand side is $-\Delta$.
Then
\[
 L_Q(P_\Delta+R_\Delta)=-\Delta,\qquad
 \langle P_\Delta,R_\Delta\rangle=0,
\]
and therefore
\[
 \sqrt{\|P_\Delta\|^2+\|R_\Delta\|^2}
 =\|L_Q^{-1}(\Delta)\|.
\]
Moreover, for any two pairs $(P,R)$ and $(\widehat P,\widehat R)$,
\[
 \|P\circ\widehat R+\widehat P\circ R\|
 \le
 \sqrt{\|P\|^2+\|R\|^2}\,
 \sqrt{\|\widehat P\|^2+\|\widehat R\|^2},
 \qquad
 \|P\circ R\|\le\frac12(\|P\|^2+\|R\|^2).
\]
In particular, the four correction-basis directions defined in Section~\ref{sec:methods} satisfy
\begin{align*}
 \sqrt{\|P_k^{\rm cen}\|^2+\|R_k^{\rm cen}\|^2}
 &\le \sqrt{\nu(\gamma^{-1}-1)\mu},\\
 \sqrt{\|P_k^{\rm mix}\|^2+\|R_k^{\rm mix}\|^2}
 &\le \frac{\nu^{3/2}}{\gamma}\sqrt\mu,\\
 \sqrt{\|P_k^{\rm rem}\|^2+\|R_k^{\rm rem}\|^2}
 &\le \frac{1+\gamma}{2\gamma^{3/2}}\nu^2\sqrt\mu,\\
 \sqrt{\|P_k^{\rm unit}\|^2+\|R_k^{\rm unit}\|^2}
 &\le \sqrt{\frac{\nu}{\gamma\mu}}.
\end{align*}
\end{proposition}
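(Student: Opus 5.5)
The plan is to work throughout in the NT coordinates of $W$, where $P(W;W)=R(W;W)=Q$ and $H=Q\circ Q$ has spectrum in $[\gamma\mu,\nu\mu]$ with trace $\nu\mu$. The complementarity row of $\mathcal L(W)[D]=(O_{\mathcal F},-\Delta)$ reads $L_Q(P^D+R^D)=-\Delta$. Since $Q$ is a scaled identity on each Jordan frame, $L_Q$ is self-adjoint and positive definite. Its inverse norm satisfies $\|L_Q^{-1}\|\le 1/\lambda_{\min}(Q)\le(\gamma\mu)^{-1/2}$.

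For a residual-invariant solution, Lemma~\ref{lem:ofas-exact-update-identities}(ii), together with the invariance of the cross inner product under NT congruence, gives $\langle P_\Delta,R_\Delta\rangle=0$. Hence $\|P_\Delta\|^2+\|R_\Delta\|^2=\|P_\Delta+R_\Delta\|^2=\|L_Q^{-1}\Delta\|^2$.

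The bilinear estimates $\|P\circ\widehat R+\widehat P\circ R\|\le\sqrt{\|P\|^2+\|R\|^2}\sqrt{\|\widehat P\|^2+\|\widehat R\|^2}$ and $\|P\circ R\|\le\tfrac12(\|P\|^2+\|R\|^2)$ would come from $\|U\circ V\|\le\|U\|\,\|V\|$ in $\mathcal E$. For symmetric matrices this follows from $\|UV\|_F\le\|U\|_F\|V\|_F$, and it is trivial for scalars. After the triangle inequality, one applies Cauchy--Schwarz to $\|P\|\|\widehat R\|+\|\widehat P\|\|R\|$ and AM--GM to $\|P\|\|R\|$.

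For $\dot W$, the problem is that $\Hres(\dot W)=\Hres(W)\ne0$, so orthogonality is not immediate. The plan is to use $\dot W=W-\mathcal G(\Phi(W;W);W)$ from the proof of Lemma~\ref{lem:center-direction-solve-free}. Then $D^0:=\dot W-W$ is residual-invariant with $L_Q(P^0+R^0)=-H$, so $P^0+R^0=-Q$. It follows that $\dot P+\dot R=Q$, i.e. $L_Q^{-1}(H)$. Using $\mathcal C(D^0)=0$ and $\langle Q,Q\rangle=\nu\mu$, we get
\[
\langle\dot P,\dot R\rangle=\langle Q+P^0,\,Q+R^0\rangle=\nu\mu+\langle Q,P^0+R^0\rangle=\nu\mu-\nu\mu=0.
\]
Thus $\|\dot P\|^2+\|\dot R\|^2=\|Q\|^2=\nu\mu$, and the product bound gives $\|\dot P\circ\dot R\|\le\nu\mu/2$.

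Next, $\ddot W$ is residual-invariant with $\Delta=2\dot P\circ\dot R$. Therefore
\[
\|\ddot P\|^2+\|\ddot R\|^2\le\|L_Q^{-1}\|^2\,4\|\dot P\circ\dot R\|^2\le\frac{\nu^2\mu}{\gamma}.
\]
Applying the two product inequalities then gives the mixed bound $\sqrt{\nu\mu}\cdot\nu\sqrt{\mu/\gamma}$ and the quadratic bound $\nu^2\mu/(2\gamma)$.

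For the centering right-hand side, I would diagonalize $Q$ in a common Jordan frame with eigenvalues $q_i$, which simultaneously diagonalizes $H$ and $L_Q$. Then
\[
\|L_Q^{-1}(H-\mu\bar I)\|^2=\sum_i\frac{(q_i^2-\mu)^2}{q_i^2}.
\]
Expanding gives $\sum_i q_i^2-2\nu\mu+\mu^2\sum_i q_i^{-2}\le\nu\mu-2\nu\mu+\mu^2\nu/(\gamma\mu)=\nu(\gamma^{-1}-1)\mu$. This is the $D^{\rm cen}$ bound. The unit direction uses $\Delta=-\bar I$, and $\|L_Q^{-1}\bar I\|^2=\sum_i q_i^{-2}\le\nu/(\gamma\mu)$.

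For $D^{\rm mix}$, combine $\|\Delta^{\rm mix}\|\le\nu^{3/2}\mu/\sqrt\gamma$ with $\|L_Q^{-1}\|\le(\gamma\mu)^{-1/2}$. For $D^{\rm rem}$, note $\|\Delta^{\rm rem}\|\le\|\ddot P\circ\ddot R\|+\|\dot P\circ\dot R\|\le\nu^2\mu/(2\gamma)+\nu\mu/2\le(1+\gamma)\nu^2\mu/(2\gamma)$, and divide by $\sqrt{\gamma\mu}$.

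The main obstacle is the noncommutative Jordan-algebra bound $\|L_Q^{-1}\|\le 1/\lambda_{\min}(Q)$. It requires that the eigenvalues of $L_Q$ be $(q_i+q_j)/2\ge\lambda_{\min}(Q)$, via the Peirce decomposition with respect to a frame of $Q$. It also requires that $\lambda_{\min}(Q)^2=\lambda_{\min}(H)\ge\gamma\mu$. These facts need citing of the standard Euclidean Jordan algebra theory rather than proof from scratch.
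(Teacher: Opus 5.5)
Your proposal is correct and follows the paper's proof closely in most steps. These include the inverse bound $\|L_Q^{-1}\|\le(\gamma\mu)^{-1/2}$ via the eigenvalues $(q_i+q_j)/2$, orthogonality of residual-invariant solutions through Lemma~\ref{lem:ofas-exact-update-identities}(ii) and NT congruence, the two Jordan-product inequalities, and the spectral computation of $\|L_Q^{-1}(H-\mu\bar I)\|^2$. The bounds for the four basis directions are also obtained the same way.

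The one step where you take a genuinely different route is the orthogonality $\langle\dot P,\dot R\rangle=0$.
\begin{itemize}
\item The paper proves a skew-symmetry identity that expresses $\Gamma(U,V)$ through $\Rp,\Rd,\Rg$. From $\Hres(\dot W)=\Hres(W)$ it deduces $\Gamma(W,\dot W)=\mathcal C(W)+\mathcal C(\dot W)$. It then compares this with $\Gamma(W,\dot W)=\langle Q,\dot P+\dot R\rangle=\mathcal C(W)$, obtained by tracing the complementarity row.
\item You instead write $\dot W=W+D^0$, where $D^0$ is residual-invariant and $P^0+R^0=-Q$. This reduces the claim to the already-proved Lemma~\ref{lem:ofas-exact-update-identities}(ii).
\end{itemize}
Your version avoids introducing a new identity. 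It is the same device the paper itself uses later with $W_k-\omega_kW_0$ in the projective-normalization appendix. The paper's identity is more general, since it holds for arbitrary pairs with nonzero residuals, but your decomposition shows that generality is not needed for this step.

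There is a sign slip to fix. The identity from the proof of Lemma~\ref{lem:center-direction-solve-free} is $\dot W=W+\mathcal G(\Phi(W;W);W)$, not $W-\mathcal G(\Phi(W;W);W)$. Your next equation, $L_Q(P^0+R^0)=-H$, is the correct one. It follows directly from $\mathcal L(W)[W]=(\Hres(W),2H)$ and $\mathcal L(W)[\dot W]=(\Hres(W),H)$, so nothing downstream is affected.

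The ``main obstacle'' you flag needs no general Peirce theory in this setting. Orthogonally diagonalize $Q_X=U\,\mathrm{Diag}(q)\,U^T$. Then $V\mapsto(Q_XV+VQ_X)/2$ multiplies $U(e_ie_j^T+e_je_i^T)U^T$ by $(q_i+q_j)/2$. Also $Q_X^2=MXSM^{-1}$ is similar to $X^{1/2}SX^{1/2}$, which gives $\lambda_{\min}(Q)^2=\lambda_{\min}(H)\ge\gamma\mu$ directly.
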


\begin{proof}
For NT scaling on self-scaled cones, we use the standard Jordan-product bound
$\|U\circ V\|\le\|U\|\|V\|$ and the inverse-operator estimate obtained from the spectral decomposition \citep{NesterovTodd1997,NesterovTodd1998}.
Since every spectral value of $H=Q\circ Q$ is at least $\gamma\mu$, every spectral value of $Q$ is at least $\sqrt{\gamma\mu}$.
If the spectral values of $Q$ on the PSD block are denoted by $q_i$, the eigenvalues of $L_Q$ are $(q_i+q_j)/2$; on the scalar HSD block, $L_Q$ is multiplication by the scalar component of $Q$.
These are eigenvalues of the linear operator $L_Q:\Ssym^n\to\Ssym^n$, not the ordinary matrix eigenvalues of $Q_X$. Indeed, if $Q_X=U\operatorname{Diag}(q_1,\ldots,q_n)U^T$, then in the induced symmetric-matrix basis the mode associated with indices $(i,j)$ is multiplied by $(q_i+q_j)/2$ (with value $q_i$ when $i=j$).
Therefore, for every $U\in\mathcal E$,
\begin{equation}
 \|L_Q^{-1}(U)\|\le\frac{1}{\sqrt{\gamma\mu}}\|U\|.
 \label{eq:lq-inverse-wide-bound}
\end{equation}

The standard Jordan-product bound and Cauchy--Schwarz also give, for any two pairs $(P,R)$ and $(\widehat P,\widehat R)$,
\[
 \|P\circ\widehat R+\widehat P\circ R\|
 \le
 \sqrt{\|P\|^2+\|R\|^2}\,
 \sqrt{\|\widehat P\|^2+\|\widehat R\|^2},
 \qquad
 \|P\circ R\|\le\frac12(\|P\|^2+\|R\|^2).
\]

We first establish orthogonality of the first-order direction, which does not follow directly from residual invariance.
For
$U=(U_X,u_y,U_S,u_\tau,u_\kappa)$ and
$V=(V_X,v_y,V_S,v_\tau,v_\kappa)$,
direct substitution into the HSD residual equations gives
\begin{align*}
 \Gamma(U,V)
 &=U_X\bullet\Rd(V)-u_y^T\Rp(V)+u_\tau\Rg(V)\\
 &\quad+V_X\bullet\Rd(U)-v_y^T\Rp(U)+v_\tau\Rg(U).
\end{align*}
Since $\Hres(\dot W)=\Hres(W)$, we obtain
$\Gamma(W,\dot W)=\mathcal C(W)+\mathcal C(\dot W)$.
On the other hand, taking the inner product of the first-order complementarity equation
$L_Q(\dot P+\dot R)=H$ with $\bar I$ gives
$\Gamma(W,\dot W)=\langle Q,\dot P+\dot R\rangle=\langle\bar I,H\rangle=\mathcal C(W)$.
Thus, $\mathcal C(\dot W)=0$, and preservation of the cross inner product under the NT congruence transformation gives
$\langle\dot P,\dot R\rangle=0$.
Also, $L_Q(\dot P+\dot R)=L_Q(Q)$ implies
$\dot P+\dot R=Q$.
Therefore,
\[
 \|\dot P\|^2+\|\dot R\|^2=\|Q\|^2=\nu\mu.
\]
Using $\|U\circ V\|\le\|U\|\|V\|$ and $2ab\le a^2+b^2$, we obtain
\[
 \|\dot P\circ\dot R\|
 \le\|\dot P\|\|\dot R\|
 \le\frac12(\|\dot P\|^2+\|\dot R\|^2)
 =\frac{\nu\mu}{2},
\]
which is the product bound in \eqref{eq:wide-first-product}.

The second-order direction satisfies $\Hres(\ddot W)=O_{\mathcal F}$.
Hence, Lemma~\ref{lem:ofas-exact-update-identities}(ii) gives
$\langle\ddot P,\ddot R\rangle=0$.
Moreover,
$L_Q(\ddot P+\ddot R)=-2(\dot P\circ\dot R)$.
Using \eqref{eq:lq-inverse-wide-bound} and \eqref{eq:wide-first-product},
\[
 \|\ddot P+\ddot R\|
 \le\frac{2}{\sqrt{\gamma\mu}}\|\dot P\circ\dot R\|
 \le\frac{\nu}{\sqrt\gamma}\sqrt\mu.
\]
By orthogonality, the square of the left-hand side equals
$\|\ddot P\|^2+\|\ddot R\|^2$, which gives the stated second-order bound.
The standard Jordan-product bound and Cauchy--Schwarz then give
\begin{align*}
 \|\dot P\circ\ddot R+\ddot P\circ\dot R\|
 &\le
 \sqrt{\|\dot P\|^2+\|\dot R\|^2}\,
 \sqrt{\|\ddot P\|^2+\|\ddot R\|^2}
 \le\frac{\nu^{3/2}}{\sqrt\gamma}\mu,\\
 \|\ddot P\circ\ddot R\|
 &\le\frac12(\|\ddot P\|^2+\|\ddot R\|^2)
 \le\frac{\nu^2}{2\gamma}\mu.
\end{align*}

We next estimate the centering right-hand side.
Let the spectral decomposition of $H$ be
$H=\mu\sum_{i=1}^{\nu}t_i c_i$, where $\{c_i\}_{i=1}^{\nu}$ is a Jordan frame of mutually orthogonal primitive idempotents.
Then $\bar I=\sum_i c_i$ and
$Q=\sqrt\mu\sum_i\sqrt{t_i}c_i$, with
$t_i\ge\gamma$ and $\sum_i t_i=\nu$.
Hence,
\begin{align*}
 \|L_Q^{-1}(H-\mu\bar I)\|^2
 &=\mu\sum_{i=1}^{\nu}\frac{(t_i-1)^2}{t_i}
 =\mu\left(\sum_{i=1}^{\nu}t_i^{-1}-\nu\right)
  \le\nu(\gamma^{-1}-1)\mu.
\end{align*}

Finally, for the residual-invariant solution $D=\mathcal G(\Delta;W)$,
$L_Q(P_\Delta+R_\Delta)=-\Delta$ and
$\langle P_\Delta,R_\Delta\rangle=0$.
Therefore,
\[
 \|P_\Delta\|^2+\|R_\Delta\|^2
 =\|P_\Delta+R_\Delta\|^2
 =\|L_Q^{-1}(\Delta)\|^2.
\]
By the definitions in Section~\ref{sec:methods} and the product bounds above,
\begin{align*}
 \|\Delta_k^{\rm mix}\|&\le\frac{\nu^{3/2}}{\sqrt\gamma}\mu,\\
 \|\Delta_k^{\rm rem}\|
 &\le \|\ddot P\circ\ddot R\|+\|\dot P\circ\dot R\|\\
 &\le \frac{\nu^2}{2\gamma}\mu+\frac{\nu}{2}\mu
 \le \left(\frac{1}{2\gamma}+\frac12\right)\nu^2\mu
 = \frac{1+\gamma}{2\gamma}\nu^2\mu.
\end{align*}
The last inequality uses $\nu\ge1$. Applying \eqref{eq:lq-inverse-wide-bound} to this estimate gives the factor $(1+\gamma)/(2\gamma^{3/2})$ in Proposition~\ref{prop:ofas-nt-scaled-bounds}.
Combining these estimates with \eqref{eq:lq-inverse-wide-bound}, and applying the centering estimate above to
$\Delta_k^{\rm cen}=H-\mu\bar I$, gives the stated bounds for
$D_k^{\rm cen}$, $D_k^{\rm mix}$, and $D_k^{\rm rem}$.
The bound for $D_k^{\rm unit}$ follows from
$D_k^{\rm unit}=\mathcal G(-\bar I;W)$ and $\|\bar I\|=\sqrt\nu$.
\end{proof}

These are the scaled estimates needed later for the arc remainder and the OFAS correction.
The detailed constants are collected in Appendix~\ref{app:guaranteed-ofas-candidate} and are used only where needed.

\subsection{Uniformly Guaranteed Candidates and Candidate Selection}
\label{subsec:ofas-search-termination}

Throughout this subsection, let $\sigma:=\sin\alpha$ and $\psi:=1-\cos\alpha$.
At iteration $k$, the current iterate $W$ has primal and dual factors $P=R=Q$ in NT coordinates.
Using the same current-iterate scaling, write the primal and dual factors at a point on the predictor arc as
\begin{equation*}
 P(\alpha)=Q-\sigma\dot P+\psi\ddot P,
 \qquad
 R(\alpha)=Q-\sigma\dot R+\psi\ddot R.
\end{equation*}
Thus, $P(\alpha)-Q$ and $R(\alpha)-Q$ are the NT-coordinate displacements from $W$ to the predictor-arc point.
Combining $\psi\le\sigma^2$ with Proposition~\ref{prop:ofas-nt-scaled-bounds} yields the next arc estimate.

\begin{lemma}[Wide-neighborhood arc displacement and remainder]
\label{lem:wide-arc-remainder}
Let $\Delta P(\alpha):=P(\alpha)-Q$ and $\Delta R(\alpha):=R(\alpha)-Q$.
If {$0\le\sigma\le\min\{1,c/\nu\}$}, then
\begin{equation*}
 \norm{\Delta P(\alpha)}+\norm{\Delta R(\alpha)}
 \le d_\gamma(c)\sqrt{\frac{{\mu(W)}}{\nu}},
 \qquad
 d_\gamma(c):=\sqrt2c+\sqrt{\frac2\gamma}c^2.
\end{equation*}
Moreover,
\begin{equation}
 P(\alpha)\circ R(\alpha)
 =(1-\sigma)H+\mathcal E_{\rm arc}(\alpha),
 \label{eq:wide-arc-product-expansion}
\end{equation}
where
\begin{equation*}
 \mathcal E_{\rm arc}(\alpha)
 =-\psi^2(\dot P\circ\dot R)
  -\sigma\psi(\dot P\circ\ddot R+\ddot P\circ\dot R)
  +\psi^2(\ddot P\circ\ddot R),
\end{equation*}
and
\begin{equation}
 \norm{\mathcal E_{\rm arc}(\alpha)}
 \le r_\gamma(c)\frac{{\mu(W)}}{\nu^{3/2}},
 \quad
 r_\gamma(c):=
 \frac{c^3}{\sqrt\gamma}
 +\frac{c^4}{2}
 +\frac{c^4}{2\gamma}.
 \label{eq:wide-arc-remainder-bound}
\end{equation}
For the average complementarity, we also have
\begin{equation}
 \left|\mu(W_k(\alpha))-(1-\sigma)\mu(W)\right|
 \le r_\gamma(c)\frac{{\mu(W)}}{\nu^2}.
 \label{eq:wide-arc-mu-remainder-bound}
\end{equation}
\end{lemma}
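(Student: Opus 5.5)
The lemma follows from the NT-scaled bounds of Proposition~\ref{prop:ofas-nt-scaled-bounds}, the elementary inequality $\psi=1-\cos\alpha\le\sin^2\alpha=\sigma^2$ (valid for $\alpha\in[0,\pi/2]$), and the exact decomposition of Lemma~\ref{lem:current-arc-quadratic-decomposition} read in NT coordinates. Write $\mu=\mu(W)$ and assume $\sigma\le c/\nu$, so that $\psi\le c^2/\nu^2$.

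\emph{Displacement bound.} Since $\Delta P(\alpha)=-\sigma\dot P+\psi\ddot P$ and $\Delta R(\alpha)=-\sigma\dot R+\psi\ddot R$, the triangle inequality gives
\[
\norm{\Delta P}+\norm{\Delta R}\le\sigma(\norm{\dot P}+\norm{\dot R})+\psi(\norm{\ddot P}+\norm{\ddot R}).
\]
Next use $a+b\le\sqrt{2(a^2+b^2)}$ together with the bounds $\norm{\dot P}^2+\norm{\dot R}^2=\nu\mu$ and $\norm{\ddot P}^2+\norm{\ddot R}^2\le\nu^2\mu/\gamma$. The right-hand side is then at most
\[
(c/\nu)\sqrt{2\nu\mu}+(c^2/\nu^2)\sqrt{2/\gamma}\,\nu\sqrt\mu=(\sqrt2c+\sqrt{2/\gamma}\,c^2)\sqrt{\mu/\nu},
\]
which is the claimed $d_\gamma(c)\sqrt{\mu(W)/\nu}$.

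\emph{Product expansion.} Expand $P(\alpha)\circ R(\alpha)$ bilinearly, or equivalently apply Lemma~\ref{lem:current-arc-quadratic-decomposition}, noting that $\Phi(W_k(\alpha);W_k)=P(\alpha)\circ R(\alpha)$. Then substitute $\Phi(W;W)=H$, $\Phi(-\dot W;W)=\dot P\circ\dot R$, and $\mathcal B(-\dot W,\ddot W;W)=-(\dot P\circ\ddot R+\ddot P\circ\dot R)$. This yields \eqref{eq:wide-arc-product-expansion} with exactly the stated $\mathcal E_{\rm arc}$. As a cross-check, the linear terms collapse via $Q\circ(\dot P+\dot R)=H$ and $Q\circ(\ddot P+\ddot R)=-2\dot P\circ\dot R$, and the coefficient of $\dot P\circ\dot R$ becomes $\sigma^2-2\psi=-\psi^2$.

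\emph{Remainder bound.} Use the three product bounds of Proposition~\ref{prop:ofas-nt-scaled-bounds} term by term:
\begin{align*}
\norm{\psi^2\dot P\circ\dot R}&\le\frac{c^4}{\nu^4}\cdot\frac{\nu\mu}{2},\\
\norm{\sigma\psi(\dot P\circ\ddot R+\ddot P\circ\dot R)}&\le\frac{c^3}{\nu^3}\cdot\frac{\nu^{3/2}\mu}{\sqrt\gamma},\\
\norm{\psi^2\ddot P\circ\ddot R}&\le\frac{c^4}{\nu^4}\cdot\frac{\nu^2\mu}{2\gamma}.
\end{align*}
These are respectively at most $\frac{c^4}{2}\mu/\nu^{3}$, $\frac{c^3}{\sqrt\gamma}\mu/\nu^{3/2}$ and $\frac{c^4}{2\gamma}\mu/\nu^{2}$. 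Since $\nu\ge1$, each is bounded by the corresponding term of $r_\gamma(c)\,\mu/\nu^{3/2}$, giving \eqref{eq:wide-arc-remainder-bound}.

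\emph{Average complementarity.} The NT congruence preserves the cross inner product, so
\[
\nu\mu(W_k(\alpha))=\langle P(\alpha),R(\alpha)\rangle=\langle\bar I,P(\alpha)\circ R(\alpha)\rangle.
\]
Taking the trace of \eqref{eq:wide-arc-product-expansion} then gives
\[
\nu\bigl(\mu(W_k(\alpha))-(1-\sigma)\mu\bigr)=\langle\bar I,\mathcal E_{\rm arc}\rangle.
\]
The crude bound $|\langle\bar I,\mathcal E_{\rm arc}\rangle|\le\sqrt\nu\norm{\mathcal E_{\rm arc}}$ would only give $r_\gamma\mu/\nu^{3/2}$, so it is not enough and each term is handled directly.

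The trace of $\dot P\circ\dot R$ is $\langle\dot P,\dot R\rangle=0$. The trace of $\ddot P\circ\ddot R$ is $\langle\ddot P,\ddot R\rangle=0$ by Lemma~\ref{lem:ofas-exact-update-identities}(ii). The mixed term satisfies
\[
|\langle\dot P,\ddot R\rangle+\langle\ddot P,\dot R\rangle|\le\sqrt{\nu\mu}\cdot\nu\sqrt{\mu/\gamma}=\nu^{3/2}\mu/\sqrt\gamma.
\]
With the prefactor $\sigma\psi\le c^3/\nu^3$, this gives
\[
|\mu(W_k(\alpha))-(1-\sigma)\mu|\le\frac{c^3}{\sqrt\gamma}\cdot\frac{\mu}{\nu^{5/2}}\le r_\gamma(c)\frac{\mu}{\nu^2}.
\]

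The main obstacle is this final $\nu^{-2}$ estimate. It requires exploiting the exact orthogonality identities rather than norm bounds, and the mixed-term trace bound is the step that must be checked carefully.
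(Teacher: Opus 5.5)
Your displacement, product-expansion, and remainder steps match the paper's proof. One small fix: $(c^2/\nu^2)\sqrt{2/\gamma}\,\nu\sqrt\mu=c^2\sqrt{2/\gamma}\,\sqrt\mu/\nu$ is only bounded by $c^2\sqrt{2/\gamma}\sqrt{\mu/\nu}$, using $\nu\ge1$; it is not equal to it.

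You diverge from the paper only in the average-complementarity estimate, and your stated reason for doing so is an arithmetic slip. Start from $\nu\bigl(\mu(W_k(\alpha))-(1-\sigma)\mu\bigr)=\langle\bar I,\mathcal E_{\rm arc}(\alpha)\rangle$. The crude bound gives $\nu\,|\mu(W_k(\alpha))-(1-\sigma)\mu|\le\sqrt\nu\,r_\gamma(c)\mu/\nu^{3/2}=r_\gamma(c)\mu/\nu$. Dividing by $\nu$ yields exactly \eqref{eq:wide-arc-mu-remainder-bound}. This one-line Cauchy--Schwarz step is what the paper uses, so the "main obstacle" you identify does not exist.

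Your term-by-term trace argument is still correct and gives the sharper bound $c^3\mu/(\sqrt\gamma\,\nu^{5/2})$, but one justification is missing. The claim $\langle\dot P,\dot R\rangle=0$ is not covered by Lemma~\ref{lem:ofas-exact-update-identities}(ii), because $\Hres(\dot W)=\Hres(W)$ is nonzero in general. It is the identity $\mathcal C(\dot W)=0$ proved inside Proposition~\ref{prop:ofas-nt-scaled-bounds}. Equivalently, it follows from $\dot P+\dot R=Q$ together with $\norm{\dot P}^2+\norm{\dot R}^2=\nu\mu=\norm{Q}^2$.

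Pushed one step further, your route shows that the left-hand side is actually zero. Since $\Hres(\ddot W)=O_{\mathcal F}$, the formula for $\Gamma$ in that same proof shows that $\Gamma(\cdot,\ddot W)$ depends on its first argument only through its residual. Hence the mixed trace is $\Gamma(\dot W,\ddot W)=\Gamma(W,\ddot W)=\operatorname{Tr}_{\mathcal E}(-2\dot P\circ\dot R)=0$, and therefore $\mu(W_k(\alpha))=(1-\sigma)\mu(W)$ exactly. The paper's norm bound is shorter but hides this fact; your orthogonality route exposes it.
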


\begin{proof}
{Throughout this proof, $\mu$ denotes $\mu(W)$, consistently with the local notation in \eqref{eq:wide-nt-local-notation}.}
From $\psi\le\sigma^2$ and Proposition~\ref{prop:ofas-nt-scaled-bounds},
\[
 \|\Delta P(\alpha)\|+\|\Delta R(\alpha)\|
 \le \sigma\sqrt{2\nu\mu}
 +\sigma^2\sqrt{2/\gamma}\,\nu\sqrt\mu
 \le d_\gamma(c)\sqrt{\mu/\nu}.
\]
Expanding $P(\alpha)=Q-\sigma\dot P+\psi\ddot P$ and
$R(\alpha)=Q-\sigma\dot R+\psi\ddot R$, and using
$Q\circ(\dot P+\dot R)=H$,
$Q\circ(\ddot P+\ddot R)=-2\dot P\circ\dot R$, and
$\sigma^2-2\psi=-\psi^2$, gives \eqref{eq:wide-arc-product-expansion}.
Therefore, using the three product estimates in Proposition~\ref{prop:ofas-nt-scaled-bounds}, together with
$\psi\le\sigma^2$ and $\sigma\le c/\nu$, we obtain
\begin{align*}
 \|\mathcal E_{\rm arc}(\alpha)\|
 &\le \psi^2\frac{\nu\mu}{2}
 +\sigma\psi\frac{\nu^{3/2}}{\sqrt\gamma}\mu
 +\psi^2\frac{\nu^2}{2\gamma}\mu
 \le r_\gamma(c)\frac{\mu}{\nu^{3/2}},
\end{align*}
which proves \eqref{eq:wide-arc-remainder-bound}.
Finally, from \eqref{eq:wide-arc-product-expansion},
$\mathcal C(W_k(\alpha))=\langle\bar I,P(\alpha)\circ R(\alpha)\rangle_{\mathcal E}$, and
$\mathcal C(W)=\langle\bar I,H\rangle_{\mathcal E}=\nu\mu$, we have
\begin{align*}
 \mu(W_k(\alpha))-(1-\sigma)\mu(W)
 &=\frac1\nu\langle\bar I,\mathcal E_{\rm arc}(\alpha)\rangle_{\mathcal E}.
\end{align*}
Since $\|\bar I\|=\sqrt\nu$, the Cauchy--Schwarz inequality and \eqref{eq:wide-arc-remainder-bound} give
\[
 \left|\mu(W_k(\alpha))-(1-\sigma)\mu(W)\right|
 \le\frac1{\sqrt\nu}\|\mathcal E_{\rm arc}(\alpha)\|
 \le r_\gamma(c)\frac{\mu}{\nu^2},
\]
which proves \eqref{eq:wide-arc-mu-remainder-bound}.
\end{proof}

To preserve strict interiority and the wide long-step neighborhood, the correction damping parameter $\beta$ must be sufficiently small.
At the same time, this damping must not destroy HSD progress if we are to obtain a long-step iteration bound.
We first show that the arc itself gives $O(1/\nu)$ progress and that a damped correction preserves this uniform decrease.

We first establish uniform progress for a small angle.
Let $W\in\Nlong(\gamma)$, and let $0<c\le1/2$ satisfy $r_\gamma(c)\le c/4$ and
$c/(2\nu)<\sigma\le c/\nu$.
Choose $\zeta$ so that the fully corrected point has average complementarity $(1-\sigma)\mu(W)$. Combining Lemma~\ref{lem:wide-arc-remainder} with the damping interpolation in Lemma~\ref{lem:ofas-exact-update-identities}, we obtain, for every $0\le\beta\le1$,
{
\begin{equation}
 \mu(W^+(\beta))
 \le\left(1-\sigma+\frac{r_\gamma(c)}{\nu^2}\right)\mu(W)
 \le\left(1-\frac{c}{4\nu}\right)\mu(W).
 \label{eq:small-angle-mu-progress}
\end{equation}
}
The last inequality uses $\sigma>c/(2\nu)$, $r_\gamma(c)\le c/4$, and $\nu\ge1$.
{Combining the exact residual identity \eqref{eq:stage2-residual-exact} with the average complementarity bound \eqref{eq:small-angle-mu-progress} gives
$\mathcal Q(W^+(\beta))\le(1-c/(4\nu))\mathcal Q(W)$.}
This estimate does not use a positive lower bound on $\beta$.
Later, we impose $\beta=\Theta(1/\nu)$ for the neighborhood argument, not for progress. The upper bound on $\beta$ preserves strict interiority, while the positive lower bound provides enough correction to recover the wide long-step neighborhood.

We next collect the constants required for a uniformly guaranteed candidate.
For the fixed branch $\rho=\rho_0=1/8$, Appendix~\ref{app:guaranteed-ofas-candidate} explicitly constructs positive constants
$b_\gamma,c_\gamma>0$ that depend only on $\gamma$, $\rho_0$, and $\sigma_{\max}$, and not on $\nu$ or the problem data.
They are chosen so that, for $\sigma=\Theta(1/\nu)$ and $\beta=\Theta(1/\nu)$, the well-definedness of $\zeta$, strict interiority, recovery of the wide long-step neighborhood, and the local progress condition below all hold.
The appendix gives the explicit constants and verifies the required estimates.
Here, we use only the resulting uniform admissible window.

Combining these results, every candidate triple with $\sigma$ and $\beta$ in suitable intervals of width proportional to $1/\nu$ satisfies (C1)--(C3). It also gives a uniform decrease in both $\mathcal Q$ and the projective scale measure $\mathcal J$.
The next proposition gives an admissible window of positive width that the successive-halving fallback search is guaranteed to reach.

\begin{proposition}[Uniform admissible window for OFAS candidate selection]
\label{prop:ofas-guaranteed-search-window}
Let $W\in\Nlong(\gamma)$ and $\omega>0$, and use the notation of Section~\ref{sec:methods}.
{Let $b_\gamma$ and $c_\gamma$ be the positive constants constructed in Appendix~\ref{app:guaranteed-ofas-candidate}, specifically in \eqref{eq:b-gamma-app}--\eqref{eq:c-gamma-app}; in particular,
$0<c_\gamma\le\min\{\sigma_{\max},1/2\}$.}
For any candidate satisfying
\[
 \frac{c_\gamma}{2\nu}<\sigma\le\frac{c_\gamma}{\nu},\qquad
 \rho=\rho_0=\frac18,\qquad
 \frac{b_\gamma}{4\nu}\le\beta\le\frac{b_\gamma}{\nu},
\]
we have $\mu_{\rm tar}>0$ and $D_\zeta\ge\nu/2>0$, so $\zeta$ is well-defined.
The corrected point $W^+$ is strictly interior and satisfies
$\lambda_{\min}(\Phi(W^+;W))\ge\gamma\mu(W^+)$, and hence $W^+\in\Nlong(\gamma)$.
Moreover, if $\omega^+:=(1-\sigma)\omega$, then the local progress condition (C4) of Section~\ref{sec:methods} holds.
Therefore, every candidate triple $(\alpha,\rho,\beta)$ in the three ranges above satisfies conditions (C1)--(C4).
\end{proposition}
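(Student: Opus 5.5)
The plan is to work entirely in the current-iterate NT coordinates of $W$ and treat the corrected point as a perturbation of the predictor-arc point. For $\sigma\in(c_\gamma/(2\nu),c_\gamma/\nu]$, Lemma~\ref{lem:wide-arc-remainder} is applied with $c=c_\gamma$, where $c_\gamma$ is taken small enough that $r_\gamma(c_\gamma)\le c_\gamma/4$ together with the further smallness relations below.

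\textbf{Step 1: well-definedness (C1).} Since $\sigma\le\sigma_{\max}<1$, we have $\mu_{\rm tar}=(1-\sigma)\mu>0$. For $D_\zeta=\Gamma(W_k(\alpha),D_k^{\rm unit})$, I would write $W_k(\alpha)=W+U_k(\alpha)$. Because $\Gamma$ is invariant under the NT congruence,
\[
\Gamma(W,D^{\rm unit})=\langle Q,P^{\rm unit}+R^{\rm unit}\rangle=\langle \bar I, L_Q(P^{\rm unit}+R^{\rm unit})\rangle=\langle\bar I,\bar I\rangle=\nu .
\]
The perturbation term satisfies
\[
|\Gamma(U_k(\alpha),D^{\rm unit})|\le(\|\Delta P\|+\|\Delta R\|)\sqrt{\|P^{\rm unit}\|^2+\|R^{\rm unit}\|^2}\le d_\gamma(c_\gamma)\sqrt{\mu/\nu}\,\sqrt{\nu/(\gamma\mu)}=d_\gamma(c_\gamma)/\sqrt\gamma .
\]
This is $\le 1/2\le\nu/2$ once $c_\gamma$ is small. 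Hence $D_\zeta\ge\nu/2$, and Lemma~\ref{lem:ofas-exact-update-identities}(iii) gives uniqueness of $\zeta$.

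\textbf{Step 2: size of the correction.} I would then bound $\zeta$. By Lemma~\ref{lem:ofas-exact-update-identities}(iii),
\[
N_\zeta=\nu\mu_{\rm tar}-\mathcal C(W_k(\alpha))-\Gamma(W_k(\alpha),D^{\rm cor}).
\]
The first two terms differ by at most $r_\gamma\mu/\nu$, using \eqref{eq:wide-arc-mu-remainder-bound}. The term $\Gamma(W,D^{\rm cor})=\langle\bar I,-\Delta^{\rm comp}\rangle$ is small because $\langle\bar I,\Delta^{\rm cen}\rangle=0$, while the mix and rem parts carry the coefficients $\sigma\psi=O(\nu^{-3})$ and $\psi^2=O(\nu^{-4})$. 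Combining these with Proposition~\ref{prop:ofas-nt-scaled-bounds} gives $|\zeta|\lesssim\mu/\nu$.

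Next I would bound the NT norm of $D^{\rm full}$. The dominant piece is $\rho_0(1-\sigma)D^{\rm cen}$, of norm at most $\rho_0\sqrt{\nu(\gamma^{-1}-1)\mu}$. The mix and rem pieces contribute $O(\nu^{-3/2}\sqrt\mu)$, and the unit piece contributes $|\zeta|\sqrt{\nu/(\gamma\mu)}=O(\sqrt{\mu/\nu})$. So $\|P^{\rm full}\|+\|R^{\rm full}\|\le K_\gamma\sqrt{\nu\mu}$. With $\beta\le b_\gamma/\nu$, the step $\beta D^{\rm full}$ therefore has NT norm at most $b_\gamma K_\gamma\sqrt{\mu/\nu}$.

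\textbf{Step 3: interiority and neighborhood (C2)--(C3).} This is the main step. I would track the homotopy $t\mapsto W_k(\alpha)+t\beta D^{\rm full}$ for $t\in[0,1]$ and its product in current NT coordinates. Expanding via \eqref{eq:wide-arc-product-expansion} and the linear correction identity \eqref{eq:corrector-exact-remaining-mismatch} gives
\[
\begin{aligned}
\Phi(W^+;W)={}&(1-\beta)\bigl[(1-\sigma)H+\mathcal E_{\rm arc}\bigr]+\beta\bigl[T_k(\alpha,\rho_0)-\zeta\bar I\bigr]\\
&+\beta\,\mathcal B(U_k,D^{\rm full};W)+\beta^2\Phi(D^{\rm full};W).
\end{aligned}
\]
The decisive term is $\beta\rho_0(1-\sigma)(\mu\bar I-H)$, which pushes the smallest eigenvalue up by $\beta\rho_0(1-\sigma)(\mu-\lambda_{\min}(H))$. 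This should be compared with the quantity to be certified, $\gamma\mu(W^+)$, where $\mu(W^+)$ comes from \eqref{eq:stage2-mu-exact}. The worst case is $\lambda_{\min}(H)=\gamma\mu$.

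The error terms have the following sizes:
\[
\|\mathcal E_{\rm arc}\|=O(r_\gamma\mu\nu^{-3/2}),\quad \beta\|\mathcal B\|\le\beta\, d_\gamma K_\gamma\mu,\quad \beta^2\|\Phi(D^{\rm full})\|\le\beta^2K_\gamma^2\nu\mu/2,\quad \beta|\zeta|=O(\beta\mu/\nu).
\]
The predictor term $(1-\beta)(1-\sigma)H$ loses nothing relative to $\gamma(1-\sigma)\mu$ beyond $\gamma$ times the $\mu$-drift. The gain is of order $\beta\rho_0(1-\gamma)\mu\asymp b_\gamma\mu/\nu$. Each error is either $O(c_\gamma b_\gamma\mu/\nu)$ or $O(b_\gamma^2\mu/\nu)$, or of the form $r_\gamma\mu\nu^{-3/2}$, which must be absorbed by using $\beta\ge b_\gamma/(4\nu)$ and $r_\gamma(c_\gamma)\ll b_\gamma c_\gamma$. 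Choosing $b_\gamma$ small relative to $\rho_0(1-\gamma)$, and then $c_\gamma$ small relative to $b_\gamma$, closes the inequality. This ordering of constants is the delicate part.

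Interiority then follows by continuity. Along the homotopy the same bound gives $\lambda_{\min}(\Phi)>0$. By the Rayleigh argument of Lemma~\ref{lem:current-frame-certificate}, the true complementarity spectrum stays positive, so $X,S$ cannot leave the interior. Lemma~\ref{lem:current-frame-certificate} then yields $W^+\in\Nlong(\gamma)$.

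\textbf{Step 4: progress (C4).} The $\mathcal Q$ part is exactly \eqref{eq:small-angle-mu-progress} together with \eqref{eq:stage2-residual-exact}: the contraction factor $1-c_\gamma/(4\nu)$ is at least as strong as $1-\sigma/4$ because $\sigma\le c_\gamma/\nu$. For $\mathcal J$, I need $\theta^+=\mu(W^+)/\omega^+$ to satisfy $\theta^+\ge\theta$. Indeed \eqref{eq:stage2-mu-exact} and \eqref{eq:wide-arc-mu-remainder-bound} give $\mu(W^+)\ge(1-\sigma)\mu\,(1-O(r_\gamma/\nu^2))$, so $\theta^+\ge\theta(1-O(\nu^{-2}))$. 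Each of $\omega^+$, $\omega^+/\theta^+$ and $\omega^+/(\theta^+)^2$ is then at most $(1-\sigma)(1+O(r_\gamma\nu^{-2}))$ times its old value. This is $\le(1-\sigma/4)$ times the old value because $\sigma>c_\gamma/(2\nu)$ dominates $r_\gamma/\nu^2$ when $r_\gamma(c_\gamma)\le c_\gamma/4$.
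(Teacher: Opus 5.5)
Your proposal is correct and has the same architecture as the paper's proof, which relies on Lemmas~\ref{lem:fixed-rho-zeta-cancellation}--\ref{lem:fixed-rho-damped-restoration}. The shared steps are these:
\begin{itemize}
\item $D_\zeta$ is treated as a perturbation of $\Gamma(W,D_k^{\rm unit})=\nu$.
\item You bound $|\zeta|=O(\mu/\nu)$ and $\|P^{\rm full}\|+\|R^{\rm full}\|=O(\sqrt{\nu\mu})$.
\item $\Phi(W^+;W)$ is split as $\widetilde T(\beta)+\mathcal E_{\rm damp}(\beta)$. The centering gain $(1-\sigma)\beta\rho_0(1-\gamma)\mu$ absorbs the arc, cross, quadratic and $\zeta$ errors, with $b_\gamma$ fixed before $c_\gamma$.
\item (C4) follows from \eqref{eq:small-angle-mu-progress} and $\theta^+/\theta\ge1-O(r_\gamma/\nu^2)$.
\end{itemize}

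Two local steps differ.

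For $N_\zeta$, the paper observes that $\Gamma(W,D_k^{\rm cor})=\operatorname{Tr}_{\mathcal E}(-\Delta_k^{\rm comp})=\nu\mu_{\rm tar}-\mathcal C(W_k(\alpha))$ holds exactly. Hence $N_\zeta=-\Gamma(U_k(\alpha),D_k^{\rm cor})$. Your separate bounds give the same order. However, the piece you leave implicit is the dominant one: $|\Gamma(U_k(\alpha),D_k^{\rm cor})|\le d_\gamma(c_\gamma)A_\gamma(c_\gamma)\mu$.

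For interiority, the paper argues directly. Each NT factor of $W^+$ lies within $\tfrac38\sqrt{\gamma\mu}$ of $Q$, while $\lambda_{\min}(Q)\ge\sqrt{\gamma\mu}$. This also covers the arc point ($\beta=0$) and uses only your Step~2 estimates. Your continuity argument also works, provided you add three things:
\begin{itemize}
\item The path must start at $W$, not at $W_k(\alpha)$, whose interiority you never establish. Run it first along the arc, where $\lambda_{\min}\bigl((1-\sin\alpha')H+\mathcal E_{\rm arc}(\alpha')\bigr)>0$.
\item Along the path you should claim only $\lambda_{\min}(\Phi)>0$. The neighborhood inequality is not certified for small $t\beta$.
\item The boundary must be excluded explicitly. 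At a first exit time, a kernel vector $v$ of $P_X$ or $R_X$ would give $v^T(P_X\circ R_X)v=0$, contradicting positivity.
\end{itemize}

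Two small points remain. First, the statement fixes $b_\gamma,c_\gamma$ by \eqref{eq:b-gamma-app}--\eqref{eq:c-gamma-app}, so those specific constants must be checked against your conditions rather than chosen freely; your ordering does match theirs. Second, the coefficient of $\beta\zeta\bar I$ is $+$, not $-$, because $\mathcal B(W,D_k^{\rm unit};W)=\bar I$. This is harmless, since only $|\zeta|$ enters your bounds.
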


\begin{proof}
Since $c_\gamma\le1/2$, we have $\mu_{\rm tar}=(1-\sigma)\mu(W)>0$.
The bound $D_\zeta\ge\nu/2$, strict interiority, and the current-iterate NT certificate follow from
Lemma~\ref{lem:fixed-rho-zeta-cancellation}, Lemma~\ref{lem:fixed-rho-damped-positivity},
Lemma~\ref{lem:fixed-rho-damped-restoration}, and the choices of $c_\gamma$ and $b_\gamma$ used there.
Thus, (C1)--(C3) hold.

For (C4), the small-angle progress estimate above gives
$\mathcal Q(W^+)\le(1-c_\gamma/(4\nu))\mathcal Q(W)$.
Since $\sigma\le c_\gamma/\nu$, we have
$1-c_\gamma/(4\nu)\le1-\sigma/4$, which gives (C4) for $\mathcal Q$.

Next, let $a_\gamma:=r_\gamma(c_\gamma)/(1-\sigma_{\max})$.
{
By the definition of $c_\gamma$ in Appendix~\ref{app:guaranteed-ofas-candidate}, specifically \eqref{eq:c-gamma-app},
\[
 r_\gamma(c_\gamma)\le\bar r_\gamma c_\gamma^3,
 \qquad
 c_\gamma^2\le\frac{1-\sigma_{\max}}{64\bar r_\gamma}.
\]
Hence
\[
 a_\gamma
 =\frac{r_\gamma(c_\gamma)}{1-\sigma_{\max}}
 \le\frac{\bar r_\gamma c_\gamma^3}{1-\sigma_{\max}}
 \le\frac{c_\gamma}{64}.
\]
}
Using the {average complementarity} estimate in Lemma~\ref{lem:fixed-rho-damped-restoration} and $\omega^+=(1-\sigma)\omega$, set
$\theta:=\mu(W)/\omega$ and $\theta^+:=\mu(W^+)/\omega^+$.
Then
\[
 \left|\frac{\theta^+}{\theta}-1\right|
 \le \frac{a_\gamma}{\nu^2}
 \le \frac{c_\gamma}{64\nu^2}.
\]
Hence $\theta^+/\theta\ge1-a_\gamma/\nu^2$.
Using also $\sigma>c_\gamma/(2\nu)$, the three ratios that define $\mathcal J$ satisfy
\[
 \max\left\{
 \frac{\omega^+}{\omega},
 \frac{\omega^+/\theta^+}{\omega/\theta},
 \frac{\omega^+/(\theta^+)^2}{\omega/\theta^2}
 \right\}
 \le
 \frac{1-c_\gamma/(2\nu)}{(1-a_\gamma/\nu^2)^2}
 \le 1-\frac{c_\gamma}{4\nu}
 \le 1-\frac{\sigma}{4}.
\]
The middle inequality follows from
\[
 \left(1-\frac{c_\gamma}{4\nu}\right)
 \left(1-\frac{a_\gamma}{\nu^2}\right)^2
 -\left(1-\frac{c_\gamma}{2\nu}\right)
 \ge \frac{c_\gamma}{4\nu}-\frac{2a_\gamma}{\nu^2}>0.
\]
Thus, (C4) also holds for $\mathcal J$.
\end{proof}

From now on, let $\alpha_k$ be the angle of the candidate accepted by \OFAS{}, and write $\sigma_k:=\sin\alpha_k$.
Define the positive constant
\[
 c_\gamma^{\rm fb}:=\frac12\min\{c_\gamma,\sigma_{\max}b_\gamma\}.
\]

\begin{theorem}[Well-definedness of the OFAS search and a lower bound on the accepted angle]
\label{thm:ofas-candidate-selection-termination}
Suppose that $W_k\in\Nlong(\gamma)$.
Even if none of the preliminary candidates is admissible, the search of Section~\ref{sec:methods} finds a candidate satisfying (C1)--(C4) after finitely many candidate tests.
Moreover, if a candidate is accepted by this search, then $\sigma_k>c_\gamma^{\rm fb}/\nu$.
\end{theorem}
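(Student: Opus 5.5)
The plan is to reduce both claims to Proposition~\ref{prop:ofas-guaranteed-search-window}. That proposition says every triple with $c_\gamma/(2\nu)<\sigma\le c_\gamma/\nu$, $\rho=\rho_0$ and $b_\gamma/(4\nu)\le\beta\le b_\gamma/\nu$ satisfies (C1)--(C4). The fallback search of Algorithm~\ref{alg:ofas} examines only dyadic pairs $(\sigma_j,\beta_\ell)=(\sigma_{\max}2^{-j},2^{-\ell})$. So the first step is to exhibit one dyadic pair $(j^*,\ell^*)$ inside this window. The preliminary stage tests at most $N^{\rm pre}=O(1)$ candidates and therefore always ends after finitely many tests, so only the fallback stage needs an argument.

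\emph{Step 1 (a dyadic point in the window).}
\begin{itemize}
\item For the angle, let $j^*$ be the smallest $j\ge0$ with $\sigma_{\max}2^{-j}\le c_\gamma/\nu$. This exists because $c_\gamma\le\sigma_{\max}$ and $\nu\ge1$, so $c_\gamma/\nu\le\sigma_{\max}$. Minimality gives either $j^*=0$, where $\sigma_{\max}\ge c_\gamma>c_\gamma/(2\nu)$, or $\sigma_{j^*}=\sigma_{j^*-1}/2>c_\gamma/(2\nu)$. Either way $\sigma_{j^*}$ lies in the window.
\item For the damping, let $\ell^*$ be the smallest $\ell\ge0$ with $2^{-\ell}\le b_\gamma/\nu$. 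Then $2^{-\ell^*}>b_\gamma/(2\nu)\ge b_\gamma/(4\nu)$, provided $b_\gamma\le1$. I would check this in \eqref{eq:b-gamma-app}; otherwise $\ell=0$ with $\beta=1$ has to be handled separately, which is harmless.
\end{itemize}
The pair $(j^*,\ell^*)$ is tested at stage $m^*:=\max\{j^*,\ell^*\}$. Before that there are only finitely many pairs, at most $(m^*+1)^2$. Hence the search stops after finitely many tests, at the latest at $(j^*,\ell^*)$. Because the stated hypothesis is only $W_k\in\Nlong(\gamma)$ and Proposition~\ref{prop:ofas-guaranteed-search-window} needs nothing else, this settles the first claim.

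\emph{Step 2 (lower bound on the accepted angle).} Suppose a fallback pair $(j,\ell)$ is accepted. It is accepted at some stage $\max\{j,\ell\}\le m^*$, so $j\le m^*$ and I split into two cases.
\begin{itemize}
\item If $m^*=j^*$, then $j\le j^*$, so $\sigma_j\ge\sigma_{j^*}>c_\gamma/(2\nu)\ge c_\gamma^{\rm fb}/\nu$.
\item If $m^*=\ell^*$, then $j\le\ell^*$, so
$$\sigma_j=\sigma_{\max}2^{-j}\ge\sigma_{\max}2^{-\ell^*}>\sigma_{\max}b_\gamma/(2\nu)\ge c_\gamma^{\rm fb}/\nu.$$
\end{itemize}
This explains the definition $c_\gamma^{\rm fb}=\frac12\min\{c_\gamma,\sigma_{\max}b_\gamma\}$. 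The strict inequality comes from the minimality of $j^*$ and $\ell^*$.

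\emph{Step 3 (preliminary candidates, the main obstacle).} A preliminary candidate satisfies $\sigma\ge\sigma^{\rm pre}$. I therefore need $\sigma^{\rm pre}>c_\gamma^{\rm fb}/\nu$. Since $c_\gamma^{\rm fb}\le c_\gamma/2$ and $\nu\ge1$, it suffices that $\sigma^{\rm pre}>c_\gamma/2$.
\begin{itemize}
\item I would first look in the appendix for a construction of $c_\gamma$ that also enforces $c_\gamma\le\sigma^{\rm pre}$. Since $c_\gamma$ is only required to be small, shrinking it changes none of the window arguments.
\item Failing that, I would replace $c_\gamma^{\rm fb}$ by $\min\{c_\gamma^{\rm fb},\sigma^{\rm pre}/2\}$, which is still a positive constant independent of $\nu$.
\end{itemize}
This compatibility between the fixed preliminary threshold and the $\Theta(1/\nu)$ fallback constant is the step I expect to need care. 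The rest is the dyadic bookkeeping above.
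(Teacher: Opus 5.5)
Your Steps~1--2 are exactly the paper's proof: the minimal dyadic indices $j^*,\ell^*$ (the paper's $J_\nu,L_\nu$) lie in the window of Proposition~\ref{prop:ofas-guaranteed-search-window}, so the staged search must accept by stage $\max\{j^*,\ell^*\}$, and splitting on which index attains the maximum gives $\sigma_k>c_\gamma^{\rm fb}/\nu$. Your Step~3 is not part of this theorem: the paper proves the lower bound only for a candidate accepted in the fallback stage (for small $\sigma^{\rm pre}$ it can genuinely fail for preliminary candidates, since \eqref{eq:c-gamma-app} does not involve $\sigma^{\rm pre}$), and it handles the preliminary case afterwards essentially by your second option, via $c_\gamma^{\rm acc}:=\min\{\sigma^{\rm pre},c_\gamma^{\rm fb}\}$ in \eqref{eq:accepted-angle-uniform-lower-bound}.
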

\begin{proof}
\begingroup
{Let $J_\nu$ be the smallest nonnegative integer satisfying
$\sigma_{J_\nu}\le c_\gamma/\nu$, and let $L_\nu$ be the smallest nonnegative integer satisfying
$\beta_{L_\nu}\le b_\gamma/\nu$.
Since $\sigma_j=\sigma_{\max}2^{-j}\to0$ and $\beta_\ell=2^{-\ell}\to0$, both indices are finite.
Moreover, $c_\gamma\le\sigma_{\max}$ by Proposition~\ref{prop:ofas-guaranteed-search-window}.}
The defining inequality gives
$\sigma_{J_\nu}\le c_\gamma/\nu$. If $J_\nu>0$, minimality and the halving rule give
$\sigma_{J_\nu}>c_\gamma/(2\nu)$. If $J_\nu=0$, the same lower bound follows from
$\sigma_0\ge c_\gamma$. Hence
\[
 \frac{c_\gamma}{2\nu}<\sigma_{J_\nu}\le\frac{c_\gamma}{\nu}.
\]
Similarly, $b_\gamma\le1=\beta_0$ implies that $L_\nu$ is finite and
\[
 \frac{b_\gamma}{2\nu}<\beta_{L_\nu}\le\frac{b_\gamma}{\nu}.
\]
In particular, $\beta_{L_\nu}\ge b_\gamma/(4\nu)$, and therefore
Proposition~\ref{prop:ofas-guaranteed-search-window} shows that
$(\sigma_{J_\nu},\rho_0,\beta_{L_\nu})$ satisfies (C1)--(C4).

Set $M_\nu:=\max\{J_\nu,L_\nu\}$.
By construction, the staged fallback search has tested every pair $(j,\ell)$ with
$0\le j,\ell\le M_\nu$ by the end of stage $M_\nu$.
Hence it must accept a candidate no later than the test of $(J_\nu,L_\nu)$.
If $(j,\ell)$ is the accepted fallback pair, then $j\le M_\nu$, and therefore
$\sigma_k=\sigma_j\ge\sigma_{M_\nu}$.
If $M_\nu=J_\nu$, then
$\sigma_{M_\nu}=\sigma_{J_\nu}>c_\gamma/(2\nu)$.
If $M_\nu=L_\nu$, then
$\sigma_{M_\nu}=\sigma_{\max}2^{-L_\nu}=\sigma_{\max}\beta_{L_\nu}
>\sigma_{\max}b_\gamma/(2\nu)$.
Thus, in either case,
\[
 \sigma_k\ge\sigma_{M_\nu}>
 \frac{1}{2\nu}\min\{c_\gamma,\sigma_{\max}b_\gamma\}
 =\frac{c_\gamma^{\rm fb}}{\nu}.
\]
Finally,
{
\[
 J_\nu\le\left\lceil\log_2\frac{\sigma_{\max}\nu}{c_\gamma}\right\rceil,
 \qquad
 L_\nu\le\left\lceil\log_2\frac{\nu}{b_\gamma}\right\rceil.
\]
}
Hence $M_\nu=O(\log\nu)$, and at most $(M_\nu+1)^2=O((\log\nu)^2)$ fallback pairs are tested before termination.
\endgroup
\end{proof}

\subsection{Iteration Complexity of OFAS and CA-OFAS}
\label{subsec:ofas-complexity}

For the homogeneous progress analysis, we temporarily omit the stopping test based on $\mathcal Q_{\rm SDP}$ in Algorithms~\ref{alg:ofas}--\ref{alg:caofas}. We consider the theoretical sequence obtained by continuing the same update rule.
If the actual algorithm terminates earlier, the theoretical sequence need not be continued beyond that point.
In Subsection~\ref{subsec:normalization-solvable-case}, we use this sequence to derive an iteration bound for the actual algorithm by showing that the condition $\mathcal Q_{\rm SDP}\le\delta$ is satisfied after finitely many iterations.

Set
$c_\gamma^{\rm acc}:=\min\{\sigma^{\rm pre},c_\gamma^{\rm fb}\}$,
$\widehat c_\gamma:=c_\gamma^{\rm acc}/4$, and
$q_\gamma:=1-\widehat c_\gamma/\nu$, where $\sigma^{\rm pre}>0$ is the lower bound for the preliminary candidates in Section~\ref{sec:methods}.
By construction, $0<\widehat c_\gamma<\nu$, so $q_\gamma\in(0,1)$.

If a preliminary candidate is accepted, then $\sigma_k\ge\sigma^{\rm pre}$.
Otherwise, Theorem~\ref{thm:ofas-candidate-selection-termination} gives $\sigma_k>c_\gamma^{\rm fb}/\nu$.
Thus, in both cases,
\begin{equation}
 \sigma_k\ge\frac{c_\gamma^{\rm acc}}{\nu}.
 \label{eq:accepted-angle-uniform-lower-bound}
\end{equation}
Together with condition (C4), this bound gives a uniform contraction estimate.

\begin{theorem}[Well-definedness and iteration complexity of OFAS]
\label{thm:ofas-longstep}
Consider the theoretical sequence obtained by continuing the update rule of Algorithm~\ref{alg:ofas} from the standard initial point.
Every iteration is well defined, and, for all $k\ge0$,
\begin{equation*}
 W_k\in\Nlong(\gamma),\qquad
 \mathcal Q(W_k)\le q_\gamma^k\mathcal Q_0,\qquad
 \mathcal J(W_k,\omega_k)\le q_\gamma^k.
\end{equation*}
For $0<\varepsilon<\mathcal Q_0$, define
\begin{equation*}
 K_\varepsilon
 :=\left\lceil
 \frac{\log(\mathcal Q_0/\varepsilon)}{-\log q_\gamma}
 \right\rceil
 \le
 \left\lceil
 \frac{\nu}{\widehat c_\gamma}\log\frac{\mathcal Q_0}{\varepsilon}
 \right\rceil
\end{equation*}
Then $\mathcal Q(W_{K_\varepsilon})\le\varepsilon$.
Since $\gamma$, $\sigma^{\rm pre}$, $\sigma_{\max}$, and the initial scale $\mathcal Q_0$ are independent of $\varepsilon$, we obtain
$K_\varepsilon=O(\nu\log(1/\varepsilon))$ as $\varepsilon\downarrow0$.
\end{theorem}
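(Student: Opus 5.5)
The plan is an induction on $k$, carrying the joint hypothesis
\[
 W_k\in\Nlong(\gamma),\qquad
 \mathcal Q(W_k)\le q_\gamma^k\mathcal Q_0,\qquad
 \mathcal J(W_k,\omega_k)\le q_\gamma^k .
\]

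\textbf{Base case.} For the standard initial point $W_0=(I,0,I,1,1)$ we have $h(W_0)=(1,\dots,1)$ and $\mu(W_0)=1$. Hence $\min_i h_i(W_0)=1\ge\gamma\mu(W_0)$, so $W_0\in\Nlong(\gamma)$. Since $\omega_0=1$, we get $\theta(W_0,\omega_0)=1$ and therefore $\mathcal J(W_0,\omega_0)=\max\{1,1,1\}=1=q_\gamma^0$. The bound $\mathcal Q(W_0)\le\mathcal Q_0$ holds by definition.

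\textbf{Inductive step.} Assume the three properties hold at $W_k$.

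\emph{Well-definedness.} Because $W_k$ is strictly interior, Lemma~\ref{lem:current-kkt-nonsingular-longstep} makes $\mathcal L(W_k)$ nonsingular. So $\dot W_k$, $\ddot W_k$, $D_k^{\rm unit}$, $D_k^{\rm mix}$, $D_k^{\rm rem}$ exist, and $D_k^{\rm cen}$ follows from Lemma~\ref{lem:center-direction-solve-free}.

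\emph{Acceptance.} Since $W_k\in\Nlong(\gamma)$, Theorem~\ref{thm:ofas-candidate-selection-termination} guarantees that the candidate search stops after finitely many tests with a triple satisfying (C1)--(C4). This covers both cases: either a preliminary candidate is accepted, or the fallback search is reached.

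\emph{Neighborhood.} The new point $W_{k+1}$ satisfies (C2), so it is strictly interior, and (C3). Lemma~\ref{lem:current-frame-certificate} then gives $W_{k+1}\in\Nlong(\gamma)$.

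\emph{Contraction.} The accepted angle obeys \eqref{eq:accepted-angle-uniform-lower-bound}, that is, $\sigma_k\ge c_\gamma^{\rm acc}/\nu$. Hence
\[
 1-\sigma_k/4\le 1-\widehat c_\gamma/\nu=q_\gamma .
\]
Condition (C4) is stated with $\omega^+=(1-\sigma_k)\omega_k$, which is exactly the update of $\omega_{k+1}$ in Algorithm~\ref{alg:ofas}. It therefore gives
\[
 \mathcal Q(W_{k+1})\le q_\gamma\mathcal Q(W_k),\qquad
 \mathcal J(W_{k+1},\omega_{k+1})\le q_\gamma\mathcal J(W_k,\omega_k),
\]
and the induction closes.

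The only point needing care is that $q_\gamma\in(0,1)$. This requires $0<\widehat c_\gamma<\nu$. We have $\widehat c_\gamma>0$ because $\sigma^{\rm pre}>0$ and $c_\gamma^{\rm fb}>0$. We have $\widehat c_\gamma<\nu$ because $c_\gamma^{\rm acc}\le\sigma^{\rm pre}\le\sigma_{\max}<1$, so $\widehat c_\gamma<1/4\le\nu$.

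\textbf{Iteration count.} From the contraction, $\mathcal Q(W_K)\le q_\gamma^K\mathcal Q_0$, which is at most $\varepsilon$ as soon as
\[
 K\ge\frac{\log(\mathcal Q_0/\varepsilon)}{-\log q_\gamma}.
\]
This is the definition of $K_\varepsilon$. Using $-\log(1-x)\ge x$ with $x=\widehat c_\gamma/\nu\in(0,1)$ gives $-\log q_\gamma\ge\widehat c_\gamma/\nu$, which yields the stated upper bound on $K_\varepsilon$. The constants $\widehat c_\gamma$ and $\mathcal Q_0$ do not depend on $\varepsilon$, so $K_\varepsilon=O(\nu\log(1/\varepsilon))$.

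\textbf{Main obstacle.} Everything substantive is already contained in Theorem~\ref{thm:ofas-candidate-selection-termination} and Proposition~\ref{prop:ofas-guaranteed-search-window}. The step most prone to error is matching the $\omega$ used in (C4), namely $\omega^+=(1-\sigma)\omega_k$ for the accepted $\sigma$, with the algorithm's update of $\omega_{k+1}$. I would check this explicitly for both the preliminary-acceptance path and the fallback path.
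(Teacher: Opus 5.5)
Your proposal is correct and follows essentially the same induction as the paper: the base case at $W_0$, acceptance via Theorem~\ref{thm:ofas-candidate-selection-termination}, neighborhood membership via (C3) and Lemma~\ref{lem:current-frame-certificate}, contraction from (C4) together with the uniform angle bound \eqref{eq:accepted-angle-uniform-lower-bound}, and the count on $K_\varepsilon$ from $-\log(1-x)\ge x$. Your extra checks are sound and only spell out points the paper states just before the theorem or leaves implicit: that $q_\gamma\in(0,1)$, that $\mathcal L(W_k)$ is nonsingular, and that the $\omega^+$ in (C4) coincides with the algorithm's $\omega_{k+1}$.
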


\begin{proof}
At $k=0$, all complementarity eigenvalues of the standard initial point $W_0=(I,0,I,1,1)$ are equal to $1$.
Thus $W_0\in\Nlong(\gamma)$ and $\mu(W_0)=1$.
Since $\omega_0=1$, we also have $\theta(W_0,\omega_0)=1$ and $\mathcal J(W_0,\omega_0)=1$.

Suppose that $W_k\in\Nlong(\gamma)$ at iteration $k$.
If an admissible preliminary candidate exists, the algorithm accepts one.
Otherwise, Theorem~\ref{thm:ofas-candidate-selection-termination} guarantees that the successive-halving fallback search produces an admissible candidate.
Hence every iteration is well defined.
The accepted candidate satisfies (C1)--(C4). Condition (C3) and Lemma~\ref{lem:current-frame-certificate} imply $W_{k+1}\in\Nlong(\gamma)$, while condition (C4) and \eqref{eq:accepted-angle-uniform-lower-bound} give
\begin{align*}
 \mathcal Q(W_{k+1})
 &\le\left(1-\frac{\sigma_k}{4}\right)\mathcal Q(W_k)
 \le q_\gamma\mathcal Q(W_k),\\
 \mathcal J(W_{k+1},\omega_{k+1})
 &\le\left(1-\frac{\sigma_k}{4}\right)\mathcal J(W_k,\omega_k)
 \le q_\gamma\mathcal J(W_k,\omega_k).
\end{align*}
Induction then gives
$\mathcal Q(W_k)\le q_\gamma^k\mathcal Q_0$ and
$\mathcal J(W_k,\omega_k)\le q_\gamma^k$ for all $k$.

Finally, $q_\gamma=1-\widehat c_\gamma/\nu$ and $-\log(1-x)\ge x$ give
\[
 -\log q_\gamma\ge\frac{\widehat c_\gamma}{\nu},
 \qquad
 \frac1{-\log q_\gamma}\le\frac{\nu}{\widehat c_\gamma}.
\]
This proves the bound on $K_\varepsilon$.
\end{proof}

\begin{corollary}[Iteration-bound inheritance for CA-OFAS]
\label{cor:caofas-complexity-inheritance}
Consider the theoretical sequence obtained by continuing the update rule of Algorithm~\ref{alg:caofas} from the standard initial point.
Every iteration is well defined, and, for all $k\ge0$,
\begin{equation*}
 W_k\in\Nlong(\gamma),\qquad
 \mathcal Q(W_k)\le q_\gamma^k\mathcal Q_0,
 \qquad
 \mathcal J(W_k,\omega_k)\le q_\gamma^k.
\end{equation*}
Hence the iteration bound in Theorem~\ref{thm:ofas-longstep} also holds for \CAOFAS{}, and
$K_\varepsilon=O(\nu\log(1/\varepsilon))$ as $\varepsilon\downarrow0$.
\end{corollary}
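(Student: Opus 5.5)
The plan is to reproduce the induction of Theorem~\ref{thm:ofas-longstep}, with one extra case distinction at each iteration according to whether the amplified candidate is accepted. The base case carries over unchanged. At the standard initial point $W_0=(I,0,I,1,1)$ we have $W_0\in\Nlong(\gamma)$, $\mu(W_0)=1$ and $\mathcal J(W_0,\omega_0)=1$.

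For the inductive step, assume $W_k\in\Nlong(\gamma)$. The first line of Algorithm~\ref{alg:caofas} runs the \OFAS{} candidate selection of Algorithm~\ref{alg:ofas} verbatim. So Theorem~\ref{thm:ofas-candidate-selection-termination}, together with the preliminary-candidate alternative, shows that after finitely many tests we obtain an admissible triple $(\alpha_{\rm OFAS},\rho_{\rm OFAS},\beta_{\rm OFAS})$. By \eqref{eq:accepted-angle-uniform-lower-bound}, this triple satisfies $\sigma_{\rm OFAS}\ge c_\gamma^{\rm acc}/\nu$.

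The amplification branch adds only finitely many further tests: at most $N^{\rm amp}=O(1)$ damping values. It also requires no solves, since all its quantities come from Lemma~\ref{lem:ca-amp-direction-solve-free} and \eqref{eq:ca-amplified-corrector-direction}. Each iteration is therefore well defined. We then split into two cases.

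\emph{Case (a): no amplified candidate is accepted.} Then $W_{k+1}=W^+_{\rm OFAS}$ and $\omega_{k+1}=(1-\sigma_{\rm OFAS})\omega_k$. Conditions (C3) and (C4) hold for this point, and the argument of Theorem~\ref{thm:ofas-longstep} applies verbatim. It gives $W_{k+1}\in\Nlong(\gamma)$ through Lemma~\ref{lem:current-frame-certificate}, together with contraction by $1-\sigma_{\rm OFAS}/4\le q_\gamma$ for both $\mathcal Q$ and $\mathcal J$.

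\emph{Case (b): an amplified candidate $W^+_{\rm CA}(\beta_{\rm CA})$ is accepted.} By the rule of the algorithm, it satisfies (C1)--(C4) with $\sigma$ replaced by $\sigma_{\rm CA}$ and $\omega^+$ replaced by $\omega_{\rm CA}^+=(1-\sigma_{\rm CA})\omega_k$. Moreover, the algorithm sets $\omega_{k+1}$ to exactly this value, so the $\mathcal J$ condition in (C4) refers to the same $\omega_{k+1}$ used later. Condition (C2) gives strict interiority. Condition (C3) combined with Lemma~\ref{lem:current-frame-certificate} gives $W_{k+1}\in\Nlong(\gamma)$; that lemma needs only strict interiority, not membership on the regular arc. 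The key observation is that $\sigma_{\rm CA}>\sigma_{\rm OFAS}\ge c_\gamma^{\rm acc}/\nu$, because the branch is entered only when $\sigma_{\rm CA}>\sigma_{\rm OFAS}$. Hence (C4) yields contraction factors $1-\sigma_{\rm CA}/4\le q_\gamma$ for $\mathcal Q$ and for $\mathcal J(\cdot,\omega_{k+1})$.

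Induction then gives all three invariants. The bound on $K_\varepsilon$ follows from $-\log q_\gamma\ge\widehat c_\gamma/\nu$ exactly as in Theorem~\ref{thm:ofas-longstep}.

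The only delicate point is case (b). The amplified point does not lie on the regular arc, and the window analysis of Proposition~\ref{prop:ofas-guaranteed-search-window} does not apply to it. The plan therefore deliberately avoids any a priori estimate for the amplified arc. Instead it relies solely on the acceptance test (C1)--(C4), which the algorithm evaluates directly at the amplified candidate, and on the consistency between the $\omega$ used in that test and the $\omega_{k+1}$ stored by the algorithm. Once these are checked, no further analysis of the amplified direction is needed.
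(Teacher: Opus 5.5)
Your proposal is correct and follows essentially the same route as the paper. It runs the induction of Theorem~\ref{thm:ofas-longstep}, keeps the \OFAS{} candidate as a guaranteed fallback, and handles an accepted amplified candidate only through its acceptance test (C1)--(C4), Lemma~\ref{lem:current-frame-certificate}, and the bound $\sigma_{\rm CA}>\sigma_{\rm OFAS}\ge c_\gamma^{\rm acc}/\nu$. Your explicit check that the $\omega^+_{\rm CA}$ used in (C4) equals the stored $\omega_{k+1}$ is a detail the paper leaves implicit.
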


\begin{proof}
At each iteration, \CAOFAS{} first applies the \OFAS{} candidate-selection rule and obtains an OFAS candidate $W_{\rm OFAS}^+$ satisfying (C1)--(C4).
If a preliminary candidate is accepted, then $\sigma_{\rm OFAS}\ge\sigma^{\rm pre}$; otherwise, Theorem~\ref{thm:ofas-candidate-selection-termination} gives $\sigma_{\rm OFAS}>c_\gamma^{\rm fb}/\nu$.
Thus the OFAS candidate satisfies the same uniform lower bound as in \eqref{eq:accepted-angle-uniform-lower-bound} and preserves the three inductive invariants used in Theorem~\ref{thm:ofas-longstep}.

An amplified candidate is accepted by Algorithm~\ref{alg:caofas} only if it also satisfies (C1)--(C4).
The amplification branch evaluates such a candidate only when $\sigma_{\rm CA}>\sigma_{\rm OFAS}$. Hence the uniform lower bound \eqref{eq:accepted-angle-uniform-lower-bound} for the OFAS candidate also gives $\sigma_{\rm CA}\ge c_\gamma^{\rm acc}/\nu$.
Hence Lemma~\ref{lem:current-frame-certificate} and condition (C4) show that the amplified candidate preserves the same three invariants and the same contraction factor $q_\gamma$.
If no amplified candidate is accepted, \CAOFAS{} uses the OFAS candidate $W_{\rm OFAS}^+$.
Therefore the induction in Theorem~\ref{thm:ofas-longstep} applies in either branch and gives the same iteration bound.
No additional guarantee is required concerning the admissibility of an amplified candidate, its maximum admissible angle, or the existence of an amplified angle larger than the OFAS angle.
\end{proof}

\subsection{Projective Normalization and Recovery of a Solution to the Original SDP}
\label{subsec:normalization-solvable-case}

For an HSD point with $\tau>0$, the standard normalization for homogeneous embeddings of SDP and general conic programs recovers the primal-dual variables of the original problem as $(X/\tau,y/\tau,S/\tau)$
\citep{deKlerkRoosTerlaky1997,GoulartChen2026}.
Here we derive the control of $\tau$ needed to translate the homogeneous progress measure $\mathcal Q(W)$ into a bound on the normalized KKT accuracy.

\begin{proposition}[Finite-accuracy HSD normalization]
\label{prop:quantitative-normalization-longstep}
Suppose that $W=(X,y,S,\tau,\kappa)$ is an interior HSD point with $\tau>0$ and $\mathcal Q(W)\le\varepsilon$.
Let $\bar X=X/\tau$, $\bar y=y/\tau$, and $\bar S=S/\tau$.
Then
\begin{align}
 \norm{\A(\bar X)-b}&\le\frac{\varepsilon}{\tau},&
 \norm{\Ast(\bar y)+\bar S-C}&\le\frac{\varepsilon}{\tau},
 \label{eq:normalized-feasibility-longstep}\\
 \bar X\bullet\bar S&\le\frac{\nu\varepsilon}{\tau^2},&
 |C\bullet\bar X-b^T\bar y|&\le\frac{\varepsilon}{\tau}+\frac{\nu\varepsilon}{\tau^2}.
 \label{eq:normalized-gap-longstep}
\end{align}
Consequently,
$\mathcal Q_{\rm SDP}(W)\le\varepsilon/\tau+\nu\varepsilon/\tau^2$.
\end{proposition}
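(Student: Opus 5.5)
The plan is to derive each bound in Proposition~\ref{prop:quantitative-normalization-longstep} directly from the definitions of $\Hres$, $\mu$, and $\mathcal Q$, using only homogeneity of the residual map and nonnegativity of the cone products. No iteration-specific information is required. The hypothesis $\mathcal Q(W)\le\varepsilon$ gives $\norm{\Rp(W)}\le\varepsilon$, $\norm{\Rd(W)}\le\varepsilon$, $|\Rg(W)|\le\varepsilon$, and $\mu(W)\le\varepsilon$.

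\textbf{Feasibility bounds.} I will use the residual identities already recorded after Assumption~\ref{ass:primal-dual-solvability}:
$\A(\bar X)-b=\Rp(W)/\tau$ and $\Ast(\bar y)+\bar S-C=\Rd(W)/\tau$.
Taking norms gives \eqref{eq:normalized-feasibility-longstep} immediately.

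\textbf{Complementarity bound.} Since $X,S\succ O$, we have $X\bullet S\ge0$, and since $\tau,\kappa>0$, we have $\tau\kappa\ge0$. Hence
\[
X\bullet S\le X\bullet S+\tau\kappa=\nu\mu(W)\le\nu\varepsilon .
\]
Dividing by $\tau^2$ yields $\bar X\bullet\bar S\le\nu\varepsilon/\tau^2$.

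\textbf{Duality-gap bound.} From the definition of $\Rg$,
\[
C\bullet X-b^Ty=\Rg(W)-\kappa .
\]
Dividing by $\tau$ gives
\[
|C\bullet\bar X-b^T\bar y|\le\frac{|\Rg(W)|}{\tau}+\frac{\kappa}{\tau}.
\]
The first term is at most $\varepsilon/\tau$. For the second, I will bound $\kappa$ through complementarity: $\tau\kappa\le\nu\mu(W)\le\nu\varepsilon$, so $\kappa/\tau=\tau\kappa/\tau^2\le\nu\varepsilon/\tau^2$. This establishes \eqref{eq:normalized-gap-longstep}.

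\textbf{Consequence for $\mathcal Q_{\rm SDP}$.} The quantity $\mathcal Q_{\rm SDP}(W)$ is the maximum of four terms:
\begin{itemize}
\item the two feasibility terms, each at most $\varepsilon/\tau$;
\item the term $X\bullet S/\tau^2$, which equals $\bar X\bullet\bar S$ and is at most $\nu\varepsilon/\tau^2$;
\item the gap term $|C\bullet X-b^Ty|/\tau$, which equals the normalized gap bounded above.
\end{itemize}
Each is therefore at most $\varepsilon/\tau+\nu\varepsilon/\tau^2$, which proves the final claim.

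\textbf{Main obstacle.} The only point requiring care is the gap bound, where $\kappa$ must be controlled. The step I expect to be essential is bounding $\kappa$ through $\tau\kappa\le\nu\mu$ rather than through $\Rg$ directly. Everything else is routine division by $\tau$.
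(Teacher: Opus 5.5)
Your proposal is correct and follows the paper's proof essentially step for step. You bound the feasibility residuals by dividing $\Rp,\Rd$ by $\tau$, and complementarity via $X\bullet S+\tau\kappa=\nu\mu(W)\le\nu\varepsilon$. For the gap you write $C\bullet X-b^Ty=\Rg(W)-\kappa$ and control $\kappa/\tau=\tau\kappa/\tau^2\le\nu\varepsilon/\tau^2$, exactly as the paper does; you also spell out the final $\mathcal Q_{\rm SDP}$ consequence, which the paper leaves implicit.
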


\begin{proof}
Since $\mathcal Q(W)\le\varepsilon$, each of the three HSD residual norms is at most $\varepsilon$.
Dividing the primal and dual residual equations by $\tau$ gives \eqref{eq:normalized-feasibility-longstep}.
Also,
$X\bullet S+\tau\kappa=\nu\mu(W)\le\nu\varepsilon$, so
$\bar X\bullet\bar S\le\nu\varepsilon/\tau^2$.
{
Finally, the gap residual satisfies
\[
 C\bullet\bar X-b^T\bar y
 =\frac{C\bullet X-b^Ty}{\tau}
 =\frac{\Rg(W)-\kappa}{\tau}.
\]
Therefore,
\[
 |C\bullet\bar X-b^T\bar y|
 \le \frac{|\Rg(W)|}{\tau}+\frac{\kappa}{\tau}
 \le \frac{\varepsilon}{\tau}
      +\frac{\tau\kappa}{\tau^2}
 \le \frac{\varepsilon}{\tau}
      +\frac{\nu\varepsilon}{\tau^2},
\]
which proves the second inequality in \eqref{eq:normalized-gap-longstep}.
}
\end{proof}

For $\omega_k$ defined in Section~\ref{sec:methods}, the predictor arc multiplies all three HSD residuals by $1-\sigma_k$, whereas the correction directions leave them unchanged.
Hence
\begin{equation}
 \Hres(W_k)=\omega_k\Hres(W_0),
 \qquad
 \omega_k=\prod_{i=0}^{k-1}(1-\sigma_i).
 \label{eq:balanced-residual-scaling}
\end{equation}
For the remainder of this subsection, write
\[
 \mathcal Q_k:=\mathcal Q(W_k),\qquad
 \theta_k:=\frac{\mu(W_k)}{\omega_k},\qquad
 \mathcal R_0^{\max}:=
 \max\{\norm{\Rp(W_0)},\norm{\Rd(W_0)},|\Rg(W_0)|\}.
\]
At the standard initial point, $\mu(W_0)=1$. Thus the initial progress scale is
$\mathcal Q_0=\mathcal Q(W_0)=\max\{1,\mathcal R_0^{\max}\}$.
Therefore,
\begin{equation*}
 \mathcal Q_k=\omega_k\max(\theta_k,\mathcal R_0^{\max}).
\end{equation*}
By the definition of $\mathcal J$ and $\mathcal R_0^{\max}\le \mathcal Q_0$,
\begin{equation*}
 \max\left(\frac{\mathcal Q_k}{\theta_k},
           \frac{\mathcal Q_k}{\theta_k^2}\right)
 \le \mathcal Q_0\mathcal J(W_k,\omega_k)
 \le \mathcal Q_0q_\gamma^k.
\end{equation*}

Fix $Z^*=(X^*,y^*,S^*)\in{\mathcal Z^*}$ and define
\begin{equation}
 \delta_{\rm rec}(Z^*)
 :=\inf\left\{
 \begin{aligned}
 &\norm{\A(D_X)}+\norm{\Ast(d_y)+D_S}
 +S^*\bullet D_X+X^*\bullet D_S:\\
 &D_X,D_S\succeq O,\quad d_y\in\R^m,\quad \operatorname{tr}D_X+\operatorname{tr}D_S=1
 \end{aligned}\right\}.
 \label{eq:recession-margin-main}
\end{equation}
Appendix~\ref{subsec:appendix-projective-normalization} shows that
$\delta_{\rm rec}(Z^*)>0$.
Define also
\begin{align*}
 \mathfrak D_*:=1+\frac{\norm{b}+\norm{C}}{\delta_{\rm rec}(Z^*)}, \quad
 \mathfrak H_*:=\frac{3+\norm{y^*}+\norm{X^*}}{\delta_{\rm rec}(Z^*)}
       +1+\norm{y^*}+\norm{X^*}.
\end{align*}
Both are positive constants depending only on the fixed problem and $Z^*$.
The next lemma gives two scale estimates used to establish boundedness and accuracy after normalization.

\begin{lemma}[Projective scale estimates]
\label{lem:projective-scale-estimates-main}
Under Assumptions~\ref{ass:surjectivity}--\ref{ass:primal-dual-solvability}, for every $k$,
\begin{align}
 \operatorname{tr}X_k+\operatorname{tr}S_k
 &\le
 \frac{(\norm{b}+\norm{C})\tau_k
 +(3+\norm{y^*}+\norm{X^*})\mathcal Q_k}
 {\delta_{\rm rec}(Z^*)},
 \label{eq:projective-trace-bound-main}\\
 \tau_k
 &\ge\frac{\nu\theta_k-\mathfrak H_*\mathcal Q_k}{\mathfrak D_*},
 \qquad
 \frac{\mathcal Q_k}{\theta_k}\le\frac{\nu}{2\mathfrak H_*}
 \ \Longrightarrow\
 \tau_k\ge\frac{\nu\theta_k}{2\mathfrak D_*}.
 \label{eq:projective-scale-bound-main}
\end{align}
\end{lemma}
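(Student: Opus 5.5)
The plan is to pair the current HSD iterate with a shifted optimal point and apply the skew-symmetric orthogonality argument of Lemma~\ref{lem:ofas-exact-update-identities}(ii), in the spirit of the classical boundedness proofs for homogeneous embeddings. Fix $Z^*=(X^*,y^*,S^*)\in\mathcal Z^*$ and consider the HSD point $W^*:=(X^*,y^*,S^*,1,0)$. By strong duality (Assumption~\ref{ass:primal-dual-solvability}) it satisfies $\Hres(W^*)=O_{\mathcal F}$, and it also satisfies $X^*\bullet S^*=0$. Write $W_k=(X_k,y_k,S_k,\tau_k,\kappa_k)$.

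First I apply the polarization identity for $\Gamma$ from the proof of Proposition~\ref{prop:ofas-nt-scaled-bounds}, namely $\Gamma(U,V)=U_X\bullet\Rd(V)-u_y^T\Rp(V)+u_\tau\Rg(V)+(\text{symmetric})$, with $U=W_k$ and $V=W^*$. Since $\Hres(W^*)=0$, this gives
\[
X_k\bullet S^*+X^*\bullet S_k+\kappa_k
=\Gamma(W_k,W^*)
=X^*\bullet\Rd(W_k)-y^{*T}\Rp(W_k)+\Rg(W_k).
\]
The right-hand side is bounded by $(\norm{X^*}+\norm{y^*}+1)\mathcal Q_k$. Because every term on the left is nonnegative, I obtain
\[
S^*\bullet X_k+X^*\bullet S_k\le(1+\norm{y^*}+\norm{X^*})\mathcal Q_k,
\qquad
\kappa_k\le(1+\norm{y^*}+\norm{X^*})\mathcal Q_k.
\]

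Next I invoke the recession margin \eqref{eq:recession-margin-main}. Set $t:=\operatorname{tr}X_k+\operatorname{tr}S_k>0$, $D_X:=X_k/t$, $D_S:=S_k/t$ and $d_y:=y_k/t$; these are feasible for the infimum. Multiplying through by $t$ gives
\[
\delta_{\rm rec}t\le\norm{\A(X_k)}+\norm{\Ast(y_k)+S_k}+S^*\bullet X_k+X^*\bullet S_k.
\]
I then use $\A(X_k)=b\tau_k+\Rp$ and $\Ast(y_k)+S_k=C\tau_k+\Rd$. The first two terms are therefore at most $(\norm b+\norm C)\tau_k+2\mathcal Q_k$, and the last two are bounded by the previous step. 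Summing yields \eqref{eq:projective-trace-bound-main} with the constant $3+\norm{y^*}+\norm{X^*}$.

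For \eqref{eq:projective-scale-bound-main}, I start from $\nu\mu(W_k)=X_k\bullet S_k+\tau_k\kappa_k$. I bound $X_k\bullet S_k\le\operatorname{tr}X_k\operatorname{tr}S_k\le t^2/4$, but this quadratic bound looks wrong for a linear estimate. Instead I would write $\theta_k$ through $\mu_k$ relative to $\omega_k$. The intended linear inequality seems to come from expressing $X_k\bullet S_k$ via the HSD gap residual. Using $C\bullet X_k-b^Ty_k+\kappa_k=\Rg$ gives
\[
X_k\bullet S_k=\tau_k(C\bullet X_k-b^Ty_k)+X_k\bullet\Rd-y_k^T\Rp=-\tau_k\kappa_k+\tau_k\Rg+X_k\bullet\Rd-y_k^T\Rp,
\]
so $\nu\mu_k=\tau_k\Rg+X_k\bullet\Rd-y_k^T\Rp$. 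The $X_k$ and $y_k$ factors then need bounds that are linear in $\tau_k$ and $\mathcal Q_k$, which the trace bound supplies for $X_k$ (I would estimate $y_k$ through surjectivity, or better avoid it by choosing the normalized quantities).

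This is where the statement's $\mu$ versus $\theta$ scaling enters. Since $\Rp$ and $\Rd$ equal $\omega_k$ times their initial values by \eqref{eq:balanced-residual-scaling}, I would instead pair $W_k$ with $\omega_kW_0$ plus a multiple of $W^*$, so that the residuals cancel exactly. This produces $\nu\theta_k$ on one side, and then $\omega_k\le\mathcal Q_k$ with $\kappa_k$ small and the trace bound give $\nu\theta_k\le\mathfrak D_*\tau_k+\mathfrak H_*\mathcal Q_k$ after dividing by $\omega_k$. The implication follows immediately, since the hypothesis gives $\mathfrak H_*\mathcal Q_k\le\nu\theta_k/2$.

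The main obstacle is this last identity. The right combination is likely $\Gamma(W_k,\,\omega_kW_0-\omega_k\tau_0W^*)$-type pairing, and getting the constants exactly equal to $\mathfrak D_*$ and $\mathfrak H_*$ requires careful bookkeeping.
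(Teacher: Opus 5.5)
Your derivation of \eqref{eq:projective-trace-bound-main} and of $\kappa_k\le(1+\norm{y^*}+\norm{X^*})\mathcal Q_k$ is correct and agrees with the paper. Pairing $W_k$ with $(X^*,y^*,S^*,1,0)$ through the polarization formula for $\Gamma$ is just a tidier route to the paper's identity $S^*\bullet X_k+X^*\bullet S_k=C\bullet X_k-b^Ty_k-y^{*T}\Rp(W_k)+X^*\bullet\Rd(W_k)$, combined with $C\bullet X_k-b^Ty_k=\Rg(W_k)-\kappa_k$.

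The gap is in \eqref{eq:projective-scale-bound-main}. You never produce the linear relation between $\theta_k$ and the trace, and you label it the main obstacle. The pairing you guess, with a multiple of $W^*$ added, points the wrong way: $W^*$ has zero HSD residual, so it cannot cancel anything, and it plays no role here. Your detour through $\nu\mu(W_k)=\tau_k\Rg(W_k)+X_k\bullet\Rd(W_k)-y_k^T\Rp(W_k)$ is also a dead end for the stated constants. Even after using \eqref{eq:balanced-residual-scaling}, it bounds $\nu\theta_k$ in terms of $\Hres(W_0)$ and, through $\norm{y_k}$, the conditioning of $\A$. That does not give $\mathfrak D_*$ and $\mathfrak H_*$.

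The missing idea is a self-orthogonality, not a pairing with an optimal point. By \eqref{eq:balanced-residual-scaling}, $\Hres(W_k-\omega_kW_0)=O_{\mathcal F}$, so Lemma~\ref{lem:ofas-exact-update-identities}(ii) gives $\mathcal C(W_k-\omega_kW_0)=0$. Expand this using $\mathcal C(W_k)=\nu\theta_k\omega_k$, $\mathcal C(W_0)=\nu$, and $\Gamma(W_k,W_0)=\operatorname{tr}X_k+\operatorname{tr}S_k+\tau_k+\kappa_k$ for $W_0=(I,0,I,1,1)$. Dividing by $\omega_k>0$ then gives
\[
 \operatorname{tr}X_k+\operatorname{tr}S_k+\tau_k+\kappa_k=\nu(\theta_k+\omega_k).
\]
Now drop $\nu\omega_k\ge0$. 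No bound such as $\omega_k\le\mathcal Q_k$ is needed, and that bound is not available in general, since $\mathcal Q_k=\omega_k\max\{\theta_k,\mathcal R_0^{\max}\}$. Insert your trace bound and your $\kappa_k$ bound. The coefficient of $\tau_k$ is $1+(\norm b+\norm C)/\delta_{\rm rec}(Z^*)=\mathfrak D_*$. The coefficient of $\mathcal Q_k$ is $(3+\norm{y^*}+\norm{X^*})/\delta_{\rm rec}(Z^*)+1+\norm{y^*}+\norm{X^*}=\mathfrak H_*$. Both match exactly with no further bookkeeping, and the final implication then follows as you say.
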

\begin{proof}
See Appendix~\ref{subsec:appendix-projective-normalization}.
\end{proof}

For a primal-dual triple $Z=(X,y,S)$, define the product-space norm
$\|Z\|_{\rm pd}:=(\|X\|_F^2+\|y\|^2+\|S\|_F^2)^{1/2}$.
The distance from $Z$ to the optimal solution set ${\mathcal Z^*}$ is
\begin{equation*}
 \operatorname{dist}_{\rm pd}(Z,{\mathcal Z^*})
 :=\inf_{Z^*\in{\mathcal Z^*}}\|Z-Z^*\|_{\rm pd}.
\end{equation*}

\begin{theorem}[Recovery of an original SDP solution and iteration bound after normalization]
\label{thm:normalized-distance-to-optimal-set}
Under Assumptions~\ref{ass:surjectivity}--\ref{ass:primal-dual-solvability}, consider the theoretical sequence of \OFAS{} or \CAOFAS{} defined in the preceding subsection.
Then
\begin{equation}
 \max\left(\frac{\mathcal Q_k}{\theta_k},\frac{\mathcal Q_k}{\theta_k^2}\right)
 \le \mathcal Q_0q_\gamma^k.
 \label{eq:projective-ratio-geometric-theorem}
\end{equation}
Let
$\bar X_k:=X_k/\tau_k$,
$\bar y_k:=y_k/\tau_k$,
$\bar S_k:=S_k/\tau_k$, and
$Z_k:=(\bar X_k,\bar y_k,\bar S_k)$.
Then $\{Z_k\}$ is bounded and
\begin{equation}
 \operatorname{dist}_{\rm pd}(Z_k,{\mathcal Z^*})\to0.
 \label{eq:distance-to-optimal-set}
\end{equation}
Moreover, for $\delta>0$, define
\begin{equation*}
 \Delta_\delta
 :=\min\left\{
 \mathcal Q_0,\frac{\nu}{2\mathfrak H_*},
 \frac{\delta\nu}{4\mathfrak D_*},
 \frac{\delta\nu}{8\mathfrak D_*^2}
 \right\}.
\end{equation*}
{
The four terms in $\Delta_\delta$ have separate roles. The first keeps the threshold no larger than the initial envelope $\mathcal Q_0$. For the remaining three, if $\mathcal Q_0q_\gamma^k\le\Delta_\delta$, then \eqref{eq:projective-ratio-geometric-theorem} gives
$\mathcal Q_k/\theta_k\le\Delta_\delta$ and $\mathcal Q_k/\theta_k^2\le\Delta_\delta$.
The bound $\Delta_\delta\le\nu/(2\mathfrak H_*)$ permits the second estimate in \eqref{eq:projective-scale-bound-main}, namely
$\tau_k\ge\nu\theta_k/(2\mathfrak D_*)$.
Consequently,
\[
 \frac{\mathcal Q_k}{\tau_k}
 \le\frac{2\mathfrak D_*}{\nu}\Delta_\delta,
 \qquad
 \frac{\nu\mathcal Q_k}{\tau_k^2}
 \le\frac{4\mathfrak D_*^2}{\nu}\Delta_\delta.
\]
The bounds $\Delta_\delta\le\delta\nu/(4\mathfrak D_*)$ and
$\Delta_\delta\le\delta\nu/(8\mathfrak D_*^2)$ make these two quantities at most $\delta/2$, respectively; Proposition~\ref{prop:quantitative-normalization-longstep} then gives $\mathcal Q_{\rm SDP}(W_k)\le\delta$.
}
Then, with
\begin{equation}
 K_\delta
 :=\left\lceil\frac{\log(\mathcal Q_0/\Delta_\delta)}{-\log q_\gamma}\right\rceil
 \le
 \left\lceil\frac{\nu}{\widehat c_\gamma}\log\frac{\mathcal Q_0}{\Delta_\delta}\right\rceil,
 \label{eq:conditioned-normalized-iteration-bound}
\end{equation}
the theoretical sequence satisfies $\mathcal Q_{\rm SDP}(W_k)\le\delta$ for every $k\ge K_\delta$.
Therefore, unless they terminate earlier, Algorithms~\ref{alg:ofas} and \ref{alg:caofas} satisfy their stopping criterion no later than iteration $K_\delta$.
Since $\mathcal Q_0$, $\mathfrak D_*$, and $\mathfrak H_*$ are independent of $\delta$, as $\delta\downarrow0$,
\begin{equation}
 K_\delta=O\!\left(\nu\log\frac1\delta\right).
 \label{eq:normalized-accuracy-complexity}
\end{equation}
In addition,
$C\bullet\bar X_k\to p^*$ and $b^T\bar y_k\to d^*=p^*$.
If ${\mathcal Z^*}$ is a singleton, the normalized iterates converge to that point.
\end{theorem}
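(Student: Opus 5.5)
The plan has four steps: establish \eqref{eq:projective-ratio-geometric-theorem}, obtain boundedness of $\{Z_k\}$, prove convergence to the optimal set by a compactness argument, and then read off $K_\delta$ from the explanation already embedded in the statement.

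\textbf{Step 1: the geometric bound.} Inequality \eqref{eq:projective-ratio-geometric-theorem} is already derived in the text preceding the theorem. It combines $\mathcal Q_k=\omega_k\max(\theta_k,\mathcal R_0^{\max})$, the definition of $\mathcal J$, and $\mathcal J(W_k,\omega_k)\le q_\gamma^k$ from Theorem~\ref{thm:ofas-longstep} or Corollary~\ref{cor:caofas-complexity-inheritance}. For the term $\mathcal Q_k/\theta_k$ I check the two cases of the max. If $\theta_k$ attains it, then $\mathcal Q_k/\theta_k=\omega_k$. Otherwise $\mathcal Q_k/\theta_k=\omega_k\mathcal R_0^{\max}/\theta_k\le\mathcal Q_0\,\omega_k/\theta_k$. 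The term $\mathcal Q_k/\theta_k^2$ is handled the same way. So this step is only a restatement.

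\textbf{Step 2: boundedness of $\{Z_k\}$.} Since $\mathcal Q_k/\theta_k\to0$, from some index on the hypothesis of the second part of \eqref{eq:projective-scale-bound-main} holds, giving $\tau_k\ge\nu\theta_k/(2\mathfrak D_*)$. I then divide \eqref{eq:projective-trace-bound-main} by $\tau_k$. This bounds $\operatorname{tr}\bar X_k+\operatorname{tr}\bar S_k$ by $(\norm{b}+\norm{C})/\delta_{\rm rec}$ plus a multiple of $\mathcal Q_k/\tau_k\le 2\mathfrak D_*\mathcal Q_k/(\nu\theta_k)\to0$. Because $\bar X_k,\bar S_k\succeq O$, the Frobenius norms are bounded by the traces. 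For $\bar y_k$, I write $\Ast(\bar y_k)=C-\bar S_k+\Rd(W_k)/\tau_k$. Surjectivity of $\A$ makes $\Ast$ injective with a bounded left inverse, so $\bar y_k$ is bounded. The finitely many early indices have $\tau_k>0$ and contribute finitely many points.

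\textbf{Step 3: distance to $\mathcal Z^*$.} I argue by contradiction. Suppose a subsequence stays at distance at least $\epsilon$ from $\mathcal Z^*$. By boundedness, extract a further convergent subsequence with limit $(\hat X,\hat y,\hat S)$. Proposition~\ref{prop:quantitative-normalization-longstep}, applied with $\varepsilon=\mathcal Q_k$, gives residuals at most $\mathcal Q_k/\tau_k\to0$ and complementarity at most $\nu\mathcal Q_k/\tau_k^2\le 4\mathfrak D_*^2\mathcal Q_k/(\nu\theta_k^2)\to0$. Hence the limit is primal and dual feasible with $\hat X\bullet\hat S=0$, and $\hat X,\hat S\succeq O$ by closedness. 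The gap $C\bullet\hat X-b^T\hat y=\hat X\bullet\hat S=0$, and weak duality then shows both are optimal, so the limit lies in $\mathcal Z^*$. This contradicts the assumed distance bound. The same gap estimate yields $C\bullet\bar X_k-b^T\bar y_k\to0$. Combining this with convergence of the subsequences and the value $p^*$ gives $C\bullet\bar X_k\to p^*$: every subsequential limit of $C\bullet\bar X_k$ equals $p^*$, and the sequence is bounded. In the singleton case, every subsequence has the same limit, which gives convergence of the whole sequence.

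\textbf{Step 4: the iteration bound $K_\delta$.} The argument is the one spelled out in the statement. For $k\ge K_\delta$ we have $\mathcal Q_0q_\gamma^k\le\Delta_\delta$, and then Lemma~\ref{lem:projective-scale-estimates-main} and Proposition~\ref{prop:quantitative-normalization-longstep} give $\mathcal Q_{\rm SDP}(W_k)\le\delta/2+\delta/2$. The estimate $K_\delta=O(\nu\log(1/\delta))$ follows from $-\log q_\gamma\ge\widehat c_\gamma/\nu$ and $\log(1/\Delta_\delta)=O(\log(1/\delta))$.

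The main obstacle I expect is the limit step in Step 3. It needs a uniform relation between $\tau_k$ and $\theta_k$, which is exactly what the $\mathcal J$-based control provides. It also needs care that the bound holds only eventually, which matters for both boundedness and the limit argument.
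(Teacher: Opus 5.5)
Your proposal is correct and follows essentially the same route as the paper's proof. Both use the eventual bound $\tau_k\ge\nu\theta_k/(2\mathfrak D_*)$ supplied by the $\mathcal J$-control, divide the trace estimate by $\tau_k$, use injectivity of $\Ast$ for $\bar y_k$, run a subsequence argument for accumulation points in $\mathcal Z^*$, and apply the $\Delta_\delta$ threshold to get $K_\delta$. The only cosmetic differences are that you get the objective limits from subsequential limits rather than the paper's nearest-point argument in the compact set $\mathcal Z^*$, and you derive the zero gap from $\hat X\bullet\hat S=0$ plus feasibility rather than directly from the normalized gap estimate.
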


\begin{proof}
Equation~\eqref{eq:projective-ratio-geometric-theorem} implies
$\mathcal Q_k/\theta_k\to0$ and $\mathcal Q_k/\theta_k^2\to0$.
Thus \eqref{eq:projective-scale-bound-main} applies for all sufficiently large $k$, and
\begin{align*}
 \frac{\mathcal Q_k}{\tau_k}
 &\le\frac{2\mathfrak D_*}{\nu}\frac{\mathcal Q_k}{\theta_k}\to0,
\\
 \frac{\nu \mathcal Q_k}{(\tau_k)^2}
 &\le\frac{4\mathfrak D_*^2}{\nu}\frac{\mathcal Q_k}{\theta_k^2}\to0.
\end{align*}
Proposition~\ref{prop:quantitative-normalization-longstep} then shows that the primal residual, dual residual, complementarity, and duality gap of the normalized point all converge to zero.

Next, dividing \eqref{eq:projective-trace-bound-main} by $\tau_k$ gives
\[
 \operatorname{tr}\bar X_k+\operatorname{tr}\bar S_k
 \le
 \frac{\norm{b}+\norm{C}}{\delta_{\rm rec}(Z^*)}
 +\frac{3+\norm{y^*}+\norm{X^*}}{\delta_{\rm rec}(Z^*)}
  \frac{\mathcal Q_k}{\tau_k}.
\]
Hence $\{\bar X_k\}$ and $\{\bar S_k\}$ are bounded.
Moreover,
$\Ast(\bar y_k)=C-\bar S_k+\Rd(W_k)/\tau_k$ has a bounded right-hand side.
Assumption~\ref{ass:surjectivity} makes $\Ast$ injective, so $\{\bar y_k\}$ is also bounded.
Thus $\{Z_k\}$ is bounded.

By closedness of the PSD cone and convergence of the KKT residuals, every accumulation point is primal-dual feasible and has zero duality gap.
Hence every accumulation point belongs to ${\mathcal Z^*}$.
A standard subsequence argument for bounded sequences then gives \eqref{eq:distance-to-optimal-set}.

Now suppose that $\mathcal Q_0q_\gamma^k\le\Delta_\delta$.
Equation~\eqref{eq:projective-ratio-geometric-theorem} and
$\Delta_\delta\le\nu/(2\mathfrak H_*)$ allow us to apply \eqref{eq:projective-scale-bound-main}, giving
\begin{align*}
 \frac{\mathcal Q_k}{\tau_k}
 &\le\frac{2\mathfrak D_*}{\nu}\Delta_\delta\le\frac\delta2,
 &
 \frac{\nu \mathcal Q_k}{(\tau_k)^2}
 &\le\frac{4\mathfrak D_*^2}{\nu}\Delta_\delta\le\frac\delta2.
\end{align*}
Proposition~\ref{prop:quantitative-normalization-longstep} then gives
$\mathcal Q_{\rm SDP}(W_k)\le\delta$.
The definition of $K_\delta$ in \eqref{eq:conditioned-normalized-iteration-bound} guarantees
$\mathcal Q_0q_\gamma^k\le\Delta_\delta$ for every $k\ge K_\delta$.
The upper bound in the same equation follows from
$q_\gamma=1-\widehat c_\gamma/\nu$ and $-\log(1-x)\ge x$.
For a fixed problem, $\Delta_\delta=\Theta(\delta)$ as $\delta\downarrow0$, which proves \eqref{eq:normalized-accuracy-complexity}.

Finally, by \eqref{eq:distance-to-optimal-set} and compactness of ${\mathcal Z^*}$, choose a nearest point $Z_k^*\in{\mathcal Z^*}$ to $Z_k$.
Then $\|Z_k-Z_k^*\|_{\rm pd}\to0$, which gives convergence of the objective values.
If ${\mathcal Z^*}$ is a singleton, the normalized iterates converge to its unique point.
\end{proof}

\begin{remark}[Multiple PSD blocks]
\label{rem:multiple-psd-block-extension}
For simplicity, this paper uses a single PSD block, but the same analysis extends to multiple PSD blocks through the standard product-cone treatment.
NT scaling and the Jordan-algebra operations are applied blockwise \citep{NesterovTodd1997,NesterovTodd1998}.
If the $J$ PSD blocks have sizes $n_1,\ldots,n_J$, the rank of the product cone is $\nu=1+\sum_{j=1}^J n_j$.
Hence the analysis in this section and the iteration bound $O(\nu\log(1/\varepsilon))$ extend directly to the multiple-block case.
\end{remark}

\section{Numerical Experiments}
\label{sec:numerical}

The numerical experiments address two questions. First, we examine the computational effects of the post-arc corrector, the one-factorization structure, and curvature amplification. Second, we compare the resulting \CAOFAS{} method with the standard Mehrotra-type predictor--corrector method in Clarabel.jl \citep{GoulartChen2026,Mehrotra1992} and a Liu-type method, namely, our HSD/NT implementation based on the wide-neighborhood second-order corrector method of Liu--Liu (2012) \citep{LiuLiu2012}. We use 54 test problems in total: 18 SDPLIB problems and 36 NNV-SDP problems. We report the number of successful problems including Low Accuracy, the number reaching Full Accuracy, computation times, outer iteration counts, and performance profiles.

\subsection{Experimental Setting and Compared Methods}

Our implementation is built on the HSD interior-point framework of Clarabel.jl. We compare six methods: \OFAS{}, \CAOFAS{}, Two-factorization, Corrector-free, Mehrotra, and the Liu-type method. Two-factorization follows essentially the same arc-search rule as \OFAS{}, but after selecting the OFAS arc point, it updates the NT scaling and refactorizes the KKT coefficient matrix at that point before computing the post-arc corrector. Corrector-free is a one-factorization reference method without an independent post-arc corrector, and its centering parameter is fixed at 0.10. For the overall comparison, we use Mehrotra, the standard method in Clarabel.jl, and an HSD/NT adaptation of the wide-neighborhood second-order corrector method of Liu--Liu (2012). We refer to the latter as the Liu-type method throughout this section.

The experiments were run with Julia 1.12.6 on a Linux node equipped with two 2.90 GHz Intel Xeon Gold 6226R processors and 192 GB RAM. Julia itself ran with one thread, whereas BLAS and OpenMP each used eight threads. We used CHOLMOD for direct linear solves and disabled chordal decomposition. The six methods shared the same high-accuracy gap and feasibility tolerances of $10^{-8}$ and a 3600-second limit per instance.

The computational experiments are designed to isolate the effect of the proposed step-selection strategy within a common implementation framework. All Clarabel-based methods therefore use the same linear-algebra backend and solver settings, and the reported computation times are intended for relative comparison among these methods.

We use MathOptInterface (MOI) termination statuses. Following the terminology used in the numerical evaluation of Clarabel \citep{GoulartChen2026}, we classify \texttt{OPTIMAL} as Full Accuracy and \texttt{ALMOST\_OPTIMAL} as Low Accuracy. Low Accuracy or better is regarded as success, while pairwise comparisons of computation time and outer iterations use only problems for which both methods reached Full Accuracy. Detailed termination criteria are given in Appendix~\ref{app:detailed-numerical-results}.

To reduce timing variability, all six methods were run three times on each eligible problem, and we report the median computation time and outer iteration count. Repetition was used only when all six methods succeeded in the first run, all computation times exceeded one second, and the maximum computation time was below 120 seconds; otherwise, only one run was used. We also prespecified that a problem would be excluded from the computation-time comparison if any successful method finished in one second or less. None of the final 54 problems met this exclusion condition.

The computation times and outer iteration counts in Tables~\ref{tab:sdplib-summary} and \ref{tab:nnvsdp-summary} are geometric means over the problems successfully solved by each method, including Low Accuracy results. These values summarize the overall behavior on each problem set. For pairwise speed comparisons without mixing different accuracy levels, we instead use geometric means of paired ratios over problems for which both methods reached Full Accuracy. In the following, $t_A/t_B$ and $k_A/k_B$ denote the geometric means of the computation-time ratio and outer-iteration ratio, respectively, over these matched Full Accuracy problems. In mathematical subscripts, $\mathrm{CA}$ denotes \CAOFAS{}. Problem-by-problem computation times, outer iteration counts, and termination states are given in Appendix~\ref{app:detailed-numerical-results}.

For \OFAS{} and \CAOFAS{}, we set the wide long-step neighborhood parameter to $\gamma=0.01$. For \CAOFAS{}, we additionally used $\eta=1.75$ and $\Delta_\sigma=0.10$ for all benchmark problems. The \CAOFAS{} amplification parameters were selected using a 12-problem NNV-SDP core set and then kept unchanged for the other reported problems.

The \OFAS{} and \CAOFAS{} implementations use the same candidate formulas and acceptance conditions (C1)--(C4) as in Section~\ref{sec:methods}. The theoretical algorithms use the successive-halving fallback search without an a priori limit in order to obtain the worst-case guarantee, whereas the numerical code uses a finite safeguarded version. The implementation details of this search are given in Appendix~\ref{app:detailed-numerical-results}.

\subsection{Test Problem Sets}
\label{subsec:test-problem-sets}

We use two types of semidefinite programming test problems. The first consists of 18 standard benchmark problems from SDPLIB \citep{Borchers1999}. These 18 problems were fixed before the reported benchmark runs by selecting problems whose preliminary computation times were between 1 and 3600 seconds.

The second consists of 36 NNV-SDP problems arising from neural network verification. The SDP construction and the ellipsoid and rectangle input sets are taken from Azuma--Kim--Yamashita \citep{AzumaKimYamashita2026}. Let $r=n_0=n_1$ denote the network width used to generate each problem. The order of the main PSD block is $n_{\rm PSD}=1+2r$. In SDPA standard form, the number of equality constraints is $m=3r+2$ for the ellipsoid family and $m=4r+1$ for the rectangle family. We use both dense and identity settings for the weight matrices. The ranges of $(n_{\rm PSD},m)$ are $(81,122)$--$(113,170)$ for ellipsoid/dense, $(65,129)$--$(97,193)$ for rectangle/dense, and $(81,161)$--$(129,257)$ for rectangle/identity. Problem-by-problem specifications and results are reported in Appendix~\ref{app:detailed-numerical-results}.

\subsection{Numerical Comparison of Algorithmic Components}
\label{subsec:component-comparison}

Table~\ref{tab:component-paired-comparisons} compares computation time and outer iterations on the instances where both methods attained Full Accuracy in each component experiment. On these identical instance sets, we additionally record KKT factorization counts, linear-solve counts, and the time used by those two linear-algebra operations.

We first compare \OFAS{} with Corrector-free to examine the role of the post-arc corrector. The corrector requires an additional linear-system solve in each outer iteration. On both problem sets, \OFAS{} consequently used more linear-system solves than Corrector-free, but required fewer outer iterations and KKT factorizations.

For the matched Full Accuracy problems, these reductions were accompanied by lower total computation times. The geometric-mean computation time of \OFAS{} was about 37\% lower on SDPLIB and about 42\% lower on NNV-SDP; \OFAS{} was faster on 10 of the 11 matched SDPLIB problems and on all 25 matched NNV-SDP problems. Thus, on the tested problems, the additional solve per outer iteration did not increase the overall computational cost, while the number of outer iterations was reduced.

We next compare \OFAS{} with Two-factorization to evaluate the one-factorization structure. Although \OFAS{} required slightly more outer iterations on SDPLIB and more outer iterations on NNV-SDP, it was faster on every matched problem in both problem sets. The geometric-mean computation time was reduced by about 44\% on SDPLIB and about 38\% on NNV-SDP.

This difference is also reflected in the linear-algebra operations. Relative to Two-factorization, \OFAS{} reduced the number of KKT factorizations and the factorization time by about 48\% on SDPLIB and about 40\% on NNV-SDP. The number of linear-system solves was about 13\% smaller on SDPLIB and nearly unchanged on NNV-SDP. These results support the computational benefit of reusing a single KKT factorization for the post-arc corrector.

Finally, we compare \CAOFAS{} with \OFAS{} to assess the additional effect of curvature amplification. On SDPLIB, the change was modest: the geometric-mean ratios for outer iterations and computation time were 0.964 and 0.963, respectively. On NNV-SDP, the corresponding ratios were 0.808 and 0.811, showing reductions of about 19\% in both measures. \CAOFAS{} was faster on 7 of the 11 matched SDPLIB problems and on 28 of the 31 matched NNV-SDP problems. The similar iteration and time ratios on NNV-SDP suggest that most of the observed reduction in computation time came from fewer outer iterations, while the average cost per outer iteration remained similar.

\begin{table}[t]
\centering
\caption{Paired comparisons on problems solved to Full Accuracy by both methods}
\label{tab:component-paired-comparisons}
\small
\begin{tabular}{llrrr}
\toprule
Problem set & Comparison $A/B$ & Matched & $t_A/t_B$ & $k_A/k_B$ \\
\midrule
SDPLIB 18 & \OFAS/Corrector-free & 11 & 0.629 & 0.597 \\
 & \OFAS/Two-factorization & 11 & 0.557 & 1.048 \\
 & \CAOFAS/\OFAS & 11 & 0.963 & 0.964 \\
\addlinespace
NNV-SDP 36 & \OFAS/Corrector-free & 25 & 0.575 & 0.548 \\
 & \OFAS/Two-factorization & 26 & 0.623 & 1.202 \\
 & \CAOFAS/\OFAS & 31 & 0.811 & 0.808 \\
\bottomrule
\end{tabular}
\begin{minipage}{0.96\linewidth}
\footnotesize Matched gives the number of problems for which both methods reached Full Accuracy. $t_A/t_B$ and $k_A/k_B$ are the geometric means of the computation-time ratio and iteration-count ratio, respectively, over these problems. A ratio below one favors the method in the numerator.
\end{minipage}
\end{table}
\FloatBarrier

\subsection{Comparison with Existing Methods}
\label{subsec:existing-method-comparison}

We next compare \CAOFAS{} with Mehrotra and the Liu-type method to assess its overall performance relative to these methods. Table~\ref{tab:existing-paired-comparisons} reports the paired computation-time and outer-iteration ratios on problems for which both methods reached Full Accuracy.

\begin{table}[htbp]
\centering
\caption{Paired comparisons of \CAOFAS{} with Mehrotra and the Liu-type method on problems for which both methods reached Full Accuracy}
\label{tab:existing-paired-comparisons}
\small
\begin{tabular}{llrrr}
\toprule
Problem set & Comparison $A/B$ & Matched & $t_A/t_B$ & $k_A/k_B$ \\
\midrule
SDPLIB 18 & \CAOFAS/Mehrotra & 14 & 0.830 & 0.804 \\
 & \CAOFAS/Liu-type & 14 & 0.797 & 0.766 \\
\addlinespace
NNV-SDP 36 & \CAOFAS/Mehrotra & 31 & 0.869 & 0.848 \\
 & \CAOFAS/Liu-type & 33 & 0.882 & 0.864 \\
\bottomrule
\end{tabular}
\begin{minipage}{0.96\linewidth}
\footnotesize Matched gives the number of problems for which both methods reached Full Accuracy. $t_A/t_B$ and $k_A/k_B$ are the geometric means of the computation-time ratio and iteration-count ratio, respectively, over these problems. A ratio below one favors \CAOFAS{}.
\end{minipage}
\end{table}
\FloatBarrier

\subsubsection{SDPLIB}

Table~\ref{tab:sdplib-summary} summarizes the results for \CAOFAS{}, Mehrotra, and the Liu-type method. All three methods succeeded on all 18 SDPLIB problems, so their geometric means of computation time and outer iteration count are based on the same problem set. \CAOFAS{} reached Full Accuracy on 16 problems, while Mehrotra and the Liu-type method each reached Full Accuracy on 15. The geometric-mean computation time was 151.6 seconds for \CAOFAS{}, compared with 178.7 seconds for Mehrotra and 249.9 seconds for the Liu-type method. On the matched Full Accuracy problems, \CAOFAS{} reduced the geometric-mean outer iteration count by about 20\% relative to Mehrotra and about 23\% relative to the Liu-type method, with corresponding computation-time reductions of about 17\% and 20\%, respectively. It was also faster than the Liu-type method on all 14 matched problems. Thus, for these SDPLIB problems, the lower computation times of \CAOFAS{} are accompanied by fewer outer iterations.

\begin{table}[t]
\centering
\caption{Aggregated results for 18 SDPLIB problems}
\label{tab:sdplib-summary}
\small
\begin{tabular}{lrrrr}
\toprule
Method & Success & Full Accuracy & Time (s) & Iterations \\
\midrule
\CAOFAS & 18/18 & 16/18 & 151.6 & 16.21 \\
Mehrotra & 18/18 & 15/18 & 178.7 & 20.06 \\
Liu-type & 18/18 & 15/18 & 249.9 & 28.44 \\
\bottomrule
\end{tabular}
\begin{minipage}{0.92\linewidth}
\footnotesize Success includes Low Accuracy, and Full Accuracy gives the number of problems with status \texttt{OPTIMAL}. Time and Iterations are geometric means over the problems successfully solved by each method, including Low Accuracy results. For SDPLIB, the three methods have the same success set, so these geometric means are based on the same 18 problems.
\end{minipage}
\end{table}

\subsubsection{NNV-SDP}

Table~\ref{tab:nnvsdp-summary} summarizes the results for the 36 NNV-SDP problems. The Liu-type method succeeded on 35 of the 36 problems, compared with 34 for both \CAOFAS{} and Mehrotra. The geometric-mean computation time over each method's successful problems was 605.7 seconds for \CAOFAS{}, 734.7 seconds for Mehrotra, and 709.9 seconds for the Liu-type method; because the success sets are not identical, these aggregate times are descriptive and are not used for direct pairwise comparison. On the matched Full Accuracy problems, \CAOFAS{} reduced the geometric-mean outer iteration count by about 15\% relative to Mehrotra and about 14\% relative to the Liu-type method. The corresponding computation-time reductions were about 13\% and 12\%, and \CAOFAS{} was faster on every matched problem in both comparisons. The similar reductions in iteration count and computation time indicate that the time advantage on the matched NNV-SDP problems is closely associated with the smaller number of outer iterations.

\begin{table}[t]
\centering
\caption{Aggregated results for 36 NNV-SDP problems}
\label{tab:nnvsdp-summary}
\small
\begin{tabular}{lrrrr}
\toprule
Method & Success & Full Accuracy & Time (s) & Iterations \\
\midrule
\CAOFAS & 34/36 & 33/36 & 605.7 & 27.37 \\
Mehrotra & 34/36 & 33/36 & 734.7 & 32.50 \\
Liu-type & 35/36 & 35/36 & 709.9 & 32.15 \\
\bottomrule
\end{tabular}
\begin{minipage}{0.92\linewidth}
\footnotesize Success includes Low Accuracy, and Full Accuracy gives the number of problems with status \texttt{OPTIMAL}. Time and Iterations are geometric means over the problems successfully solved by each method, including Low Accuracy results. Pairwise comparisons with Mehrotra and the Liu-type method use the matched Full Accuracy ratios in Table~\ref{tab:existing-paired-comparisons}.
\end{minipage}
\end{table}

\subsection{Performance Profiles for Iteration Count and Computation Time}
\label{subsec:performance-profiles}

{We complement the paired tables with performance profiles in the sense of Dolan--Mor\'e \citep{DolanMore2002}. An earlier ratio-to-best display of Tits and Yang \citep[pp.~1442--1443]{TitsYang1996} plotted cumulative counts against a logarithmic performance-ratio axis. Dividing those counts by the number of test instances yields the same empirical cumulative form as a performance profile.} The paired tables compare identical Full Accuracy instance sets directly; the profiles instead summarize iteration and time ratios across the entire test set and also incorporate Low Accuracy successes. All six implementations are included. Let $\mathcal I$ be the problem set, and let $\mathcal S_p$ be the set of methods that reached Low Accuracy or better on problem $p\in\mathcal I$. Let $k_{p,s}$ and $t_{p,s}$ denote the outer iteration count and computation time, respectively, for method $s$. When $\mathcal S_p\neq\varnothing$, define
\[
 r_{p,s}^{\rm iter}=\begin{cases}
 \displaystyle\frac{k_{p,s}}{\min_{s'\in\mathcal S_p}k_{p,s'}}, & s\in\mathcal S_p,\\[2mm]
 +\infty, & s\notin\mathcal S_p,
 \end{cases}
 \qquad
 r_{p,s}^{\rm time}=\begin{cases}
 \displaystyle\frac{t_{p,s}}{\min_{s'\in\mathcal S_p}t_{p,s'}}, & s\in\mathcal S_p,\\[2mm]
 +\infty, & s\notin\mathcal S_p.
 \end{cases}
\]
If all methods fail on a problem, we set all performance ratios to $+\infty$ and still include that problem in $|\mathcal I|$. For $q\in\{\mathrm{iter},\mathrm{time}\}$, the performance profile is
\[
 \varrho_s^q(\vartheta)=\frac{1}{|\mathcal I|}\bigl|\{p\in\mathcal I:r_{p,s}^q\le\vartheta\}\bigr|,\qquad \vartheta\ge1.
\]
Hence $\varrho_s^q(1)$ gives the share of instances where method $s$ attains the best observed value of measure $q$. As $\vartheta$ becomes large, the profile approaches the success fraction of that method.

Figure~\ref{fig:performance-profile-sdplib} shows the profiles for the 18 SDPLIB problems. For \CAOFAS{}, the time profile is more concentrated near a performance ratio of one than the iteration profile, indicating that differences in outer iteration count do not translate directly into the same differences in total computation time. \CAOFAS{} was fastest on 9 of the 18 problems and required no more than 1.11 times the best time on every problem. Thus, the profile complements the paired comparisons by showing that its computation time remains close to the best observed performance throughout the SDPLIB set.

\begin{figure}[H]
\centering
\includegraphics[width=0.98\linewidth]{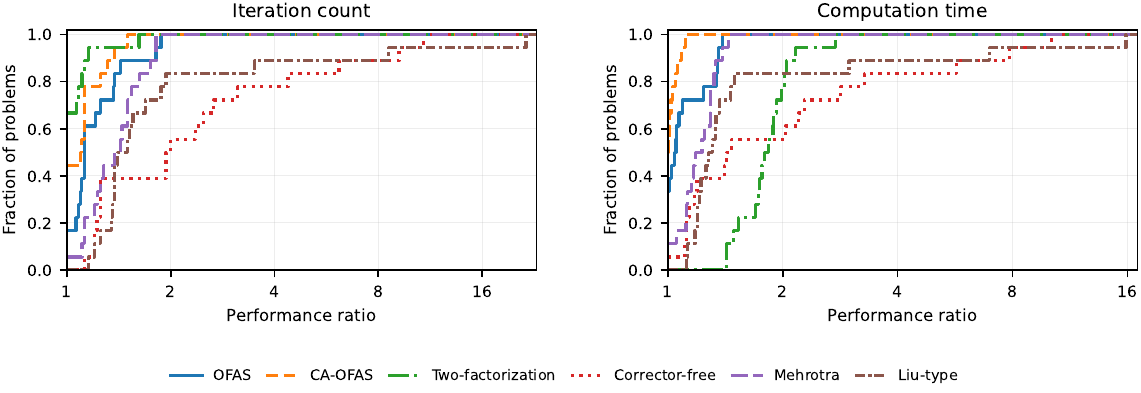}
\caption{Performance profiles for the six implementations on 18 SDPLIB problems (success includes Low Accuracy; left: outer iteration count; right: computation time; the performance ratio is $+\infty$ for unsuccessful problems; logarithmic horizontal axis)}
\label{fig:performance-profile-sdplib}
\end{figure}

Figure~\ref{fig:performance-profile-nnvsdp} shows the profiles for the 36 NNV-SDP problems. \CAOFAS{} was fastest on 31 problems and required no more than 1.03 times the best time on all 34 problems that it successfully solved; its iteration profile is likewise concentrated near one. The Liu-type method succeeded on 35 problems, compared with 34 for \CAOFAS{}, and therefore has a slightly higher limiting profile value. These profiles complement the aggregate results by showing that \CAOFAS{} remains close to the best observed performance on its successful problems, while the Liu-type method succeeds on one additional problem.

\begin{figure}[H]
\centering
\includegraphics[width=0.98\linewidth]{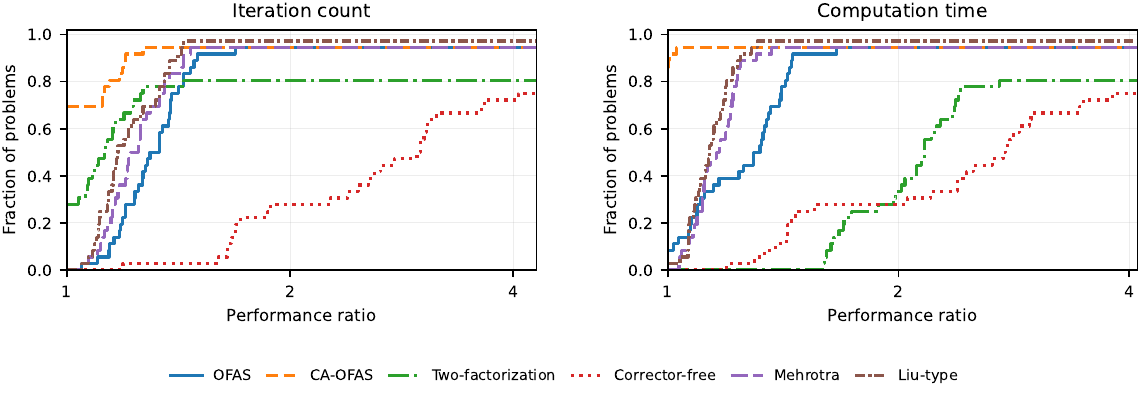}
\caption{Performance profiles for the six implementations on 36 NNV-SDP problems (success includes Low Accuracy; left: outer iteration count; right: computation time; the performance ratio is $+\infty$ for unsuccessful problems; logarithmic horizontal axis)}
\label{fig:performance-profile-nnvsdp}
\end{figure}

The paired comparisons clarify the contributions of the proposed design. \OFAS{} already provides substantial reductions in computation time over the structural reference methods: 44\% and 38\% relative to Two-factorization, and 37\% and 42\% relative to Corrector-free, on SDPLIB and NNV-SDP, respectively. Furthermore, \CAOFAS{} reduces computation time relative to \OFAS{} by an additional 4\% on SDPLIB and 19\% on NNV-SDP through curvature amplification. \CAOFAS{} also improves on the Mehrotra-type and Liu-type methods: its computation time is reduced by 17\% and 13\% relative to Mehrotra, and by 20\% and 12\% relative to Liu, on SDPLIB and NNV-SDP, respectively. These results show substantial gains from the one-factorization structure and post-arc correction, while curvature amplification provides a further improvement, particularly on NNV-SDP.
\FloatBarrier

\section{Conclusion}
\label{sec:conclusion}

This work introduced \OFAS{}, a one-factorization arc-search method that keeps both the post-arc correction and flexible candidate selection without refactorizing the KKT matrix at the accepted point.
The method and its analysis are formulated within an HSD embedding.
We also introduced \CAOFAS{}, which selectively amplifies the arc term associated with the second-order direction while keeping the same one-factorization structure.
If an amplified candidate is not accepted, \CAOFAS{} falls back to an \OFAS{} candidate and therefore inherits the same theoretical guarantees as \OFAS{}.

For the theoretical analysis, we derived direction and centering estimates in a wide long-step neighborhood.
Based on these estimates, we showed that the guaranteed candidate-selection rule finds an admissible candidate after finitely many halvings of the angle and damping parameter.
We also established an iteration bound of $O(\nu\log(1/\varepsilon))$ for both \OFAS{} and \CAOFAS{}.
Under Assumption~\ref{ass:primal-dual-solvability}, the distance from the projectively normalized HSD sequence to the primal--dual optimal set was also shown to converge to zero.
This analysis also gave a relative lower bound on the HSD scaling variable $\tau_k$ with respect to the homogeneous scale $\theta_k$.

In the numerical experiments, we compared six methods on SDPLIB and NNV-SDP.
Relative to Two-factorization, \OFAS{} reduced both the KKT factorization count and factorization time by about 48\% on SDPLIB and 40\% on NNV-SDP. \CAOFAS{} further improved upon \OFAS{} for many of the tested problems. These results indicate practical benefits from factorization reuse, post-arc correction, and curvature amplification.

Future work includes adaptive curvature amplification based on local curvature information, sparse SDP processing based on chordal decomposition and positive definite matrix completion, and evaluation of these ideas in high-performance implementations
\citep{FukudaKojimaMurotaNakata2001,NakataFujisawaFukudaKojimaMurota2003,YamashitaNakata2015}.
Another direction is to study the relationship between the conditioning of the HSD KKT systems and practical computational performance.

\appendix
\renewcommand{\thetheorem}{\Alph{section}.\arabic{theorem}}
\section{Proofs of Technical Lemmas}
\label{app:technical-proofs}

\subsection{Nonsingularity of the Current-Iterate KKT Operator}
\label{app:kkt-nonsingularity}

We prove Lemma~\ref{lem:current-kkt-nonsingular-longstep}.

\begin{proof}
Fix an arbitrary interior HSD point and denote it by $W_k$.
We show that the homogeneous system
\(\mathcal L(W_k)[D]=(O_{\mathcal F},\bar O), \qquad D=(D_X,d_y,D_S,d_\tau,d_\kappa)\)
has only the zero solution $D=0$.
For brevity, set
\(P:=P(D;W_k),\qquad R:=R(D;W_k).\)
Since $P(W_k;W_k)=R(W_k;W_k)=Q_k^{\rm NT}$, the complementarity row is
\(\mathcal B(W_k,D;W_k)=Q_k^{\rm NT}\circ(P+R)=\bar O.\)
Since $Q_k^{\rm NT}\in\Ssym_{++}^n\times\R_{++}$, the Jordan multiplication operator
$L_{Q_k^{\rm NT}}:U\mapsto Q_k^{\rm NT}\circ U$ is invertible.
Indeed, let $q_1,\ldots,q_n>0$ be the eigenvalues of the PSD block $Q_{X,k}$. After an orthogonal diagonalization of $Q_{X,k}$, $L_{Q_k^{\rm NT}}$ multiplies each matrix component by the positive scalar $(q_i+q_j)/2$. On the scalar block, it also acts by multiplication with a positive scalar.
Therefore,
\begin{equation}
 P+R=\bar O.
 \label{eq:kkt-kernel-p-plus-r-zero}
\end{equation}

The three HSD residual rows give
\begin{align*}
 \A(D_X)-bd_\tau&=0,\\
 \Ast(d_y)+D_S-Cd_\tau&=0,\\
 C\bullet D_X-b^Td_y+d_\kappa&=0.
\end{align*}
The second equation gives $D_S=Cd_\tau-\Ast(d_y)$. Hence, using the first equation and the adjoint relation,
\begin{align*}
 D_X\bullet D_S
 &=d_\tau C\bullet D_X-D_X\bullet\Ast(d_y)
 =d_\tau C\bullet D_X-\A(D_X)^Td_y\\
 &=d_\tau\{C\bullet D_X-b^Td_y\}
 =-d_\tau d_\kappa.
\end{align*}
Thus, $D_X\bullet D_S+d_\tau d_\kappa=0$.
Moreover, by the definition of the NT coordinates in \eqref{eq:direction-nt-components} and cyclicity of the trace,
\begin{align*}
 \langle P,R\rangle_{\mathcal E}
 &=\operatorname{tr}\!\left(
   M_kD_XM_k^T M_k^{-T}D_SM_k^{-1}\right)
   +(\xi_kd_\tau)(\xi_k^{-1}d_\kappa)\\
 &=D_X\bullet D_S+d_\tau d_\kappa
 =0.
\end{align*}
Equation~\eqref{eq:kkt-kernel-p-plus-r-zero} gives $R=-P$, so
\(0=\langle P,R\rangle_{\mathcal E}=-\norm{P}_{\mathcal E}^2.\)
Hence, $P=R=\bar O$.
Since $M_k$ and $\xi_k$ are invertible, we obtain
$D_X=D_S=O$ and $d_\tau=d_\kappa=0$.
The dual residual row then gives $\Ast(d_y)=O$.
By Assumption~\ref{ass:surjectivity}, $\Ast$ is injective, and therefore $d_y=0$.
Thus, $\ker\mathcal L(W_k)=\{0\}$.
The current-iterate KKT system is finite-dimensional and square. Hence, injectivity implies bijectivity, and $\mathcal L(W_k)$ is nonsingular.
\end{proof}

\subsection{Technical Estimates for Uniformly Guaranteed OFAS Candidates}
\label{app:guaranteed-ofas-candidate}

This subsection gives the technical estimates for the fixed choice $\rho_0=1/8$ used in Proposition~\ref{prop:ofas-guaranteed-search-window}.
We separate the constant estimates from the main argument and verify three facts: (i) $D_\zeta$ is uniformly bounded away from zero, (ii) strict interiority is preserved for $\beta=\Theta(1/\nu)$, and (iii) the wide long-step neighborhood is recovered on the same damping scale.
The constants below depend only on $\gamma$, the fixed value $\rho_0$, and the prescribed upper bound on the angle. They are independent of $\nu$ and the problem data.

The candidate set in Algorithm~\ref{alg:ofas} contains
$\rho\in\{1,1/2,1/4,1/8\}$.
The choice $\rho_0=1/8$ is used only to construct a theoretical guarantee and does not exclude the other values of $\rho$.
For the finite-termination proof, we fix the smallest value $\rho_0$ and show that the successive-halving fallback search reaches an admissible candidate when $\beta$ is on the $O(1/\nu)$ scale, while retaining an $O(1/\nu)$ angle.
This search requires no additional KKT solve. It uses only linear combinations of the correction directions already constructed and the unit-complementarity direction.

Throughout this subsection, fix iteration $k$ and use the abbreviations
\[
 \begin{aligned}
 W&:=W_k,& \mu&:=\mu(W_k),& Q&:=Q_k^{\rm NT},& H&:=Q\circ Q,\\
 \sigma&:=\sin\alpha,& \psi&:=1-\cos\alpha,
 &\mu_{\rm tar}&:=(1-\sigma)\mu.
 \end{aligned}
\]
In the NT coordinates defined by the current iterate $W_k$, write the predictor-arc factors and their displacements as
\[
 \begin{aligned}
 P(\alpha)&:=P(W_k(\alpha);W_k),&
 R(\alpha)&:=R(W_k(\alpha);W_k),\\
 \Delta P(\alpha)&:=P(\alpha)-Q,&
 \Delta R(\alpha)&:=R(\alpha)-Q.
 \end{aligned}
\]
We also write
$\Delta_{\rm arc}(\alpha):=W_k(\alpha)-W_k$ and
$H_{\rm arc}(\alpha):=P(\alpha)\circ R(\alpha)$.
The quantities $D_\zeta$, $N_\zeta$, and $\zeta$ are defined as in Lemma~\ref{lem:ofas-exact-update-identities}(iii) with $\rho=\rho_0$. We use the notation $d_\gamma(c)$, $r_\gamma(c)$, and $\mathcal E_{\rm arc}(\alpha)$ from Lemma~\ref{lem:wide-arc-remainder}.

Let $0<c\le1/2$ and assume $\sigma\le c/\nu$.
{For the correction direction $D_k^{\rm cor}(\alpha,\rho_0)$, define its NT components by}
\[
 (P^{\rm cor},R^{\rm cor})
 :=\bigl(P(D_k^{\rm cor}(\alpha,\rho_0);W_k),
 R(D_k^{\rm cor}(\alpha,\rho_0);W_k)\bigr).
\]
{By the definition in Section~\ref{sec:methods}, this correction direction is}
\[
 D_k^{\rm cor}(\alpha,\rho_0)
 =\rho_0(1-\sigma)D_k^{\rm cen}
  +\sigma\psi D_k^{\rm mix}
  +\psi^2D_k^{\rm rem}.
\]
Therefore, using the three squared-norm bounds in Proposition~\ref{prop:ofas-nt-scaled-bounds},
\[
 \sqrt{\norm{P_k^{\rm cen}}^2+\norm{R_k^{\rm cen}}^2}
 \le \sqrt{\nu(\gamma^{-1}-1)\mu},
\]
\[
 \sqrt{\norm{P_k^{\rm mix}}^2+\norm{R_k^{\rm mix}}^2}
 \le \frac{\nu^{3/2}}{\gamma}\sqrt\mu,
 \qquad
 \sqrt{\norm{P_k^{\rm rem}}^2+\norm{R_k^{\rm rem}}^2}
 \le \frac{1+\gamma}{2\gamma^{3/2}}\nu^2\sqrt\mu,
\]
and the inequality $\norm P+\norm R\le\sqrt2\sqrt{\norm P^2+\norm R^2}$, we obtain
\begin{align*}
 \norm{P^{\rm cor}}+\norm{R^{\rm cor}}
 &\le \sqrt2\left(
 \rho_0(1-\sigma)\sqrt{\nu(\gamma^{-1}-1)\mu}
 +\sigma\psi\frac{\nu^{3/2}}\gamma\sqrt\mu
 +\psi^2\frac{1+\gamma}{2\gamma^{3/2}}\nu^2\sqrt\mu
 \right).
\end{align*}
Since $0\le\psi=1-\cos\alpha\le\sin^2\alpha=\sigma^2$, $\sigma\le c/\nu$, and $\nu\ge1$, it follows that
\begin{equation*}
 \norm{P^{\rm cor}}+\norm{R^{\rm cor}}
 \le A_\gamma(c)\sqrt{\nu\mu},
\end{equation*}
where
\begin{equation*}
 A_\gamma(c)
 :=\sqrt2\left(
   \rho_0\sqrt{\gamma^{-1}-1}
   +\frac{c^3}{\gamma}
   +\frac{1+\gamma}{2\gamma^{3/2}}c^4
 \right).
\end{equation*}
{This gives the required bound for the correction direction $D_k^{\rm cor}(\alpha,\rho_0)$.}

Let $(P_k^{\rm unit},R_k^{\rm unit})$ be the NT components of the unit-complementarity direction $D_k^{\rm unit}$.
{Define also the complementarity-row target for $D_k^{\rm cor}(\alpha,\rho_0)$ by}
$T_0(\alpha):=(1-\sigma)\bigl((1-\rho_0)H+\rho_0\mu\bar I\bigr)$.
This is exactly $T_k(\alpha,\rho)$ from Section~\ref{sec:methods} with $\rho=\rho_0$.
The following exact cancellations show that $D_\zeta$ remains close to $\nu$ and that $\zeta$ is small.

\begin{lemma}[Cancellation in $\zeta$ for the fixed-$\rho_0$ corrector]
\label{lem:fixed-rho-zeta-cancellation}
For $D_k^{\rm cor}(\alpha,\rho_0)$, the following identities hold exactly:
\begin{align*}
 D_\zeta
 &=\nu+
   \langle\Delta P(\alpha),R_k^{\rm unit}\rangle
   +\langle P_k^{\rm unit},\Delta R(\alpha)\rangle,\\
 N_\zeta
 &=-\langle\Delta P(\alpha),R^{\rm cor}\rangle
   -\langle P^{\rm cor},\Delta R(\alpha)\rangle.
\end{align*}
\end{lemma}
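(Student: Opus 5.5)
Both identities come from writing the polarization $\Gamma$ in the NT coordinates of $W_k$ and then splitting $W_k(\alpha)=W_k+\Delta_{\rm arc}(\alpha)$. First note that for any two HSD variations $U,V$ the NT congruence preserves cross inner products, exactly as in the proof of Lemma~\ref{lem:current-kkt-nonsingular-longstep}: $U_X\bullet V_S=\langle M_kU_XM_k^T,M_k^{-T}V_SM_k^{-1}\rangle$ and $u_\tau v_\kappa=(\xi_ku_\tau)(\xi_k^{-1}v_\kappa)$. Hence
\[
 \Gamma(U,V)=\langle P(U;W_k),R(V;W_k)\rangle+\langle P(V;W_k),R(U;W_k)\rangle,
\]
and $\mathcal C(U)=\langle P(U;W_k),R(U;W_k)\rangle$. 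With $P(W_k;W_k)=R(W_k;W_k)=Q$, this gives $P(W_k(\alpha);W_k)=Q+\Delta P(\alpha)$ and $R(W_k(\alpha);W_k)=Q+\Delta R(\alpha)$.

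For $D_\zeta=\Gamma(W_k(\alpha),D_k^{\rm unit})$, I expand bilinearly to get
\[
 D_\zeta=\langle Q,P_k^{\rm unit}+R_k^{\rm unit}\rangle+\langle\Delta P(\alpha),R_k^{\rm unit}\rangle+\langle P_k^{\rm unit},\Delta R(\alpha)\rangle.
\]
The complementarity row of \eqref{eq:unit-complementarity-direction} says $Q\circ(P_k^{\rm unit}+R_k^{\rm unit})=\bar I$. Since $\langle\bar I,Q\circ U\rangle=\langle Q,U\rangle$ (trace associativity of the Jordan product), the first term equals $\langle\bar I,\bar I\rangle=\nu$. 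This is the first identity.

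For $N_\zeta$, I expand the same way:
\[
 \Gamma(W_k(\alpha),D_k^{\rm cor})=\langle Q,P^{\rm cor}+R^{\rm cor}\rangle+\langle\Delta P(\alpha),R^{\rm cor}\rangle+\langle P^{\rm cor},\Delta R(\alpha)\rangle .
\]
By \eqref{eq:corrector-direct-kkt-system}, $Q\circ(P^{\rm cor}+R^{\rm cor})=-\Delta_k^{\rm comp}(\alpha,\rho_0)$. So the first term is $-\langle\bar I,\Delta_k^{\rm comp}\rangle=-\langle\bar I,\Phi(W_k(\alpha);W_k)\rangle+\langle\bar I,T_0(\alpha)\rangle$. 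Now $\langle\bar I,\Phi(W_k(\alpha);W_k)\rangle=\langle P(\alpha),R(\alpha)\rangle=\mathcal C(W_k(\alpha))$. Also $\langle\bar I,T_0(\alpha)\rangle=(1-\sigma)\bigl((1-\rho_0)\operatorname{Tr}H+\rho_0\mu\nu\bigr)=(1-\sigma)\nu\mu=\nu\mu_{\rm tar}$, because $\operatorname{Tr}H=\nu\mu$. Substituting into the definition $N_\zeta=\nu\mu_{\rm tar}-\mathcal C(W_k(\alpha))-\Gamma(W_k(\alpha),D_k^{\rm cor})$, the terms $\nu\mu_{\rm tar}$ and $\mathcal C(W_k(\alpha))$ cancel. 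Only $-\langle\Delta P,R^{\rm cor}\rangle-\langle P^{\rm cor},\Delta R\rangle$ remains, which is the second identity.

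I expect the main point needing care to be the trace identity $\langle\bar I,Q\circ U\rangle=\langle Q,U\rangle$ on both blocks. On the PSD block it is $\operatorname{tr}\bigl((QU+UQ)/2\bigr)=\operatorname{tr}(QU)$, and on the scalar block it is trivial. The other point to check is the sign convention in $\mathcal G$, namely that the complementarity row is $-\Delta$. The rest is bookkeeping.
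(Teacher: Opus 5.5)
Your proposal is correct and follows the paper's proof essentially step for step. You express $\Gamma$ in the current-iterate NT coordinates, split $W_k(\alpha)=W_k+\Delta_{\rm arc}(\alpha)$, and take the trace of the complementarity rows of $D_k^{\rm unit}$ and $D_k^{\rm cor}(\alpha,\rho_0)$ (your identity $\langle\bar I,Q\circ U\rangle=\langle Q,U\rangle$ is the paper's $\operatorname{Tr}_{\mathcal E}(\mathcal B(U,V;W))=\Gamma(U,V)$), which yields $\nu$ and the exact cancellation of $\nu\mu_{\rm tar}-\mathcal C(W_k(\alpha))$, with the sign convention $Q\circ(P^{\rm cor}+R^{\rm cor})=-\Delta_k^{\rm comp}(\alpha,\rho_0)$ exactly as you state.
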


\begin{proof}
{For the scalar denominator $D_\zeta$ defined in Lemma~\ref{lem:ofas-exact-update-identities}(iii), the unit-complementarity direction gives $\bar I$ in the complementarity row in the current-iterate NT coordinates.} Hence, its cross complementarity with $W_k$ is
\(\Gamma(W,D_k^{\rm unit})=\operatorname{Tr}_{\mathcal E}(\bar I)=\nu.\)
Using the arc displacement $\Delta_{\rm arc}(\alpha)$ defined above and bilinearity, we obtain
\begin{align*}
 D_\zeta
 &=\Gamma(W_k(\alpha),D_k^{\rm unit})
 =\Gamma(W,D_k^{\rm unit})+\Gamma(\Delta_{\rm arc}(\alpha),D_k^{\rm unit})\\
 &=\nu+\langle\Delta P(\alpha),R_k^{\rm unit}\rangle
 +\langle P_k^{\rm unit},\Delta R(\alpha)\rangle.
\end{align*}
Thus, only the two arc-displacement cross terms perturb the denominator from $\nu$.

{
By \eqref{eq:current-corrector-solution-map}, the linearity of $\mathcal G(\cdot;W)$, and \eqref{eq:corrector-rhs-basis-paper},
\begin{align*}
 \mathcal B(W,D_k^{\rm cor}(\alpha,\rho_0);W)
 &=-\Delta_k^{\rm comp}(\alpha,\rho_0)\\
 &=T_0(\alpha)-\Phi(W_k(\alpha);W).
\end{align*}
Taking $\operatorname{Tr}_{\mathcal E}$ and using
$\operatorname{Tr}_{\mathcal E}(\Phi(V;W))=\mathcal C(V)$,
$\operatorname{Tr}_{\mathcal E}(\mathcal B(U,V;W))=\Gamma(U,V)$, and
$\operatorname{Tr}_{\mathcal E}(T_0(\alpha))=\nu\mu_{\rm tar}$ gives
\[
 \Gamma(W,D_k^{\rm cor}(\alpha,\rho_0))
 =\nu\mu_{\rm tar}-\mathcal C(W_k(\alpha)).
\]
}On the other hand, the exact numerator of $\zeta$ is
\begin{align*}
 N_\zeta
 &=\nu\mu_{\rm tar}-\mathcal C(W_k(\alpha))
 -\Gamma(W_k(\alpha),D_k^{\rm cor}(\alpha,\rho_0))\\
 &=\nu\mu_{\rm tar}-\mathcal C(W_k(\alpha))
 -\Gamma(W,D_k^{\rm cor}(\alpha,\rho_0))
 -\Gamma(\Delta_{\rm arc}(\alpha),D_k^{\rm cor}(\alpha,\rho_0)).
\end{align*}
The first three terms cancel exactly by the identity above, and hence
\(N_\zeta=-\Gamma(\Delta_{\rm arc}(\alpha),D_k^{\rm cor}(\alpha,\rho_0)).\)
Expanding this cross complementarity in the current-iterate NT coordinates gives
\[
 \Gamma(\Delta_{\rm arc}(\alpha),D_k^{\rm cor}(\alpha,\rho_0))
 =\langle\Delta P(\alpha),R^{\rm cor}\rangle
  +\langle P^{\rm cor},\Delta R(\alpha)\rangle.
\]
Therefore,
\[
 N_\zeta
 =-\langle\Delta P(\alpha),R^{\rm cor}\rangle
  -\langle P^{\rm cor},\Delta R(\alpha)\rangle.
\]
\end{proof}

We now use the following bounds for the arc displacement and the unit-complementarity direction:
\[
 \norm{\Delta P(\alpha)}+\norm{\Delta R(\alpha)}
 \le d_\gamma(c)\sqrt{\frac{\mu}{\nu}},
 \qquad
 \norm{P_k^{\rm unit}}+\norm{R_k^{\rm unit}}
 \le \sqrt{\frac{2\nu}{\gamma\mu}}.
\]
Applying Cauchy--Schwarz to
$D_\zeta-\nu
 =\langle\Delta P(\alpha),R_k^{\rm unit}\rangle
  +\langle P_k^{\rm unit},\Delta R(\alpha)\rangle$
gives
\(|D_\zeta-\nu|\le d_\gamma(c)\sqrt{2/\gamma}.\)
Hence, if $d_\gamma(c)\sqrt{2/\gamma}\le1/2$, then $\nu\ge1$ implies
$D_\zeta\ge\nu-1/2\ge\nu/2$.

Similarly, using the full-corrector bound
\(\norm{P^{\rm cor}}+\norm{R^{\rm cor}}\le A_\gamma(c)\sqrt{\nu\mu},\)
and the arc-displacement bound above to
$N_\zeta
 =-\langle\Delta P(\alpha),R^{\rm cor}\rangle
  -\langle P^{\rm cor},\Delta R(\alpha)\rangle$
gives
\(|N_\zeta|\le d_\gamma(c)A_\gamma(c)\mu.\)
Therefore,
\begin{equation*}
 |\zeta|
 \le 2d_\gamma(c)A_\gamma(c)\frac{\mu}{\nu}.
\end{equation*}
Let
${D^{\rm full}}:=D_k^{\rm cor}(\alpha,\rho_0)+\zeta D_k^{\rm unit}$
be the full composite correction, and define its NT components by
$({P^{\rm full}},{R^{\rm full}}):=(P({D^{\rm full}};W_k),R({D^{\rm full}};W_k))$.
Then,
\begin{equation}
 \norm{{P^{\rm full}}}+\norm{{R^{\rm full}}}
 \le B_\gamma(c)\sqrt{\nu\mu},
 \label{eq:fixed-rho-full-combined-bound}
\end{equation}
where
\begin{equation}
 B_\gamma(c)
 :=A_\gamma(c)\left(1+2d_\gamma(c)\sqrt{\frac2\gamma}\right).
 \label{eq:B-gamma-fixed-rho}
\end{equation}

For use in Lemmas~\ref{lem:fixed-rho-damped-positivity} and \ref{lem:fixed-rho-damped-restoration}, define
\begin{align*}
 \bar d_\gamma&:=\sqrt2+\sqrt{2/\gamma},
 &\bar r_\gamma&:=\gamma^{-1/2}+\frac{1+\gamma^{-1}}2,\\
 \overline A_\gamma
 &:=\sqrt2\left(
 \rho_0\sqrt{\gamma^{-1}-1}
 +\frac1{8\gamma}
 +\frac{1+\gamma}{32\gamma^{3/2}}
 \right),
 &\overline B_\gamma
 &:=\overline A_\gamma\left(1+2\bar d_\gamma\sqrt{2/\gamma}\right).
\end{align*}
Moreover, set
\begin{equation}
 b_\gamma
 :=\min\left\{
 1,
 \frac{\sqrt\gamma}{8\overline B_\gamma},
 \frac{\rho_0(1-\gamma)}{64\overline B_\gamma^2}
 \right\}>0,
 \label{eq:b-gamma-app}
\end{equation}
and
\begin{equation}
 c_\gamma
 :=\min\left\{
 \sigma_{\max},\frac12,
 \frac{\sqrt\gamma}{4\bar d_\gamma},
 \frac{\rho_0(1-\gamma)}{384\bar d_\gamma\overline B_\gamma},
 \left(\frac{b_\gamma\rho_0(1-\gamma)}{128\bar r_\gamma}\right)^{1/3},
 \sqrt{\frac{1-\sigma_{\max}}{64\bar r_\gamma}},
 \frac1{2\sqrt{\bar r_\gamma}}
 \right\}>0.
 \label{eq:c-gamma-app}
\end{equation}
These constants depend only on $\gamma$, the fixed value $\rho_0$, and $\sigma_{\max}$, and are independent of $\nu$ and the problem data.
For
$d_\gamma(c)=\sqrt2c+\sqrt{2/\gamma}\,c^2$ and
$r_\gamma(c)=c^3/\sqrt\gamma+c^4/2+c^4/(2\gamma)$
from Lemma~\ref{lem:wide-arc-remainder}, if $0<c\le1/2$, then
$d_\gamma(c)\le\bar d_\gamma c$ and $r_\gamma(c)\le\bar r_\gamma c^3$.
Also, $c^3\le1/8$ and $c^4\le1/16$ imply
$A_\gamma(c)\le\overline A_\gamma$.
Substituting $d_\gamma(c)\le\bar d_\gamma c\le\bar d_\gamma$ into
\(B_\gamma(c)=A_\gamma(c)(1+2d_\gamma(c)\sqrt{2/\gamma})\)
gives $B_\gamma(c)\le\overline B_\gamma$.
The three bounds in \eqref{eq:b-gamma-app} control, respectively, the damping range, strict interiority, and the quadratic correction term in neighborhood recovery.
The seven bounds in \eqref{eq:c-gamma-app} control, in order, the angle limit, the condition $c\le1/2$, $D_\zeta$ and strict interiority, the mixed-correction error, the cubic arc remainder, the projective quantity $\mathcal J$, and small-angle progress.
In particular, this choice gives $r_\gamma(c_\gamma)\le c_\gamma/4$.

\begin{lemma}[Strict interiority under fixed $\rho_0$ and $O(1/\nu)$ damping]
\label{lem:fixed-rho-damped-positivity}
Let $W\in\Nlong(\gamma)$, let
$0<\sigma\le c_\gamma/\nu$, and set $\rho=\rho_0$.
Assume further that
\begin{equation*}
 \frac{b_\gamma}{4\nu}
 \le\beta\le
 \frac{b_\gamma}{\nu}.
\end{equation*}
Then the uncorrected arc and the damped corrected point
$W^+(\beta)=W_k(\alpha)+\beta {D^{\rm full}}$ are strictly interior.
\end{lemma}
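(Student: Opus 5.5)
Interiority will be certified in the NT coordinates of the current iterate, where $W$ itself is represented by $P=R=Q$. Because $M_k$ and $\xi_k$ are invertible, a point $V$ is strictly interior if and only if $P(V;W)$ and $R(V;W)$ lie in $\operatorname{int}(\Ssym_+^n\times\R_+)$. By the spectral discussion in Proposition~\ref{prop:ofas-nt-scaled-bounds}, every spectral value of $Q$ is at least $\sqrt{\gamma\mu}$. Hence it suffices to show that every relevant perturbation of $Q$ has norm strictly below $\sqrt{\gamma\mu}$. We use that a symmetric perturbation $E$ with $\|E\|<\lambda_{\min}(Q)$ keeps $Q+E$ in the interior, which follows from Weyl's inequality blockwise, since the Frobenius norm dominates the spectral norm.

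\emph{Uncorrected arc.} Apply Lemma~\ref{lem:wide-arc-remainder} with $c=c_\gamma\le1/2$. It gives
\[
\|\Delta P(\alpha)\|+\|\Delta R(\alpha)\|\le d_\gamma(c_\gamma)\sqrt{\mu/\nu}\le \bar d_\gamma c_\gamma\sqrt{\mu}.
\]
By \eqref{eq:c-gamma-app}, $c_\gamma\le\sqrt\gamma/(4\bar d_\gamma)$, so this is at most $\tfrac14\sqrt{\gamma\mu}$. Therefore $P(\alpha)$ and $R(\alpha)$ both have spectral values at least $\tfrac34\sqrt{\gamma\mu}>0$, and the arc point is strictly interior.

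\emph{Damped corrected point.} Write $P(W^+(\beta);W)=P(\alpha)+\beta P^{\rm full}$, and similarly for $R$. The hypothesis $d_\gamma(c)\sqrt{2/\gamma}\le1/2$ needed for $D_\zeta\ge\nu/2$ also follows from the same constraint on $c_\gamma$. This makes $\zeta$ well defined, so bound \eqref{eq:fixed-rho-full-combined-bound} applies. Together with $B_\gamma(c_\gamma)\le\overline B_\gamma$ it gives
\[
\beta(\|P^{\rm full}\|+\|R^{\rm full}\|)\le \frac{b_\gamma}{\nu}\overline B_\gamma\sqrt{\nu\mu}\le b_\gamma\overline B_\gamma\sqrt\mu\le \tfrac18\sqrt{\gamma\mu},
\]
using $\nu\ge1$ and $b_\gamma\le\sqrt\gamma/(8\overline B_\gamma)$ from \eqref{eq:b-gamma-app}. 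The total perturbation from $Q$ is then at most $\tfrac38\sqrt{\gamma\mu}<\sqrt{\gamma\mu}$, so both NT factors of $W^+(\beta)$ are positive definite in the PSD block and positive in the scalar block. Pulling back by $M_k^{-1}$ and $\xi_k^{-1}$ gives $X^+,S^+\succ O$ and $\tau^+,\kappa^+>0$. Positivity of $\mu(W^+)$ follows immediately from interiority.

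\emph{Main obstacle.} The step that needs the most care is checking that the combined bound \eqref{eq:fixed-rho-full-combined-bound} is legitimately available. It depends on the $\zeta$ estimate, which requires the lower bound $D_\zeta\ge\nu/2$ and the bound on $|N_\zeta|$ from Lemma~\ref{lem:fixed-rho-zeta-cancellation}. I would verify explicitly that $\bar d_\gamma c_\gamma\sqrt{2/\gamma}\le\tfrac14\sqrt2<\tfrac12$ under \eqref{eq:c-gamma-app}. I would also keep the $\sqrt\nu$ factor in \eqref{eq:fixed-rho-full-combined-bound} cancelled by the $1/\nu$ in $\beta$. The lower bound $\beta\ge b_\gamma/(4\nu)$ plays no role here; only the upper bound is used.
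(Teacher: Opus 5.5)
Your proposal is correct and follows essentially the same argument as the paper: certify interiority in the current-iterate NT coordinates via $\lambda_{\min}(Q)\ge\sqrt{\gamma\mu}$, bound the arc displacement using $d_\gamma(c_\gamma)\le\sqrt\gamma/4$ and the damped correction using $b_\gamma\overline B_\gamma\le\sqrt\gamma/8$, and conclude by the spectral perturbation inequality, with the uncorrected arc treated as the case $\beta=0$. Your explicit check that $d_\gamma(c_\gamma)\sqrt{2/\gamma}\le\sqrt2/4<1/2$, which makes $\zeta$ well defined and the bound \eqref{eq:fixed-rho-full-combined-bound} available, is a detail the paper leaves implicit in this proof.
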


\begin{proof}
The current-iterate NT factors of the uncorrected arc are
\(P(\alpha)=Q+\Delta P(\alpha),\qquad R(\alpha)=Q+\Delta R(\alpha).\)
We use the arc-displacement estimate
\(\norm{\Delta P(\alpha)}+\norm{\Delta R(\alpha)}\le d_\gamma(c_\gamma)\sqrt{\frac{\mu}{\nu}}\)
for $\sigma\le c_\gamma/\nu$.
For the full composite correction
${D^{\rm full}}=D_k^{\rm cor}(\alpha,\rho_0)+\zeta D_k^{\rm unit}$,
\eqref{eq:fixed-rho-full-combined-bound} and \eqref{eq:B-gamma-fixed-rho} give
\[
 \norm{{P^{\rm full}}}+\norm{{R^{\rm full}}}
 \le B_\gamma(c_\gamma)\sqrt{\nu\mu}
 \le \overline B_\gamma\sqrt{\nu\mu}.
\]
Therefore, the two NT factors of the damped corrected point
$W^+(\beta)=W_k(\alpha)+\beta {D^{\rm full}}$,
\(P^+(\beta)=P(\alpha)+\beta {P^{\rm full}},\qquad R^+(\beta)=R(\alpha)+\beta {R^{\rm full}},\)
satisfy, using $\beta\le b_\gamma/\nu$,
\begin{align*}
 \norm{P^+(\beta)-Q}
 &\le \norm{\Delta P(\alpha)}+\beta\norm{{P^{\rm full}}}\\
 &\le
 \bigl(d_\gamma(c_\gamma)+b_\gamma\overline B_\gamma\bigr)
 \sqrt{\frac{\mu}{\nu}},\\
 \norm{R^+(\beta)-Q}
 &\le \norm{\Delta R(\alpha)}+\beta\norm{{R^{\rm full}}}\\
 &\le
 \bigl(d_\gamma(c_\gamma)+b_\gamma\overline B_\gamma\bigr)
 \sqrt{\frac{\mu}{\nu}}.
\end{align*}
Since
$d_\gamma(c)\le\bar d_\gamma c$ and
$c_\gamma\le\sqrt\gamma/(4\bar d_\gamma)$, we have
\(d_\gamma(c_\gamma)\le\frac{\sqrt\gamma}{4}.\)
Also,
$b_\gamma\le\sqrt\gamma/(8\overline B_\gamma)$ gives
\(b_\gamma\overline B_\gamma\le\frac{\sqrt\gamma}{8}.\)
Using $\nu\ge1$, each factor perturbation is therefore bounded by
\[
 \bigl(d_\gamma(c_\gamma)+b_\gamma\overline B_\gamma\bigr)
 \sqrt{\frac{\mu}{\nu}}
 \le \frac{3\sqrt\gamma}{8}\sqrt{\frac{\mu}{\nu}}
 \le \frac{3}{8}\sqrt{\gamma\mu}
 <\frac12\sqrt{\gamma\mu}.
\]

Since $W\in\Nlong(\gamma)$, the current complementarity product
$H=Q\circ Q$ satisfies
$\lambda_{\min}(H)\ge\gamma\mu$. Hence, the current-iterate NT factor satisfies
\(\lambda_{\min}(Q)\ge\sqrt{\gamma\mu}.\)
Applying the spectral perturbation inequality
\(\lambda_{\min}(Q+\Delta)\ge\lambda_{\min}(Q)-\norm\Delta\)
to $P^+(\beta)=Q+(P^+(\beta)-Q)$ and
$R^+(\beta)=Q+(R^+(\beta)-Q)$ gives
\(\lambda_{\min}(P^+(\beta)),\ \lambda_{\min}(R^+(\beta))>\frac12\sqrt{\gamma\mu}>0.\)
Thus, $W^+(\beta)$ is strictly interior.
The same argument with $\beta=0$ shows that $P(\alpha)$ and $R(\alpha)$ are positive definite. Hence, the uncorrected arc is also strictly interior.
\end{proof}

Let $H^+(\beta)$ denote the complementarity product of the damped point in the current-iterate NT coordinates.
To prove neighborhood recovery, we decompose $H^+(\beta)$ into a leading centering term and higher-order error terms.
The full-corrector equation gives
\begin{equation}
 H^+(\beta)
 =(1-\beta)H_{\rm arc}(\alpha)
 +\beta(T_0(\alpha)+\zeta\bar I)
 +\beta\bigl(\Delta P(\alpha)\circ {R^{\rm full}}+{P^{\rm full}}\circ\Delta R(\alpha)\bigr)
 +\beta^2{P^{\rm full}}\circ {R^{\rm full}},
 \label{eq:damped-fixed-rho-product-expansion}
\end{equation}
where
\(T_0(\alpha)=(1-\sigma)\bigl((1-\rho_0)H+\rho_0\mu\bar I\bigr).\)
Define
$\widetilde T(\beta):=(1-\sigma)\bigl((1-\beta\rho_0)H+\beta\rho_0\mu\bar I\bigr)$.
This is the leading target after damping reduces the centering strength from $\rho_0$ to $\beta\rho_0$.
Substituting
$H_{\rm arc}(\alpha)=(1-\sigma)H+\mathcal E_{\rm arc}(\alpha)$
from Lemma~\ref{lem:wide-arc-remainder} into \eqref{eq:damped-fixed-rho-product-expansion}, we define the remaining error by
\[
 \mathcal E_{\rm damp}(\beta):=H^+(\beta)-\widetilde T(\beta).
\]
The next lemma shows that this error is smaller than the positive spectral margin of $\widetilde T(\beta)$.

\begin{lemma}[Damped neighborhood recovery with fixed $\rho_0$]
\label{lem:fixed-rho-damped-restoration}
Let $W\in\Nlong(\gamma)$, and suppose that
\[
 0<\sigma\le\frac{c_\gamma}{\nu},
 \qquad
 \rho=\rho_0=\frac18,
 \qquad
 \frac{b_\gamma}{4\nu}\le\beta\le\frac{b_\gamma}{\nu}.
\]
Then
\begin{equation*}
 \norm{\mathcal E_{\rm damp}(\beta)}
 \le
 \left(
 r_\gamma(c_\gamma)
 +3b_\gamma d_\gamma(c_\gamma)\overline B_\gamma
 +\frac14b_\gamma^2\overline B_\gamma^2
 \right)\frac\mu\nu.
\end{equation*}
Moreover,
\begin{equation}
 \left|
 \mu(W^+(\beta))-(1-\sigma)\mu
 \right|
 \le r_\gamma(c_\gamma)\frac\mu{\nu^2}.
 \label{eq:damped-mu-near-target-fixed-rho}
\end{equation}
Therefore, in the NT coordinates defined by the current iterate $W_k$,
\begin{equation*}
 \lambda_{\min}\bigl(\Phi(W^+(\beta);W_k)\bigr)\ge\gamma\mu(W^+(\beta)).
\end{equation*}
In particular, $W^+(\beta)\in\Nlong(\gamma)$.
\end{lemma}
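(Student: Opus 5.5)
We start from the expansion \eqref{eq:damped-fixed-rho-product-expansion} and substitute $H_{\rm arc}(\alpha)=(1-\sigma)H+\mathcal E_{\rm arc}(\alpha)$ from Lemma~\ref{lem:wide-arc-remainder}. Since $(1-\beta)(1-\sigma)H+\beta T_0(\alpha)=\widetilde T(\beta)$, what remains is
\[
 \mathcal E_{\rm damp}(\beta)=(1-\beta)\mathcal E_{\rm arc}(\alpha)+\beta\zeta\bar I
 +\beta\bigl(\Delta P(\alpha)\circ R^{\rm full}+P^{\rm full}\circ\Delta R(\alpha)\bigr)
 +\beta^2P^{\rm full}\circ R^{\rm full}.
\]
We bound these four terms separately.

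\textbf{Norm of $\mathcal E_{\rm damp}$.}
\begin{itemize}
\item The arc term is at most $r_\gamma(c_\gamma)\mu/\nu^{3/2}\le r_\gamma(c_\gamma)\mu/\nu$.
\item The $\zeta$ term satisfies $\beta|\zeta|\sqrt\nu\le (b_\gamma/\nu)\cdot 2d_\gamma A_\gamma(\mu/\nu)\sqrt\nu\le 2b_\gamma d_\gamma\overline B_\gamma\,\mu/\nu$, using $\nu\ge1$ and $A_\gamma\le\overline B_\gamma$.
\item For the cross term, the Jordan bound $\|U\circ V\|\le\|U\|\|V\|$ gives $\|\Delta P\circ R^{\rm full}\|+\|P^{\rm full}\circ\Delta R\|\le(\|\Delta P\|+\|\Delta R\|)(\|P^{\rm full}\|+\|R^{\rm full}\|)\le d_\gamma\sqrt{\mu/\nu}\,\overline B_\gamma\sqrt{\nu\mu}$. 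Multiplying by $\beta\le b_\gamma/\nu$ yields $b_\gamma d_\gamma\overline B_\gamma\mu/\nu$.
\item The quadratic term satisfies $\beta^2\|P^{\rm full}\|\|R^{\rm full}\|\le\beta^2\frac14(\|P^{\rm full}\|+\|R^{\rm full}\|)^2\le\frac14 b_\gamma^2\overline B_\gamma^2\mu/\nu$.
\end{itemize}
Summing the four bounds gives the stated estimate with coefficient $3b_\gamma d_\gamma\overline B_\gamma$.

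\textbf{Estimate \eqref{eq:damped-mu-near-target-fixed-rho}.} Apply \eqref{eq:stage2-mu-exact}: $\mu(W^+(\beta))-(1-\sigma)\mu=(1-\beta)\bigl(\mu(W_k(\alpha))-(1-\sigma)\mu\bigr)$. Combining this with \eqref{eq:wide-arc-mu-remainder-bound} at $c=c_\gamma$ gives the bound.

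\textbf{Neighborhood condition.} The eigenvalues of $H$ satisfy $\mu t_i$ with $t_i\ge\gamma$. Hence
\[
 \lambda_{\min}(\widetilde T)=(1-\sigma)\mu\min_i\bigl((1-\beta\rho_0)t_i+\beta\rho_0\bigr)\ge(1-\sigma)\mu\bigl(\gamma+\beta\rho_0(1-\gamma)\bigr).
\]
By Weyl's inequality, $\lambda_{\min}(H^+)\ge\lambda_{\min}(\widetilde T)-\|\mathcal E_{\rm damp}\|$. Using \eqref{eq:damped-mu-near-target-fixed-rho}, $\gamma\mu(W^+)\le\gamma(1-\sigma)\mu+\gamma r_\gamma\mu/\nu^2$. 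It therefore suffices to show
\[
 (1-\sigma)\beta\rho_0(1-\gamma)\mu\ge\|\mathcal E_{\rm damp}\|+r_\gamma\mu/\nu^2 .
\]
With $\beta\ge b_\gamma/(4\nu)$ and $1-\sigma\ge1/2$, the left side is at least $b_\gamma\rho_0(1-\gamma)\mu/(8\nu)$. So we must make each error coefficient a small fraction of $b_\gamma\rho_0(1-\gamma)/8$. The definitions of $b_\gamma$ and $c_\gamma$ are designed for exactly this:
\begin{itemize}
\item $r_\gamma(c_\gamma)\le\bar r_\gamma c_\gamma^3\le b_\gamma\rho_0(1-\gamma)/128$;
\item $3b_\gamma d_\gamma\overline B_\gamma\le3b_\gamma\bar d_\gamma c_\gamma\overline B_\gamma\le b_\gamma\rho_0(1-\gamma)/128$;
\item $\frac14 b_\gamma^2\overline B_\gamma^2\le b_\gamma\rho_0(1-\gamma)/256$.
\end{itemize}
Together with the extra $r_\gamma$ term, the total stays below $b_\gamma\rho_0(1-\gamma)/8$. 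Lemma~\ref{lem:current-frame-certificate} then gives $W^+(\beta)\in\Nlong(\gamma)$; strict interiority is supplied by Lemma~\ref{lem:fixed-rho-damped-positivity}.

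The main obstacle is this final bookkeeping. The positive margin scales like $\beta\sim1/\nu$ times $\mu$, so every error term must be shown to be $O(\mu/\nu)$ with a coefficient that is small relative to $b_\gamma$. The quadratic term is the delicate one because it is quadratic in $b_\gamma$, which is why the third entry of \eqref{eq:b-gamma-app} is needed. The arc remainder $r_\gamma$ must instead be controlled through the cubic constraint on $c_\gamma$.
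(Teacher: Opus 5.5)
Your proof is correct and follows the paper's argument essentially step for step. You use the same four-term decomposition of $\mathcal E_{\rm damp}(\beta)$, the same term-by-term bounds, and the exact interpolation for $\mu(W^+(\beta))$ combined with the arc $\mu$-remainder; you then compare the spectral margin $\frac{b_\gamma\rho_0(1-\gamma)}{8}\frac{\mu}{\nu}$ against error coefficients controlled by the definitions of $b_\gamma$ and $c_\gamma$, as the paper does. Your small deviations (bounding the cross term by a product of sums instead of Cauchy--Schwarz, and dropping the factor $\gamma$ on the $r_\gamma\mu/\nu^2$ term) are harmless, because the errors still total at most $\frac{7}{256}b_\gamma\rho_0(1-\gamma)\frac{\mu}{\nu}$.
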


\begin{proof}
The complementarity product after damping is
\[
 H^+(\beta)
 =(1-\beta)H_{\rm arc}(\alpha)
 +\beta(T_0(\alpha)+\zeta\bar I)
 +\beta\bigl(\Delta P(\alpha)\circ {R^{\rm full}}
          +{P^{\rm full}}\circ\Delta R(\alpha)\bigr)
 +\beta^2{P^{\rm full}}\circ {R^{\rm full}},
\]
where
\[
 T_0(\alpha)=(1-\sigma)\bigl((1-\rho_0)H+\rho_0\mu\bar I\bigr),
 \qquad
 H_{\rm arc}(\alpha)=(1-\sigma)H+\mathcal E_{\rm arc}(\alpha).
\]
Hence, using $\widetilde T(\beta)$ defined above, we have
$H^+(\beta)=\widetilde T(\beta)+\mathcal E_{\rm damp}(\beta)$, where
\begin{equation}
 \mathcal E_{\rm damp}(\beta)
 =(1-\beta)\mathcal E_{\rm arc}(\alpha)
 +\beta\zeta\bar I
 +\beta\bigl(\Delta P(\alpha)\circ {R^{\rm full}}
          +{P^{\rm full}}\circ\Delta R(\alpha)\bigr)
 +\beta^2{P^{\rm full}}\circ {R^{\rm full}}.
 \label{eq:damped-error-decomposition}
\end{equation}

We bound the four terms in \eqref{eq:damped-error-decomposition}. The arc remainder, arc displacement, full composite correction, and $\zeta$ satisfy
\begin{align*}
 \norm{\mathcal E_{\rm arc}(\alpha)}
 &\le r_\gamma(c_\gamma)\frac{\mu}{\nu^{3/2}},\\
 \norm{\Delta P(\alpha)}+\norm{\Delta R(\alpha)}
 &\le d_\gamma(c_\gamma)\sqrt{\frac\mu\nu},\\
 \norm{{P^{\rm full}}}+\norm{{R^{\rm full}}}
 &\le B_\gamma(c_\gamma)\sqrt{\nu\mu}
 \le\overline B_\gamma\sqrt{\nu\mu},\\
 |\zeta|
 &\le2d_\gamma(c_\gamma)A_\gamma(c_\gamma)\frac\mu\nu
 \le2d_\gamma(c_\gamma)\overline B_\gamma\frac\mu\nu.
\end{align*}
Also, $\norm{\bar I}=\sqrt\nu$ and $\beta\le b_\gamma/\nu$.
For the mixed Jordan-product term, we use
\[
 \norm{P\circ\widehat R+\widehat P\circ R}
 \le
 \sqrt{\norm{P}^2+\norm{R}^2}\,
 \sqrt{\norm{\widehat P}^2+\norm{\widehat R}^2}
\]
with
$(P,R)=(\Delta P(\alpha),\Delta R(\alpha))$ and
$(\widehat P,\widehat R)=({P^{\rm full}},{R^{\rm full}})$.
For the quadratic term, we use
\[
 \norm{{P^{\rm full}}\circ {R^{\rm full}}}
 \le\norm{{P^{\rm full}}}\norm{{R^{\rm full}}}
 \le\frac14\bigl(\norm{{P^{\rm full}}}+\norm{{R^{\rm full}}}\bigr)^2,
\]
These estimates give the following bounds for the four terms in \eqref{eq:damped-error-decomposition}:
\begin{align*}
 (1-\beta)\norm{\mathcal E_{\rm arc}(\alpha)}
 &\le r_\gamma(c_\gamma)\frac{\mu}{\nu},\\
 \beta|\zeta|\norm{\bar I}
 &\le2b_\gamma d_\gamma(c_\gamma)\overline B_\gamma\frac{\mu}{\nu},\\
 \beta\norm{\Delta P(\alpha)\circ {R^{\rm full}}
                 +{P^{\rm full}}\circ\Delta R(\alpha)}
 &\le b_\gamma d_\gamma(c_\gamma)\overline B_\gamma\frac{\mu}{\nu},\\
 \beta^2\norm{{P^{\rm full}}\circ {R^{\rm full}}}
 &\le\frac14 b_\gamma^2\overline B_\gamma^2\frac{\mu}{\nu}.
\end{align*}
Here we used $\nu^{-3/2}\le\nu^{-1}$ because $\nu\ge1$.
Adding these four inequalities gives
\[
 \norm{\mathcal E_{\rm damp}(\beta)}
 \le\left(
 r_\gamma(c_\gamma)
 +3b_\gamma d_\gamma(c_\gamma)\overline B_\gamma
 +\frac14b_\gamma^2\overline B_\gamma^2
 \right)\frac\mu\nu.
\]

Next, we estimate the average complementarity. Since the residual-invariant full correction ${D^{\rm full}}$ satisfies
$\mu(W_k(\alpha)+{D^{\rm full}})=(1-\sigma)\mu$, damping gives the exact interpolation
\(\mu(W^+(\beta)) =(1-\beta)\mu(W_k(\alpha))+\beta(1-\sigma)\mu.\)
Also,
\[
 H_{\rm arc}(\alpha)=(1-\sigma)H+\mathcal E_{\rm arc}(\alpha),
 \qquad
 \mu(W_k(\alpha))=\frac1\nu\operatorname{Tr}_{\mathcal E}(H_{\rm arc}(\alpha)),
 \qquad
 \operatorname{Tr}_{\mathcal E}(H)=\nu\mu.
\]
Therefore,
\[
 \mu(W^+(\beta))-(1-\sigma)\mu
 =\frac{1-\beta}{\nu}\operatorname{Tr}_{\mathcal E}(\mathcal E_{\rm arc}(\alpha)).
\]
Using
$\operatorname{Tr}_{\mathcal E}(Z)=\langle\bar I,Z\rangle$,
$\norm{\bar I}=\sqrt\nu$, and
$\norm{\mathcal E_{\rm arc}(\alpha)}\le r_\gamma(c_\gamma)\mu/\nu^{3/2}$, we obtain
\[
 \left|\mu(W^+(\beta))-(1-\sigma)\mu\right|
 \le
 \frac{\sqrt\nu}{\nu}
 r_\gamma(c_\gamma)\frac\mu{\nu^{3/2}}
 =r_\gamma(c_\gamma)\frac\mu{\nu^2},
\]
which proves \eqref{eq:damped-mu-near-target-fixed-rho}.

{
Finally, since $W\in\Nlong(\gamma)$ and $\lambda_{\min}(H)\ge\gamma\mu$,
\begin{align*}
 \lambda_{\min}(\widetilde T(\beta))
 &\ge (1-\sigma)\bigl((1-\beta\rho_0)\gamma\mu+\beta\rho_0\mu\bigr)\\
 &=(1-\sigma)\bigl(\gamma+\beta\rho_0(1-\gamma)\bigr)\mu.
\end{align*}
Moreover, \eqref{eq:damped-mu-near-target-fixed-rho} and $\nu\ge1$ give
\[
 \gamma\mu(W^+(\beta))
 \le \gamma(1-\sigma)\mu+\gamma r_\gamma(c_\gamma)\frac{\mu}{\nu}.
\]
Thus, by the spectral perturbation inequality, it is enough to show
\begin{equation*}
 \lambda_{\min}(\widetilde T(\beta))-\gamma(1-\sigma)\mu
 \ge \norm{\mathcal E_{\rm damp}(\beta)}
    +\gamma r_\gamma(c_\gamma)\frac{\mu}{\nu}.
\end{equation*}
For the left-hand side, $\sigma\le c_\gamma/\nu\le1/2$ and
$\beta\ge b_\gamma/(4\nu)$ imply
\[
 \lambda_{\min}(\widetilde T(\beta))-\gamma(1-\sigma)\mu
 \ge \frac{32}{256}b_\gamma\rho_0(1-\gamma)\frac{\mu}{\nu}.
\]
On the other hand, the definitions of $b_\gamma$ and $c_\gamma$, together with
$d_\gamma(c)\le\bar d_\gamma c$ and $r_\gamma(c)\le\bar r_\gamma c^3$, give
\begin{align*}
 \frac14 b_\gamma^2\overline B_\gamma^2
 &\le \frac{b_\gamma\rho_0(1-\gamma)}{256},\\
 3b_\gamma\bar d_\gamma c_\gamma\overline B_\gamma
 &\le \frac{b_\gamma\rho_0(1-\gamma)}{128},\\
 (1+\gamma)\bar r_\gamma c_\gamma^3
 &\le \frac{b_\gamma\rho_0(1-\gamma)}{64}.
\end{align*}
Hence the right-hand side above is at most
\[
 \frac7{256}b_\gamma\rho_0(1-\gamma)\frac{\mu}{\nu}.
\]
Since $7/256<32/256$, the required inequality holds. Therefore,
\[
 \lambda_{\min}(H^+(\beta))
 \ge\lambda_{\min}(\widetilde T(\beta))-\norm{\mathcal E_{\rm damp}(\beta)}
 \ge\gamma\mu(W^+(\beta)),
\]
where $H^+(\beta)=\widetilde T(\beta)+\mathcal E_{\rm damp}(\beta)=\Phi(W^+(\beta);W_k)$.
}

The preceding positivity result also gives strict interiority of $W^+(\beta)$. The estimate above is the current-iterate NT certificate
\[
 \lambda_{\min}\bigl(\Phi(W^+(\beta);W_k)\bigr)
 \ge\gamma\mu(W^+(\beta)).
\]
By Lemma~\ref{lem:current-frame-certificate}, the true complementarity spectrum of the candidate therefore satisfies the wide long-step condition. Hence
$W^+(\beta)\in\Nlong(\gamma)$.
\end{proof}

\subsection{Quantitative Scale Estimates for Projective Normalization}
\label{subsec:appendix-projective-normalization}

This subsection proves the projective scale estimates used in Subsection~\ref{subsec:normalization-solvable-case}.

By \eqref{eq:balanced-residual-scaling},
$D_k^{\rm inv}:=W_k-\omega_kW_0$ is a residual-invariant direction.
Applying Lemma~\ref{lem:ofas-exact-update-identities}(ii) to $D_k^{\rm inv}$ and using
$W_0=(I,0,I,1,1)$, $\nu=n+1$, and $\mu(W_k)=\theta_k\omega_k$, we obtain
\begin{equation}
 \operatorname{tr}X_k+\operatorname{tr}S_k+\tau_k+\kappa_k
 =\nu(\theta_k+\omega_k).
 \label{eq:cross-normalization-longstep}
\end{equation}

Let
$Z^*=(X^*,y^*,S^*)\in{\mathcal Z^*}$ be the point fixed in Subsection~\ref{subsec:normalization-solvable-case}, and let
{$\delta_{\rm rec}(Z^*)$ be defined by~\eqref{eq:recession-margin-main}.}

\begin{lemma}[Positive recession-direction margin]
\label{lem:positive-recession-margin}
Under Assumptions~\ref{ass:surjectivity}--\ref{ass:primal-dual-solvability},
$\delta_{\rm rec}(Z^*)>0$.
The boundedness part of Assumption~\ref{ass:primal-dual-solvability} excludes a nonzero direction along which a primal or dual optimal solution can be extended indefinitely while remaining optimal.
\end{lemma}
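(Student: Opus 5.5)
The plan is a compactness argument on the feasible set of \eqref{eq:recession-margin-main}. That set is
\[
 \mathcal T:=\{(D_X,d_y,D_S):D_X,D_S\succeq O,\ \operatorname{tr}D_X+\operatorname{tr}D_S=1\},
\]
and the objective is
\[
 f(D_X,d_y,D_S):=\norm{\A(D_X)}+\norm{\Ast(d_y)+D_S}+S^*\bullet D_X+X^*\bullet D_S .
\]
Every term of $f$ is nonnegative, since $X^*,S^*\succeq O$, so $\delta_{\rm rec}(Z^*)\ge0$. I will argue by contradiction and assume $\delta_{\rm rec}(Z^*)=0$.

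\textbf{Step 1: a minimizing sequence has a convergent subsequence.} Take $(D_X^j,d_y^j,D_S^j)\in\mathcal T$ with $f\to0$. The matrices $D_X^j,D_S^j$ are PSD with unit total trace, so they are bounded. Since $\norm{\Ast(d_y^j)+D_S^j}\to0$, the sequence $\Ast(d_y^j)$ is bounded. By Assumption~\ref{ass:surjectivity}, $\Ast$ is injective, hence bounded below on $\R^m$, so $\{d_y^j\}$ is bounded. Passing to a subsequence, the triples converge to some $(\bar D_X,\bar d_y,\bar D_S)\in\mathcal T$, because $\mathcal T$ is closed. By continuity of $f$ this limit satisfies
\[
 \A(\bar D_X)=0,\qquad \Ast(\bar d_y)+\bar D_S=O,\qquad S^*\bullet\bar D_X=0,\qquad X^*\bullet\bar D_S=0,
\]
and $\bar D_X,\bar D_S\succeq O$ are not both zero.

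\textbf{Step 2: the limit gives a recession direction of an optimal set.} First suppose $\bar D_X\ne O$. For every $t\ge0$ the point $X^*+t\bar D_X$ is PSD and satisfies $\A(X^*+t\bar D_X)=b$. Using $C=\Ast(y^*)+S^*$, its objective value is
\[
 C\bullet(X^*+t\bar D_X)=p^*+t\bigl(y^{*T}\A(\bar D_X)+S^*\bullet\bar D_X\bigr)=p^*.
\]
So $X^*+t\bar D_X\in\mathcal P^*$ for all $t\ge0$, which contradicts boundedness of $\mathcal P^*$.

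Otherwise $\bar D_X=O$, so $\bar D_S\ne O$. Set $(y,S)=(y^*-t\bar d_y,\,S^*+t\bar D_S)$. Then $S\succeq O$ and $\Ast(y)+S=C$ because $\Ast(\bar d_y)=-\bar D_S$. Its objective value is $b^Ty=d^*-tb^T\bar d_y$. To see that this stays optimal, use $b=\A(X^*)$:
\[
 b^T\bar d_y=X^*\bullet\Ast(\bar d_y)=-X^*\bullet\bar D_S=0.
\]
Hence $(y,S)\in\mathcal D^*$ for all $t\ge0$. Since $\bar D_S\ne O$, this set is unbounded, contradicting Assumption~\ref{ass:primal-dual-solvability}.

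Both cases are contradictions, so $\delta_{\rm rec}(Z^*)>0$. The same two computations justify the closing sentence of the statement: a zero value of the margin would produce exactly such an indefinitely extendable optimal direction.

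\textbf{Main obstacle.} The only delicate point is compactness in $d_y$, because $d_y$ is not normalized in \eqref{eq:recession-margin-main}. I handle it with injectivity of $\Ast$, which gives $\norm{\Ast(d)}\ge c\norm{d}$ for some $c>0$, together with boundedness of $D_S^j$. The remaining steps are direct applications of the optimality identities $C=\Ast(y^*)+S^*$ and $b=\A(X^*)$.
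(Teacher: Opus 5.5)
Your argument follows the paper's proof essentially step for step. You obtain a limit point from compactness of the normalized $(D_X,D_S)$ set, controlling $d_y$ through injectivity of $\Ast$; you use a minimizing sequence where the paper asserts attainment by coercivity, which amounts to the same thing. You then split into the cases $\bar D_X\ne O$ and $\bar D_X=O$ and exhibit an optimal ray in each. The primal case is correct as written.

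In the dual case, however, your ray has the wrong sign. With $y=y^*-t\bar d_y$ and $S=S^*+t\bar D_S$ you get
\[
 \Ast(y)+S=C-t\Ast(\bar d_y)+t\bar D_S=C+2t\bar D_S\ne C
\]
for $t>0$, because $\bar D_S\ne O$. So the claimed dual feasibility fails. The correct ray is $(y^*+t\bar d_y,\,S^*+t\bar D_S)$, which is the one the paper uses. For this ray $\Ast(y)+S=C+t\bigl(\Ast(\bar d_y)+\bar D_S\bigr)=C$. Your computation $b^T\bar d_y=0$ still gives $b^Ty=d^*$, so the ray lies in $\mathcal D^*$ and the contradiction goes through. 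With that sign corrected, the proof is complete.
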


\begin{proof}
The set of $(D_X,D_S)$ satisfying
$D_X,D_S\succeq O$ and
$\operatorname{tr}D_X+\operatorname{tr}D_S=1$ is compact.
Moreover, Assumption~\ref{ass:surjectivity} implies that $\Ast$ is injective.
A finite-dimensional injective linear map is bounded away from zero on the unit sphere. Since $D_S$ is bounded on the compact set above, it follows that
$\|\Ast(d_y)+D_S\|\to\infty$ as $\|d_y\|\to\infty$.
Hence the infimum in \eqref{eq:recession-margin-main} is attained.

Suppose, to the contrary, that $\delta_{\rm rec}(Z^*)=0$.
Then an attaining minimizer satisfies
$\A(D_X)=0$, $\Ast(d_y)+D_S=O$,
$S^*\bullet D_X=0$, and $X^*\bullet D_S=0$.
{
If $D_X\ne O$, dual feasibility of $(y^*,S^*)$ gives $C=\Ast(y^*)+S^*$, and therefore
\[
 C\bullet D_X
 =y^{*T}\A(D_X)+S^*\bullet D_X=0.
\]
Together with $\A(D_X)=0$ and $D_X\succeq O$, this shows that the ray $X^*+tD_X$, $t\ge0$, remains primal feasible and satisfies $C\bullet(X^*+tD_X)=C\bullet X^*$. Every point on this ray is therefore primal optimal.
If $D_X=O$, the normalization
$\operatorname{tr}D_X+\operatorname{tr}D_S=1$ implies $D_S\ne O$.
Using primal feasibility $\A(X^*)=b$ and $\Ast(d_y)+D_S=O$, we obtain
\[
 b^Td_y
 =X^*\bullet\Ast(d_y)
 =-X^*\bullet D_S=0.
\]
Consequently, $(y^*+td_y,S^*+tD_S)$ stays dual feasible for all $t\ge0$, while $b^T(y^*+td_y)=b^Ty^*$. Hence the entire ray consists of dual optimal points.
}
Either case contradicts the boundedness condition in Assumption~\ref{ass:primal-dual-solvability}.
\end{proof}

We now prove Lemma~\ref{lem:projective-scale-estimates-main}.
Set $\mathcal T_k^{\rm tr}:=\operatorname{tr}X_k+\operatorname{tr}S_k>0$.
Applying the definition of $\delta_{\rm rec}(Z^*)$ to the normalized direction
$(X_k,y_k,S_k)/\mathcal T_k^{\rm tr}$ gives
\[
 \delta_{\rm rec}(Z^*)\mathcal T_k^{\rm tr}
 \le \norm{\A(X_k)}+\norm{\Ast(y_k)+S_k}
    +S^*\bullet X_k+X^*\bullet S_k.
\]
The HSD residual relations give the following bound for the first two terms:
$(\norm{b}+\norm{C})\tau_k+2\mathcal Q_k$.
Primal-dual optimality also gives
\[
S^*\bullet X_k+X^*\bullet S_k
=C\bullet X_k-b^Ty_k-y^{*T}\Rp(W_k)+X^*\bullet\Rd(W_k).
\]
Using
$C\bullet X_k-b^Ty_k=\Rg(W_k)-\kappa_k$ and $\kappa_k\ge0$, we obtain the upper bound
$(1+\norm{y^*}+\norm{X^*})\mathcal Q_k$.
This proves \eqref{eq:projective-trace-bound-main}.

The same identity gives
$0\le\kappa_k\le(1+\norm{y^*}+\norm{X^*})\mathcal Q_k$.
By \eqref{eq:cross-normalization-longstep} and $\omega_k\ge0$,
\[
\nu\theta_k
\le \mathcal T_k^{\rm tr}+\tau_k+\kappa_k
\le\mathfrak D_*\tau_k+\mathfrak H_*\mathcal Q_k.
\]
This proves the first lower bound in \eqref{eq:projective-scale-bound-main}.
If, in addition,
$\mathfrak H_*\mathcal Q_k\le\nu\theta_k/2$, the second lower bound follows immediately.

\section{Per-Instance Numerical Results}
\label{app:detailed-numerical-results}
\setcounter{table}{0}
\renewcommand{\thetable}{\Alph{section}.\arabic{table}}
\renewcommand{\theHtable}{\Alph{section}.\arabic{table}}

This appendix reports the computation time, the number of outer iterations, and the termination status for each of the 54 problems used in Section~\ref{sec:numerical}. Each cell shows ``computation time (seconds)/number of outer iterations.'' Computation times are displayed to one decimal place, while the unrounded values are used for the aggregated results.

We use MathOptInterface (MOI) termination statuses throughout the experiments. The Clarabel.jl statuses \texttt{SOLVED} and \texttt{ALMOST\_SOLVED} correspond to \texttt{OPTIMAL} and \texttt{ALMOST\_OPTIMAL} in MOI, respectively. See \citet[Sec.~2.6]{GoulartChen2026} for the termination criteria of Clarabel. Under our experimental settings, \texttt{OPTIMAL} requires
\[
 \frac{\kappa}{\tau}\le 1,\qquad
 (\mathrm{gap}_{\rm abs}<10^{-8}\ \text{or}\ \mathrm{gap}_{\rm rel}<10^{-8}),\qquad
 r_{\rm p}<10^{-8},\qquad r_{\rm d}<10^{-8}.
\]
Here $\mathrm{gap}_{\rm abs}$ and $\mathrm{gap}_{\rm rel}$ denote Clarabel.jl's absolute and relative duality-gap measures. Its normalized residual measures for primal and dual feasibility are denoted by $r_{\rm p}$ and $r_{\rm d}$. If these full-accuracy conditions are not satisfied before termination due to the iteration limit, time limit, numerical error, or insufficient progress, the solution is checked again using the relaxed tolerances
\[
 \frac{\kappa}{\tau}\le 1,\qquad
 (\mathrm{gap}_{\rm abs}<5\times10^{-5}\ \text{or}\ \mathrm{gap}_{\rm rel}<5\times10^{-5}),\qquad
 r_{\rm p}<10^{-4},\qquad r_{\rm d}<10^{-4}.
\]
If these conditions hold, the status is \texttt{ALMOST\_OPTIMAL}. Following the numerical evaluation of Clarabel \citep{GoulartChen2026}, we classify \texttt{OPTIMAL} as Full Accuracy and \texttt{ALMOST\_OPTIMAL} as Low Accuracy. The statuses \texttt{TIME\_LIMIT} and \texttt{SLOW\_PROGRESS} correspond to \texttt{MAX\_TIME} and \texttt{INSUFFICIENT\_PROGRESS} in Clarabel.jl, respectively, when the relaxed conditions are also not satisfied.

The label \texttt{SUCCESS\_MIXED\_\allowbreak ACCURACY} is not a termination status of Clarabel.jl or MOI. For a problem--method pair with repeated runs, this label is used when the runs include both \texttt{OPTIMAL} and \texttt{ALMOST\_}\allowbreak\texttt{OPTIMAL}. The success count including Low Accuracy includes \texttt{OPTIMAL}, \texttt{ALMOST\_}\allowbreak\texttt{OPTIMAL}, and \texttt{SUCCESS\_MIXED\_}\allowbreak\texttt{ACCURACY}. The Full Accuracy success count includes only \texttt{OPTIMAL}.

For the finite safeguarded candidate search used in the numerical implementation of \OFAS{} and \CAOFAS{}, we set $\sigma_{\max}=0.95$. The angle backtracking factor was $0.5$, with at most 50 backtracking steps. We used the centering-recovery schedule $\rho\in\{1,1/2,1/4,1/8\}$ and damped the corrector by a factor of $0.5$ down to $\beta_{\min}=1/64$. Warm starts, refinement, recovery, and amplification-activation safeguards were used only in the numerical search; the candidate formulas and acceptance conditions (C1)--(C4) are unchanged.

Table~\ref{tab:factorization-ratios-app} reports linear-algebra ratios of \OFAS{} to Two-factorization for problems solved to Full Accuracy by both methods.

\begin{table}[htbp]
\centering
\caption{Linear-algebra ratios of \OFAS{} to Two-factorization for problems solved to Full Accuracy by both methods}
\label{tab:factorization-ratios-app}
\small
\setlength{\tabcolsep}{5pt}
{
\begin{tabular}{lrrrrr}
\toprule
Problem set & Matched & Factorizations & Factorization time & Linear solves & Solve time \\
\midrule
SDPLIB & 11 & 0.524 & 0.523 & 0.873 & 0.894 \\
NNV-SDP & 26 & 0.601 & 0.606 & 1.001 & 1.053 \\
\bottomrule
\end{tabular}
}

\vspace{2pt}
\parbox{0.97\textwidth}{\footnotesize Each entry is the geometric mean of the \OFAS{}/Two-factorization ratio over problems solved to Full Accuracy by both methods. Ratios below one favor \OFAS{}.}
\end{table}

A cell without a superscript denotes \texttt{OPTIMAL}. We use the superscripts $\mathrm{A}$ for \texttt{ALMOST\_\allowbreak OPTIMAL}, $\mathrm{M}$ for \texttt{SUCCESS\_MIXED\_\allowbreak ACCURACY}, $\mathrm{TL}$ for \texttt{TIME\_\allowbreak LIMIT}, and $\mathrm{SP}$ for \texttt{SLOW\_\allowbreak PROGRESS}. For \texttt{TIME\_\allowbreak LIMIT}, the displayed time may slightly exceed 3600 seconds because an ongoing linear-algebra operation is not interrupted.

Table~\ref{tab:sdplib-per-instance} reports the results for the 18 SDPLIB problems, and Table~\ref{tab:nnvsdp-per-instance} reports those for the 36 NNV-SDP problems. In the NNV-SDP problem names, \texttt{ellipsoid} and \texttt{rectangle} denote the input sets considered in Sections~5.2 and 5.3 of Azuma--Kim--Yamashita \citep{AzumaKimYamashita2026}, respectively. The labels \texttt{dense} and \texttt{identity} describe the structure of the first-layer weight matrix $W^0$, where \texttt{identity} means $W^0=I$. This corresponds to Assumption~5.7 of that paper for rectangular inputs. The label \texttt{nXX} denotes the network width $r=n_0=n_1$, and \texttt{s701}, \texttt{s702}, and \texttt{s703} denote random seeds 701, 702, and 703, respectively. Thus, three seeds are used for each problem family and network width. In Table~\ref{tab:nnvsdp-per-instance}, the problems are ordered by increasing $r$. For the same $r$, they are ordered by problem family, weight-matrix structure, and seed. In both tables, the shortest computation time and the smallest number of outer iterations among the successful methods are shown in bold for each problem.

\begin{table}[htbp]
\centering
\caption{Per-instance results for the 18 SDPLIB problems}
\label{tab:sdplib-per-instance}
\scriptsize
\setlength{\tabcolsep}{2.0pt}
\renewcommand{\arraystretch}{1.08}
\resizebox{\textwidth}{!}{%
\begin{tabular}{lrrrrrr}
\toprule
Problem & \OFAS{} & \CAOFAS{} & \shortstack{Two-\\factorization} & \shortstack{Corrector-\\free} & Mehrotra & Liu-type \\
\midrule
\texttt{control4} & $4.7/27$ & $\mathbf{4.5}/\mathbf{25}$ & $8.7/27$ & $9.2/59$ & $5.0/32$ & $6.0/38$ \\
\texttt{control5} & $\mathbf{18.3}/\mathbf{32}$ & $20.3/35$ & $39.6/37$ & $52.0/100$ & $19.3/36$ & $23.0/43$ \\
\texttt{control6} & $59.1/34$ & $\mathbf{55.2}/\mathbf{31}$ & $109.6/33$ & $125.8/77$ & $63.6/38$ & $70.6/42$ \\
\texttt{gpp100} & $854.4/26^{\mathrm{M}}$ & $\mathbf{634.3}/\mathbf{19}$ & $1296.1/21^{\mathrm{M}}$ & $3621.4/117^{\mathrm{M}}$ & $708.7/21^{\mathrm{M}}$ & $708.9/22^{\mathrm{M}}$ \\
\texttt{gpp124-1} & $3496.0/30^{\mathrm{M}}$ & $\mathbf{2602.9}/22$ & $3701.3/\mathbf{16}^{\mathrm{M}}$ & $3658.8/31^{\mathrm{M}}$ & $3435.8/29$ & $3546.7/30$ \\
\texttt{gpp124-2} & $\mathbf{2595.8}/22^{\mathrm{M}}$ & $2823.3/24$ & $3703.0/\mathbf{16}^{\mathrm{M}}$ & $3702.1/32^{\mathrm{M}}$ & $3361.3/29$ & $3134.5/27$ \\
\texttt{gpp124-3} & $3452.4/29$ & $\mathbf{2486.2}/21$ & $3699.1/\mathbf{16}^{\mathrm{M}}$ & $3654.1/31^{\mathrm{M}}$ & $3317.6/28$ & $3634.8/31$ \\
\texttt{mcp100} & $325.9/9$ & $313.8/9$ & $548.0/\mathbf{8}$ & $\mathbf{310.6}/9$ & $387.4/11$ & $371.8/11$ \\
\texttt{mcp124-1} & $1125.0/\mathbf{9}$ & $\mathbf{1115.9}/\mathbf{9}$ & $2102.9/\mathbf{9}$ & $1330.2/11$ & $1552.0/13$ & $1669.2/14$ \\
\texttt{mcp124-2} & $\mathbf{1114.1}/9$ & $1115.3/9$ & $1887.0/\mathbf{8}$ & $1246.5/10$ & $1442.9/12$ & $1330.1/11$ \\
\texttt{mcp124-3} & $\mathbf{1116.4}/9$ & $1137.4/9$ & $1938.4/\mathbf{8}$ & $1236.5/10$ & $1609.0/13$ & $1492.0/12$ \\
\texttt{mcp124-4} & $1134.9/9$ & $\mathbf{1110.9}/9$ & $1917.6/\mathbf{8}$ & $1241.1/10$ & $1437.3/12$ & $1356.4/11$ \\
\texttt{qap6} & $2.6/30^{\mathrm{M}}$ & $\mathbf{1.9}/\mathbf{21}$ & $5.3/34^{\mathrm{M}}$ & $4.3/56^{\mathrm{M}}$ & $2.3/30^{\mathrm{M}}$ & $13.4/180^{\mathrm{M}}$ \\
\texttt{qap7} & $\mathbf{12.2}/29^{\mathrm{M}}$ & $12.9/30^{\mathrm{M}}$ & $18.6/\mathbf{24}^{\mathrm{M}}$ & $39.9/105^{\mathrm{M}}$ & $14.4/37^{\mathrm{M}}$ & $193.6/515^{\mathrm{M}}$ \\
\texttt{theta1} & $\mathbf{4.5}/\mathbf{10}$ & $\mathbf{4.5}/\mathbf{10}$ & $8.2/\mathbf{10}$ & $5.1/12$ & $5.0/12$ & $5.8/14$ \\
\texttt{theta2} & $382.5/11$ & $366.1/11$ & $661.9/\mathbf{10}$ & $410.8/12$ & $\mathbf{349.4}/\mathbf{10}$ & $393.8/12$ \\
\texttt{truss5} & $4.6/17$ & $4.5/\mathbf{16}^{\mathrm{M}}$ & $7.8/\mathbf{16}$ & $34.2/147$ & $\mathbf{4.4}/18$ & $13.0/56$ \\
\texttt{truss8} & $417.4/20^{\mathrm{M}}$ & $\mathbf{336.6}/\mathbf{16}$ & $679.2/18^{\mathrm{M}}$ & $3407.3/173$ & $413.2/20$ & $396.3/20$ \\
\bottomrule
\end{tabular}%
}
\end{table}
\clearpage

\begin{table}[p]
\centering
\caption{Per-instance results for the 36 NNV-SDP problems (in ascending order of network width $r$)}
\label{tab:nnvsdp-per-instance}
\scriptsize
\setlength{\tabcolsep}{2.0pt}
\renewcommand{\arraystretch}{1.03}
\resizebox{\textwidth}{!}{%
\begin{tabular}{lrrrrrr}
\toprule
Problem & \OFAS{} & \CAOFAS{} & \shortstack{Two-\\factorization} & \shortstack{Corrector-\\free} & Mehrotra & Liu-type \\
\midrule
\texttt{rectangle\_dense\_n32\_s701} & $82.0/37$ & $\mathbf{66.3}/\mathbf{30}$ & $131.4/32$ & $136.0/68$ & $70.6/34$ & $70.6/34$ \\
\texttt{rectangle\_dense\_n32\_s702} & $81.2/37$ & $\mathbf{62.8}/\mathbf{29}$ & $126.7/31$ & $150.0/74$ & $69.7/34$ & $68.8/33$ \\
\texttt{rectangle\_dense\_n32\_s703} & $77.9/36$ & $\mathbf{60.2}/\mathbf{28}$ & $118.4/29$ & $132.5/67$ & $67.1/33$ & $64.2/31$ \\
\addlinespace[2pt]
\texttt{ellipsoid\_dense\_n40\_s701} & $403.1/46$ & $\mathbf{283.0}/\mathbf{31}$ & $666.1/39$ & $774.7/92$ & $350.4/39$ & $337.6/39$ \\
\texttt{ellipsoid\_dense\_n40\_s702} & $391.9/44$ & $\mathbf{293.0}/\mathbf{33}$ & $597.2/35$ & $872.5/104$ & $330.1/38$ & $343.0/40$ \\
\texttt{ellipsoid\_dense\_n40\_s703} & $385.0/44$ & $\mathbf{284.4}/\mathbf{32}$ & $628.3/36$ & $991.0/116$ & $341.0/39$ & $318.8/37$ \\
\texttt{rectangle\_dense\_n40\_s701} & $463.8/51$ & $\mathbf{329.5}/\mathbf{37}$ & $712.6/42$ & $825.7/98$ & $158.9/18^{\mathrm{SP}}$ & $378.5/43$ \\
\texttt{rectangle\_dense\_n40\_s702} & $440.7/48$ & $\mathbf{315.4}/\mathbf{35}$ & $754.7/43$ & $773.0/91$ & $380.7/44$ & $356.2/41$ \\
\texttt{rectangle\_dense\_n40\_s703} & $466.8/51$ & $\mathbf{332.4}/\mathbf{37}$ & $753.8/43$ & $957.0/113$ & $378.1/43$ & $361.6/41$ \\
\texttt{rectangle\_identity\_n40\_s701} & $165.6/18$ & $\mathbf{162.9}/18$ & $268.3/\mathbf{15}$ & $217.7/25$ & $176.4/20$ & $173.5/19$ \\
\texttt{rectangle\_identity\_n40\_s702} & $170.3/18$ & $\mathbf{165.0}/18$ & $264.3/\mathbf{15}$ & $214.2/24$ & $197.4/22$ & $181.2/20$ \\
\texttt{rectangle\_identity\_n40\_s703} & $\mathbf{147.9}/16$ & $\mathbf{147.9}/16$ & $247.2/\mathbf{14}$ & $212.0/24$ & $178.6/20$ & $182.1/20$ \\
\addlinespace[2pt]
\texttt{rectangle\_dense\_n44\_s701} & $690.3/43$ & $\mathbf{550.4}/\mathbf{34}$ & $1160.8/37$ & $1459.5/94$ & $618.7/39$ & $586.5/37$ \\
\texttt{rectangle\_dense\_n44\_s702} & $398.5/24^{\mathrm{SP}}$ & $203.6/12^{\mathrm{SP}}$ & $438.0/14^{\mathrm{SP}}$ & $557.2/35^{\mathrm{SP}}$ & $168.9/10^{\mathrm{SP}}$ & $3205.9/207^{\mathrm{SP}}$ \\
\texttt{rectangle\_dense\_n44\_s703} & $760.2/48$ & $\mathbf{573.3}/\mathbf{36}$ & $1209.4/39$ & $1369.1/89$ & $627.2/40$ & $609.9/39$ \\
\addlinespace[2pt]
\texttt{ellipsoid\_dense\_n48\_s701} & $1053.2/41$ & $\mathbf{772.9}/\mathbf{29}$ & $1655.7/32$ & $2660.2/106$ & $901.9/35$ & $864.6/34$ \\
\texttt{ellipsoid\_dense\_n48\_s702} & $1454.7/54$ & $\mathbf{875.9}/\mathbf{32}^{\mathrm{A}}$ & $2371.3/46$ & $3329.3/130$ & $1196.1/46$ & $1145.4/44$ \\
\texttt{ellipsoid\_dense\_n48\_s703} & $1077.2/39^{\mathrm{A}}$ & $\mathbf{923.6}/\mathbf{33}$ & $1998.9/38$ & $2545.6/99$ & $995.8/38$ & $1046.5/40$ \\
\texttt{rectangle\_dense\_n48\_s701} & $1222.6/46$ & $\mathbf{842.6}/\mathbf{32}$ & $2029.6/39$ & $2495.3/97$ & $1098.9/41$ & $996.1/37$ \\
\texttt{rectangle\_dense\_n48\_s702} & $1519.0/56$ & $\mathbf{1061.3}/\mathbf{39}$ & $2377.7/45$ & $3001.0/117$ & $1227.6/47$ & $1149.5/43$ \\
\texttt{rectangle\_dense\_n48\_s703} & $1217.9/44$ & $\mathbf{844.2}/\mathbf{30}$ & $2003.8/38$ & $2359.7/92$ & $988.8/36$ & $962.0/35$ \\
\texttt{rectangle\_identity\_n48\_s701} & $534.7/20$ & $\mathbf{484.7}/18$ & $821.7/\mathbf{16}$ & $702.9/27$ & $602.4/22$ & $602.7/22$ \\
\texttt{rectangle\_identity\_n48\_s702} & $\mathbf{517.3}/19$ & $530.8/19$ & $844.8/\mathbf{16}$ & $714.4/27$ & $614.4/23$ & $627.9/23$ \\
\texttt{rectangle\_identity\_n48\_s703} & $\mathbf{519.2}/19$ & $521.7/19$ & $830.1/\mathbf{15}$ & $740.7/28$ & $537.8/20$ & $561.1/20$ \\
\addlinespace[2pt]
\texttt{ellipsoid\_dense\_n52\_s701} & $2267.5/54$ & $\mathbf{1686.2}/\mathbf{39}$ & $3643.2/44^{\mathrm{A}}$ & $3608.9/86^{\mathrm{TL}}$ & $2043.7/49$ & $2006.5/48$ \\
\texttt{ellipsoid\_dense\_n52\_s702} & $2099.2/49^{\mathrm{A}}$ & $1482.2/34^{\mathrm{SP}}$ & $3621.8/\mathbf{43}^{\mathrm{A}}$ & $3628.9/88^{\mathrm{TL}}$ & $2002.0/48$ & $\mathbf{1922.9}/45$ \\
\texttt{ellipsoid\_dense\_n52\_s703} & $2018.7/48$ & $\mathbf{1531.1}/\mathbf{36}$ & $3621.6/43$ & $3642.2/89^{\mathrm{TL}}$ & $1885.4/45$ & $1755.5/41$ \\
\addlinespace[2pt]
\texttt{ellipsoid\_dense\_n56\_s701} & $2627.9/39$ & $\mathbf{1975.0}/\mathbf{29}$ & $3651.5/28^{\mathrm{TL}}$ & $3638.1/56^{\mathrm{TL}}$ & $2041.8/31$ & $2048.5/31$ \\
\texttt{ellipsoid\_dense\_n56\_s702} & $3630.4/54^{\mathrm{TL}}$ & $\mathbf{3245.4}/\mathbf{48}$ & $3665.4/28^{\mathrm{TL}}$ & $3660.8/55^{\mathrm{TL}}$ & $3609.5/54^{\mathrm{A}}$ & $3446.7/53$ \\
\texttt{ellipsoid\_dense\_n56\_s703} & $3543.4/54$ & $\mathbf{2436.0}/\mathbf{36}$ & $3614.1/27^{\mathrm{TL}}$ & $3650.8/56^{\mathrm{TL}}$ & $2992.8/45$ & $2818.2/43$ \\
\texttt{rectangle\_identity\_n56\_s701} & $1411.9/21$ & $\mathbf{1316.2}/19$ & $2234.2/\mathbf{17}$ & $1885.8/28$ & $1469.3/22$ & $1565.4/23$ \\
\texttt{rectangle\_identity\_n56\_s702} & $1511.0/22$ & $\mathbf{1383.3}/20$ & $2229.3/\mathbf{17}$ & $2146.7/32$ & $1534.3/23$ & $1628.3/24$ \\
\texttt{rectangle\_identity\_n56\_s703} & $1362.3/20$ & $\mathbf{1271.8}/19$ & $2206.5/\mathbf{17}$ & $1864.3/28$ & $1546.6/23$ & $1546.1/23$ \\
\addlinespace[2pt]
\texttt{rectangle\_identity\_n64\_s701} & $3296.7/22$ & $\mathbf{3101.7}/\mathbf{21}$ & $3850.8/13^{\mathrm{TL}}$ & $3702.4/25^{\mathrm{A}}$ & $3279.2/22$ & $3420.5/23$ \\
\texttt{rectangle\_identity\_n64\_s702} & $3248.5/22$ & $\mathbf{2832.7}/\mathbf{19}$ & $3796.9/13^{\mathrm{TL}}$ & $3644.2/25^{\mathrm{TL}}$ & $3444.3/23$ & $3640.3/25$ \\
\texttt{rectangle\_identity\_n64\_s703} & $3312.1/22$ & $\mathbf{2968.3}/\mathbf{20}$ & $3864.0/13^{\mathrm{TL}}$ & $3714.8/25^{\mathrm{TL}}$ & $3239.8/22$ & $3360.6/23$ \\
\bottomrule
\end{tabular}%
}
\end{table}
\clearpage

\section*{Acknowledgements}
ChatGPT 5.6 was used for language editing and editorial assistance.
The authors reviewed the resulting manuscript and remain responsible for its content.

\section*{Statements and Declarations}

\paragraph{Funding.}
The research of Makoto Yamashita was partially supported by Japan Society for the Promotion of Science (JSPS) KAKENHI Grant Numbers 24K14836 and 26K02868.

\paragraph{Competing interests.}
The authors declare no competing interests related to this work.

\paragraph{Data availability.}
The scripts in the public repository \href{https://github.com/makoto-yamashita/ClarabelHSDSearch.jl}{ClarabelHSDSearch.jl} can be used to obtain or generate the data for the numerical experiments.

\paragraph{Code availability.}
Implementations of the proposed methods and the numerical-experiment code are available in the public repository \href{https://github.com/makoto-yamashita/ClarabelHSDSearch.jl}{ClarabelHSDSearch.jl}.

\bibliographystyle{unsrtnat}
\bibliography{references}

\end{document}